\theoremstyle{plain}
\newtheorem{theorem}[equation]{Theorem}
\newtheorem{lemma}[equation]{Lemma}
\newtheorem{proposition}[equation]{Proposition}
\theoremstyle{definition}
\newtheorem{definition}[equation]{Definition}
\theoremstyle{remark}
\newtheorem{remark}[equation]{Remark}
\numberwithin{equation}{section}
\newcommand{\vertiii}[1]{{\left\vert\kern-0.25ex\left\vert\kern-0.25ex\left\vert #1 
		\right\vert\kern-0.25ex\right\vert\kern-0.25ex\right\vert}}
\newcommand{\ZZ}{{\mathbb{Z}}}
\newcommand{\dist}{\operatorname{dist}}
\newcommand{\re}{\mathbb{R}}
\newcommand{\ree}{\mathbb{R}^{n+1}}
\newcommand{\N}{\mathbb{N}}
\newcommand{\dd}{\mathbb{D}}
\newcommand{\F}{\mathcal{F}}
\newcommand{\W}{\mathcal{W}}
\newcommand{\pom}{\partial\Omega}
\renewcommand{\emptyset}{\mbox{\textup{\O}}}
\newcommand{\tinyemptyset}{\mbox{\tiny \textup{\O}}}
\DeclareMathOperator{\supp}{supp}
\DeclareMathOperator{\diam}{diam}
\DeclareMathOperator{\interior}{int}
\DeclareMathOperator*{\Lip}{Lip}
\def\div{\mathop{\operatorname{div}}\nolimits}
\def\Cap{\mathop{\operatorname{Cap}_2}\nolimits}
\def\Xint#1{\mathchoice
	{\XXint\displaystyle\textstyle{#1}}%
	{\XXint\textstyle\scriptstyle{#1}}%
	{\XXint\scriptstyle\scriptscriptstyle{#1}}%
	{\XXint\scriptscriptstyle\scriptscriptstyle{#1}}%
	\!\int}
\def\XXint#1#2#3{{\setbox0=\hbox{$#1{#2#3}{\int}$}
		\vcenter{\hbox{$#2#3$}}\kern-.5\wd0}}
\def\aver#1{\Xint-_{#1}}
\renewcommand*{\backref}[1]{}
\renewcommand*{\backrefalt}[4]{%
 \ifcase #1 (Not cited.)%
   \or        (Cited on page~#2.)%
    \else      (Cited on pages~#2.)%
    \fi}
\begin{document} 

	\allowdisplaybreaks[3]

\title[Elliptic operators on rough domains]{Square function and non-tangential maximal function estimates for elliptic operators in 1-sided NTA domains satisfying the capacity density condition}

\author[M. Akman]{Murat Akman}
\address{{Murat Akman}\\
Department of Mathematical Sciences
\\
University of Essex
\\
Colchester CO4 3SQ, United Kingdom}	\email{murat.akman@essex.ac.uk}

\author[S. Hofmann]{Steve Hofmann}

\address{{Steve Hofmann}\\
Department of Mathematics \\University of Missouri \\ Columbia, MO 65211, USA} \email{hofmanns@missouri.edu}

\author[J. M. Martell]{José María Martell}
\address{{José María Martell} \\
Instituto de Ciencias Matemáticas CSIC-UAM-UC3M-UCM \\ 
Consejo Superior de Investigaciones Científicas \\ C/ Nico\-lás Cabrera, 13-15 \\
E-28049 Madrid, Spain} \email{chema.martell@icmat.es}

\author[T. Toro]{Tatiana Toro}

\address{{Tatiana Toro}\\
University of Washington \\
Department of Mathematics \\
Box 354350\\
Seattle, WA 98195-4350} \email{toro@uw.edu}

\thanks{The second author was partially supported by NSF grants DMS-1664047
and DMS-2000048. 
The third author acknowledges financial support from the Spanish Ministry of Science and Innovation, through the ``Severo Ochoa Programme for Centres of Excellence in R\&D'' (CEX2019-000904-S) and through the grant MTM PID2019-107914GB-I00. The third author also acknowledges that the research leading to these results has received funding from the European Research Council under the European Union's Seventh Framework Programme (FP7/2007-2013)/ ERC agreement no. 615112 HAPDEGMT. The fourth author was partially supported by the Craig McKibben \& Sarah Merner Professor in Mathematics, by NSF grant number DMS-1664867 and DMS-1954545, and by the Simons Foundation Fellowship 614610.}

\thanks{This material is based upon work supported by the National Science Foundation under Grant No. DMS-1440140 while the authors were in residence at the Mathematical Sciences Research Institute in Berkeley, California, during the Spring 2017 semester.}

\date{\today}

\subjclass[2010]{31B05, 35J08, 35J25, 42B37, 42B25, 42B99}

\keywords{Uniformly elliptic operators, elliptic measure, the Green function, 1-sided non-tangentially accessible domains, 1-sided chord-arc domains,  capacity density condition, Ahlfors-regularity, $A_\infty$ Muckenhoupt weights, Reverse Hölder, Carleson measures, square function estimates, non-tangential maximal function estimates, dyadic analysis, sawtooth domains, perturbation}
\begin{abstract}
Let $\Omega\subset\mathbb{R}^{n+1}$, $n\ge 2$, be a 1-sided non-tangentially accessible domain (aka uniform domain),  that is, $\Omega$ satisfies the interior Corkscrew and Harnack chain conditions, which are respectively scale-invariant/quantitative versions of openness and path-connectedness. Let us assume also that $\Omega$ satisfies the so-called capacity density condition, a quantitative version of the fact that all boundary points are Wiener regular. Consider $L_0 u=-\mathrm{div}(A_0\nabla u)$, $Lu=-\mathrm{div}(A\nabla u)$,  two real (non-necessarily symmetric) uniformly elliptic operators in $\Omega$, and write $\omega_{L_0}$, $\omega_L$ for the respective associated elliptic measures. The goal of this program is to find sufficient conditions guaranteeing that $\omega_L$ satisfies an $A_\infty$-condition or a $RH_q$-condition with respect to $\omega_{L_0}$. In this  paper we are interested in obtaining square function and non-tangential estimates for solutions of operators as before.  We establish that bounded weak null-solutions satisfy Carleson measure estimates, with respect to the associated elliptic measure. We also show that for every weak null-solution, the associated square function can be controlled by the non-tangential maximal function in any Lebesgue space with respect to the associated elliptic measure. These results extend previous work of Dahlberg-Jerison-Kenig and are fundamental for the proof of the perturbation results in \cite{AHMT-II}.
\end{abstract}

\maketitle
\setcounter{tocdepth}{3}
\tableofcontents


\section{Introduction and Main results}

The purpose of this program is to study some perturbation problems for second order divergence form real elliptic operators with bounded measurable coefficients in domains with rough boundaries. Let $\Omega\subset\mathbb{R}^{n+1}$, $n\geq 2$, be an open set and let $Lu=-\div(A\nabla u)$ be a second order divergence form real  elliptic operator 
defined in $\Omega$. Here the coefficient matrix $A=(a_{i,j}(\cdot))_{i,j=1}^{n+1}$ is real (not
necessarily symmetric) and uniformly elliptic, with $a_{i,j}\in L^\infty(\Omega)$,
that is, there exists a constant $\Lambda\geq 1$ such that 
\begin{align}
\label{uniformlyelliptic}
\Lambda^{-1} |\xi|^{2} \leq A(X) \xi \cdot \xi,
\qquad\qquad
|A(X) \xi \cdot\eta|\leq \Lambda |\xi|\,|\eta| 
\end{align}
for all $\xi,\eta \in\mathbb{R}^{n+1}$ and for almost every $X\in\Omega$. Associated with $L$ one can construct a family of positive Borel measures  $\{\omega_{L}^{X}\}_{X\in\Omega}$, defined on $\partial\Omega$ with $\omega^X(\pom)\le 1$ for every $X\in\Omega$, so that for each $f\in C_c(\pom)$ one can define its associated weak-solution
\begin{equation}\label{hm-sols}
u(X)=\int_{\partial\Omega} f(z)d\omega_{L}^{X}(z),\quad \mbox{whenever}\, \, X\in\Omega,
\end{equation}
which satisfies $Lu=0$ in $\Omega$ in the weak sense. 
 In principle, unless we assume some further condition, $u$ need not 
be continuous all the way to the boundary, 
but still we think of $u$ as the solution to the continuous Dirichlet problem with boundary data $f$. 
We call $\omega_{L}^{X}$ the elliptic measure of $\Omega$ associated with the operator $L$ with pole at $X\in\Omega$.  For convenience,  we will sometimes write $\omega_{L}$ and call it simply the elliptic measure, dropping the dependence on the  pole. 

Given two such operators $L_0u=-\div(A_0 \nabla u)$ and $Lu=-\div(A \nabla u)$, one 
may wonder whether one can find conditions on the matrices $A_0$ and $A$ so that  
some ``good estimates'' for the Dirichlet problem or for the elliptic measure for $L_0$ might be 
transferred to the operator $L$. Similarly, one may 
try to see whether $A$ being ``close'' to $A_0$ in some sense gives some relationship between $\omega_{L}$ and $\omega_{L_0}$. In this direction, a celebrated result of Littman, Stampacchia, 
and Weinberger in \cite{LSW} states that the continuous Dirichlet problem for the Laplace operator $L_0=\Delta$, (i.e., $A_0$ is the identity) is solvable 
if and only if it is solvable for any real elliptic operator $L$. By solvability here we mean that the elliptic measure solutions as in \eqref{hm-sols} are indeed continuous in $\overline{\Omega}$. It is well known that solvability in this sense is in fact equivalent to the fact that all boundary points are regular in the sense of Wiener, a 
condition which entails some capacitary thickness of the complement of $\Omega$. Note that, for this result, one does not need to know that $L$ is ``close'' to the Laplacian in any sense (other than the fact that both operators are uniformly elliptic).

On the other hand, if $\Omega=\mathbb{R}^2_+$ is the upper-half plane and $L_0=\Delta$, 
then the harmonic measure associated with $\Delta$ 
 is mutually absolutely continuous with respect to the 
surface measure on the boundary, and its Radon-Nykodym derivative
is the classical Poisson kernel. 
However, Caffarelli, Fabes, and Kenig in \cite{CFK} constructed a uniformly real  
elliptic operator $L$ in the plane (the pullback of the Laplacian via a quasiconformal mapping of the upper half plane to itself) for which the associated elliptic measure $\omega_L$ is not even 
absolutely continuous with respect to the surface measure (see also \cite{MM} for another example). Hence, in principle the ``good behavior'' of harmonic measure does not always transfer to any elliptic measure even in a nice domain such as the upper-half plane. Consequently, it is natural to see if those good properties can be transferred by assuming some conditions reflecting the fact that  $L$ is ``close'' to $L_0$ or, in other words, by imposing some conditions on the disagreement of $A$ and $A_0$.

The goal of this program is to solve some perturbation problems that 
go beyond \cite{D, F, FKP, MPT, CHM,CHMT}. Our setting is that of 
1-sided NTA domains satisfying the so called capacity density condition (CDC for short), see 
Section \ref{section:prelim} for the precise definitions. The latter is a quantitative version of the well-known Wiener criterion and it is weaker than the Ahlfors regularity of the boundary or the existence of exterior 
Corkscrews (see Definition \ref{def1.cork}). 
This setting guarantees among other things that any elliptic measure is doubling in some appropriate sense, hence one can see that a suitable portion of the boundary of the domain endowed with the Euclidean distance and with a given elliptic measure $\omega_{L_0}$ is a space of homogeneous type. In particular, classes like $A_\infty(\omega_{L_0})$ or $RH_p(\omega_{L_0})$ have the same good features of the corresponding ones in the Euclidean setting. However, our assumptions do not guarantee that the surface measure has any 
good behavior and it
could even be locally infinite. 
In one of our main results, we
consider the case in which a certain disagreement 
condition, originating in \cite{FKP}, holds either with small or large  constant. 
The small constant case can be seen as an extension of \cite{FKP, MPT} to a setting in 
which surface measure is not a good object. The large constant case is 
new even in nice domains such as balls, upper-half spaces, Lipschitz 
domains or chord-arc domains. To the best of our knowledge,  our work is
the first to establish perturbation results on sets with bad 
surface measures, and our large perturbation results are the first of their type. 
Finally, we do not require the operators to be symmetric.  
The precise results, along with its context in the historical developments, 
will be stated in the sequel to the present paper \cite{AHMT-II}.

In the present article we develop some of the needed tools, and present
some other results which are of independent interest. 
Key to our argument is the construction of certain 
sawtooth domains adapted to
a dyadic grid on the boundary and to the Whitney decomposition of the domain. These domains are shown to inherit the main geometrical/topological features of the original domain (see Proposition~\ref{prop:CDC-inherit}). With this in hand we obtain a discrete sawtooth lemma for projections improving \cite[Main Lemma]{DJK}, see Lemma \ref{lemma:DJK-sawtooth} and Lemma \ref{lemma:DJK-sawtooth:new}. These ingredients are crucial for the main results of the papers which we state next.  First we establish that bounded weak-solutions satisfy Carleson measure estimates adapted to the elliptic measure. 

\begin{theorem}\label{THEOR:CME}
Let $\Omega\subset\mathbb{R}^{n+1}$, $n\ge 2$, be a 1-sided NTA domain  \textup{(}cf. Definition \ref{def1.1nta}\textup{)} satisfying the capacity density condition \textup{(}cf. Definition \ref{def-CDC}\textup{)}.  Let $Lu=-\div(A\nabla u)$ be a 
real \textup{(}not necessarily symmetric\textup{)} elliptic operator and write $\omega_L$ and $G_L$ to denote, respectively, the associated elliptic measure and the Green function.  There exists $C$ depending only on dimension $n$, the 1-sided NTA constants, the CDC constant, and the ellipticity constant of $L$, such that for every $u\in W^{1,2}_{\rm loc} (\Omega)\cap L^\infty(\Omega)$ with $Lu=0$ in the weak-sense in $\Omega$ there holds
	\begin{equation}\label{eq:CME}
		\sup_B\sup_{B'} \frac1{\omega_L^{X_{\Delta}}(\Delta')}\iint_{B'\cap\Omega} |\nabla u(X)|^2\, G_L(X_\Delta,X)\,dX
		\le
		\,C\,\|u\|_{L^\infty(\Omega)}^2,
	\end{equation}
	where $\Delta=B\cap\pom$, $\Delta'=B'\cap\pom$, $X_\Delta$ is a corkscrew point relative to $\Delta$ \textup{(}cf. Definition \ref{def1.cork}\textup{)}, and the sups are taken respectively over all balls $B=B(x,r)$ with $x\in \pom$ and $0<r<\diam(\pom)$,
	and $B'=B(x',r')$ with $x'\in2\Delta$ and $0<r'<r c_0/4$, and $c_0$ is the Corkscrew constant.
\end{theorem}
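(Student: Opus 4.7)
My plan is to prove the estimate via a test-function / integration-by-parts identity that simultaneously exploits $Lu=0$ and the defining property of the Green function with pole $X_\Delta$. Fix $B=B(x,r)$ and $B'=B(x',r')$ as in the statement, let $X_\Delta$ be a corkscrew for $\Delta=B\cap\pom$, write $G:=G_L(X_\Delta,\cdot)$, and choose a smooth cutoff $\psi$ with $\psi\equiv 1$ on $B'$, $\supp\psi\subset 2B'$, and $|\nabla\psi|\lesssim 1/r'$. The hypothesis $r'<rc_0/4$ together with $x'\in 2\Delta\subset\pom$ and $\dist(X_\Delta,\pom)\gtrsim r$ forces $X_\Delta\notin 2B'$, so $\psi(X_\Delta)=0$ and $G$ is a genuine weak solution of the relevant operator on $\supp\psi$.

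Testing $Lu=0$ against $u\psi^2 G$ and the Green function identity against $u^2\psi^2/2$ (both admissible because $\psi(X_\Delta)=0$), expansion and cancellation of the common cross term $\iint u\psi^2\,A\nabla u\cdot\nabla G\,dX$ yield
\[
\iint_\Omega (A\nabla u\cdot\nabla u)\,\psi^2 G\,dX \;=\; -2\iint_\Omega u\psi G\,A\nabla u\cdot\nabla\psi\,dX \;+\; \iint_\Omega u^2\psi\,A\nabla\psi\cdot\nabla G\,dX.
\]
Ellipticity bounds the LHS below by $\Lambda^{-1}\iint|\nabla u|^2\psi^2 G\,dX$; on the RHS, $\nabla\psi$ is supported in the annulus $\mathcal{A}:=(2B'\setminus B')\cap\Omega$ with $|\nabla\psi|\lesssim 1/r'$, and $|u|\leq\|u\|_{L^\infty(\Omega)}$. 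Cauchy--Schwarz on the first RHS term followed by AM--GM absorption into the LHS reduces the proof to the two error estimates
\[
\iint_{\mathcal{A}} |\nabla\psi|^2\,G\,dX \;+\; \iint_{\mathcal{A}} |\nabla\psi|\,|\nabla G|\,dX \;\lesssim\; \omega_L^{X_\Delta}(\Delta').
\]

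To handle these error integrals I would Whitney-decompose $\mathcal{A}$ into cubes $I$ with $\ell(I)\approx\dist(I,\pom)\lesssim r'$, and associate to each $I$ a surface ball $\Delta_I$ of radius $\approx\ell(I)$ centered at a boundary point near $I$. The CFMS-type estimate $G(X)\lesssim\omega_L^{X_\Delta}(\Delta_I)/\ell(I)^{n-1}$ on $I$, together with the interior gradient bound $|\nabla G|\lesssim G/\ell(I)$ (valid because $X_\Delta$ is far from $I$), gives per-cube contributions of order $(\ell(I)^2/r'^2)\,\omega_L^{X_\Delta}(\Delta_I)$ and $(\ell(I)/r')\,\omega_L^{X_\Delta}(\Delta_I)$ respectively. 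At fixed scale $\ell$ the bounded overlap of $\{\Delta_I\}_{\ell(I)=\ell}$ yields sums $\lesssim(\ell^2/r'^2)\,\omega_L^{X_\Delta}(C\Delta')$ and $(\ell/r')\,\omega_L^{X_\Delta}(C\Delta')$, and the geometric sums over dyadic scales $\ell\leq Cr'$ converge to $\omega_L^{X_\Delta}(C\Delta')$, which the doubling property of $\omega_L^{X_\Delta}$ converts to $\omega_L^{X_\Delta}(\Delta')$.

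The main obstacle lies in the supporting machinery rather than in the identity itself. The CFMS-type comparison, change of pole, and doubling of $\omega_L^{X_\Delta}$ must all be established in the 1-sided NTA plus CDC setting, where surface measure may be locally infinite and exterior corkscrews need not exist. These properties rest on a Bourgain-type non-degeneracy estimate for $\omega_L$ which, in this generality, must be derived from the capacity density condition rather than from Ahlfors regularity; developing these tools is precisely the role of the preliminary sections of the paper. Once they are in hand, the argument sketched above goes through essentially as indicated.
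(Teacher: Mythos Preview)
Your strategy --- integrate by parts against a cutoff, then Whitney-decompose the region where the cutoff's gradient lives and sum CFMS-type per-cube bounds --- is the classical route and is in spirit close to what the paper does. But there is a genuine gap in the justification of the identity itself, not only in the background PDE machinery.

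Your test function $u^2\psi^2$ is not in $W^{1,2}_0(\Omega)$: the hypothesis is only $u\in W^{1,2}_{\rm loc}(\Omega)\cap L^\infty(\Omega)$, so $u$ need not vanish on $\partial\Omega$ nor have square-integrable gradient up to the boundary, and the Green identity $\iint A^\top\nabla G\cdot\nabla\varphi=\varphi(X_\Delta)$ (see \eqref{eq:G-delta}) is only available for $\varphi\in W^{1,2}_0(\Omega)$. Likewise $u\psi^2 G$ is not a priori an admissible test function for $Lu=0$: the term $\psi^2 G\,\nabla u$ lies in $L^2$ near $\partial\Omega$ only if $|\nabla u|^2 G\in L^1$ there, which is exactly the conclusion you are trying to prove. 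So the displayed identity cannot be written down directly.

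The paper resolves precisely this point by replacing the ball cutoff $\psi$ with a cutoff $\Psi_N$ that is \emph{compactly supported in $\Omega$}: it equals $1$ on a truncated dyadic sawtooth $\Omega_N=\Omega_{\mathcal F_N,Q_0}$ (Whitney regions down to scale $2^{-N}\ell(Q_0)$) and is supported in $\Omega_N^*\Subset\Omega$ (Lemma~\ref{lemma:approx-saw}). Then both $u\,\mathcal G_L\,\Psi_N$ and $u^2\Psi_N$ are genuinely in $W^{1,2}_0(\Omega)$, the two ``main'' terms vanish exactly as you anticipate, and the error terms live on the Whitney cubes $\mathcal W_N^\Sigma$ forming the boundary layer of the sawtooth. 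These are controlled by CFMS together with the bounded-overlap property \eqref{new-QI} of the associated surface cubes $\widehat Q_I$, with no geometric sum over scales needed. Monotone convergence in $N$ yields the dyadic estimate \eqref{eq:CME:dyadic}, and a separate covering argument passes from dyadic cubes to balls. Your annulus error analysis is essentially the same mechanism, but the interior truncation is the missing ingredient. One minor correction: the pointwise bound $|\nabla G|\lesssim G/\ell(I)$ fails for merely bounded measurable coefficients; use Caccioppoli plus Harnack on each Whitney cube instead, which still gives your per-cube estimate.
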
 

This result is in turn the main ingredient to 
obtain that the conical square function can be locally 
controlled by the non-tangential maximal function in norm with respect to the elliptic measure, 
allowing us to extend some estimates from \cite{DJK} to our general setting.

\begin{theorem}\label{THEOR:S<N}
Let $\Omega\subset\mathbb{R}^{n+1}$, $n\ge 2$, be a 1-sided NTA domain  \textup{(}cf. Definition \ref{def1.1nta}\textup{)} satisfying the capacity density condition \textup{(}cf. Definition \ref{def-CDC}\textup{)}.  Let $Lu=-\div(A\nabla u)$ be a 
	real \textup{(}non-necessarily symmetric\textup{)} elliptic operator and write $\omega_L$ to denote the associated elliptic measure and the Green function. For every $0<q<\infty$, there exists $C_q$ depending only on dimension $n$, the 1-sided NTA constants, the CDC constant, the ellipticity constant of $L$, and $q$, such that for every $u\in W^{1,2}_{\rm loc} (\Omega)$ with $Lu=0$ in the weak-sense in $\Omega$, for every $Q_0\in\dd$,  there holds 
	\begin{equation}\label{S<N}
		\|\mathcal{S}_{Q_0}u\|_{L^q(Q_0,\omega_L^{X_{Q_0}})}
		\le C_q\,
		\|\mathcal{N}_{Q_0}u\|_{L^q(Q_0,\omega_L^{X_{Q_0}})},
	\end{equation}
where $\mathcal{S}_{Q_0}$ and $\mathcal{N}_{Q_0}$ are the localized  dyadic conical square function and non-tangential maximal function respectively \textup{(}cf. \eqref{def:SF} and \eqref{def:NT}\textup{)}, and $X_{Q_0}$ is a corkscrew point relative to $Q_0$ \textup{(}see Section \ref{subsection:sawtooth}\textup{)}.
\end{theorem}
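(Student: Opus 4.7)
The plan is to run a good-$\lambda$ argument in the spirit of Dahlberg--Jerison--Kenig, now driven by our Carleson measure estimate on sawtooth subdomains. Concretely, the task reduces to proving that for every $\lambda>0$ and every sufficiently small $\gamma>0$,
\begin{equation*}
\omega_L^{X_{Q_0}}\!\bigl(\{x\in Q_0:\mathcal{S}_{Q_0}u(x)>2\lambda,\ \mathcal{N}_{Q_0}u(x)\le\gamma\lambda\}\bigr)\le C\gamma^{2}\,\omega_L^{X_{Q_0}}\!\bigl(\{x\in Q_0:\mathcal{S}_{Q_0}u(x)>\lambda\}\bigr);
\end{equation*}
multiplying by $q\lambda^{q-1}$ and integrating in $\lambda$ then produces \eqref{S<N}, after the standard a-priori truncation of the Whitney sum defining $\mathcal{S}_{Q_0}$ that makes everything a priori finite.

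To establish the good-$\lambda$ inequality I would decompose the $\omega_L^{X_{Q_0}}$-measurable set $E_\lambda:=\{\mathcal{S}_{Q_0}u>\lambda\}\cap Q_0$ into its maximal dyadic subcubes $\{Q_j\}\subset\mathbb{D}(Q_0)$, and for each $Q_j$ run the stopping time $\mathcal{F}_j:=\{Q\subset Q_j\text{ maximal with some }x\in Q\text{ satisfying }\mathcal{N}_{Q_0}u(x)>\gamma\lambda\}$. By Proposition~\ref{prop:CDC-inherit} the sawtooth $\Omega_{\mathcal{F}_j,Q_j}$ is a 1-sided NTA domain satisfying the CDC with constants independent of $j$, and by construction the non-tangential bound defining $\mathcal{N}_{Q_0}$ forces $|u|\lesssim\gamma\lambda$ on $\Omega_{\mathcal{F}_j,Q_j}$ (up to a widening of cones that will not matter for the norm bound). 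Theorem~\ref{THEOR:CME} applied to $u$ on this sawtooth then yields
\begin{equation*}
\iint_{\Omega_{\mathcal{F}_j,Q_j}}|\nabla u(X)|^{2}\,G_{\star}(X_{Q_j},X)\,dX\le C(\gamma\lambda)^{2}\,\omega_{\star}^{X_{Q_j}}(\partial\Omega_{\mathcal{F}_j,Q_j}),
\end{equation*}
where $G_{\star},\omega_{\star}$ denote the Green function and elliptic measure of $L$ in $\Omega_{\mathcal{F}_j,Q_j}$.

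The core of the proof is transferring this Carleson bound back to the original objects $G_L$ and $\omega_L^{X_{Q_0}}$, which is exactly what the discrete sawtooth projection Lemmas~\ref{lemma:DJK-sawtooth} and \ref{lemma:DJK-sawtooth:new} encode: up to uniform constants they identify $G_{\star}(X_{Q_j},\cdot)\,\omega_L^{X_{Q_0}}(Q_j)$ with $G_L(X_{Q_0},\cdot)$ on the pertinent part of $\Omega_{\mathcal{F}_j,Q_j}$, and the projection of $\omega_{\star}^{X_{Q_j}}$ onto $\partial\Omega$ with $\omega_L^{X_{Q_0}}/\omega_L^{X_{Q_0}}(Q_j)$. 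Using the Whitney--dyadic decomposition that defines $\mathcal{S}_{Q_0}$ together with Fubini, one recognizes
\begin{equation*}
\int_{Q_j}\mathcal{S}_{Q_0,\mathcal{F}_j}u(x)^{2}\,d\omega_L^{X_{Q_0}}(x)\,\approx\, \iint_{\Omega_{\mathcal{F}_j,Q_j}}|\nabla u(X)|^{2}\,G_L(X_{Q_0},X)\,dX,
\end{equation*}
where $\mathcal{S}_{Q_0,\mathcal{F}_j}u$ is the square function restricted to dyadic cubes not stopped by $\mathcal{F}_j$; chaining the displays produces
\begin{equation*}
\int_{Q_j}\mathcal{S}_{Q_0,\mathcal{F}_j}u(x)^{2}\,d\omega_L^{X_{Q_0}}(x)\le C(\gamma\lambda)^{2}\,\omega_L^{X_{Q_0}}(Q_j).
\end{equation*}

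On $\{x\in Q_j:\mathcal{N}_{Q_0}u(x)\le\gamma\lambda\}$ no cube of $\mathcal{F}_j$ can be met by the cones defining $\mathcal{S}_{Q_0}$, so at such $x$ one has $\mathcal{S}_{Q_0}u(x)\le \mathcal{S}_{Q_0,\mathcal{F}_j}u(x)+\mathcal{S}_{Q_0}u(\hat{x}_j)$, where $\hat{x}_j$ lies in a dyadic neighbor of $Q_j$ outside $E_\lambda$ so that $\mathcal{S}_{Q_0}u(\hat{x}_j)\le\lambda$ by the maximality of $Q_j$. Thus $\mathcal{S}_{Q_0}u(x)>2\lambda$ combined with $\mathcal{N}_{Q_0}u(x)\le\gamma\lambda$ forces $\mathcal{S}_{Q_0,\mathcal{F}_j}u(x)>\lambda$, and Chebyshev's inequality applied to the last display gives the desired good-$\lambda$ estimate on each $Q_j$; summation in $j$ closes the argument. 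I expect the principal obstacle to be the bookkeeping in the transfer step---aligning the Whitney/dyadic cones that build $\mathcal{S}_{Q_0}$ with the sawtooth geometry, carrying out the pole changes $X_{Q_j}\leftrightarrow X_{Q_0}$ cleanly, and verifying that the full and truncated square functions agree modulo the neighbor tail on the stopping set---because this is where the geometric inheritance of Proposition~\ref{prop:CDC-inherit} together with the doubling and change-of-pole properties supplied by the CDC must be invoked simultaneously.
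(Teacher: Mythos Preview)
Your overall architecture is right---truncation, maximal cubes in $E_\lambda$, a stopping-time sawtooth inside each $Q_j$, Carleson estimate on the sawtooth, Chebyshev, summation---but the transfer step is misconceived, and this forces two concrete corrections.

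First, the stopping time is stated backwards. ``Maximal $Q\subset Q_j$ with \emph{some} $x\in Q$ satisfying $\mathcal{N}_{Q_0}u(x)>\gamma\lambda$'' is not a useful family: if $Q_j$ contains such a point then $Q_j$ itself is maximal and the sawtooth is empty. What you need (and what the paper does) is to stop at maximal $Q\subset Q_j$ with $Q\cap F_\lambda=\emptyset$, where $F_\lambda=\{x\in Q_j:\mathcal{N}_{Q_0}u(x)\le\gamma\lambda\}$. Then every $Q\in\mathbb{D}_{\mathcal{F}_j,Q_j}$ meets $F_\lambda$, which is exactly what gives $|u|\le\gamma\lambda$ on $\Omega_*:=\Omega^*_{\mathcal{F}_j,Q_j}$.

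Second, and more seriously, Lemmas~\ref{lemma:DJK-sawtooth} and \ref{lemma:DJK-sawtooth:new} do \emph{not} identify $G_{\star}(X_{Q_j},\cdot)\,\omega_L^{X_{Q_0}}(Q_j)$ with $G_L(X_{Q_0},\cdot)$ up to constants; the sawtooth boundary has large pieces in the interior of $\Omega$, and no such pointwise Green function comparison holds there. What those lemmas give is only an $A_\infty$-type relation between $\omega_L$ and the sawtooth elliptic measure $\omega_L^*$, with an exponent $\theta>0$. Consequently your displayed estimate $\int_{Q_j}\mathcal{S}_{Q_0,\mathcal{F}_j}u^{\,2}\,d\omega_L^{X_{Q_0}}\lesssim(\gamma\lambda)^2\,\omega_L^{X_{Q_0}}(Q_j)$ cannot be obtained this way, and the good-$\lambda$ exponent $\gamma^2$ is not available in this generality. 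The paper's cure is to integrate $(\mathcal{S}_{Q_j}^{k_0}u)^2$ over $F_\lambda$ against the \emph{sawtooth} measure $\omega_L^*$; Fubini plus Lemma~\ref{lemma:proppde}(b) in $\Omega_*$ converts this to a Carleson integral with the \emph{sawtooth} Green function $G_{L,*}$, so Theorem~\ref{THEOR:CME} applied in $\Omega_*$ (legitimate by Proposition~\ref{prop:CDC-inherit}) yields the bound $(\gamma\lambda)^2$. Chebyshev then gives $\omega_L^*(\widetilde{E}_\lambda\cap F_\lambda)\lesssim(\gamma/\beta)^2$, and only now does one invoke \eqref{ainfsawtooth:new:new} to pass to $\omega_L^{X_{Q_0}}$, picking up the power $\vartheta=\theta/2$. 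The resulting good-$\lambda$ inequality has $(\gamma/\beta)^\vartheta$ on the right, which is still enough for \eqref{S<N} for every $0<q<\infty$.
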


We note that the estimate \eqref{S<N} is written for the localized dyadic conical square function and non-tangential maximal function. It is not difficult to see that, as a consequence, one can obtain a similar estimate for the regular localized (or truncated) conical square function and non-tangential maximal function with arbitrary apertures (see \cite[Lemma~4.8]{CDMT}), precise statements are left to the interested reader.

The plan of this paper is as follows. Section \ref{section:prelim} presents some of the preliminaries, definitions, and tools which will be used throughout the paper. Section \ref{section:DJK-proj} contains a dyadic version of the main lemma of \cite{DJK}. In Section \ref{section:SF-NT} we prove our main results, Theorem \ref{THEOR:CME} and Theorem \ref{THEOR:S<N}.

We would like to mention that after an initial version of this work was posted on arXiv \cite{AHMT-full}, Feneuil and Poggi in \cite{FP} obtained results related to ours, compare for instance Theorem \ref{THEOR:CME} with \cite[Theorem 1.27]{FP}. Also, the recent work \cite{CDMT} complements this paper and its companion \cite{AHMT-II}, see for instance \cite[Corollary 1.4]{CDMT}.

\section{Preliminaries}
\label{section:prelim}

\subsection{Notation and conventions}

\begin{list}{$\bullet$}{\leftmargin=0.4cm  \itemsep=0.2cm}
	
	\item We use the letters $c,C$ to denote harmless positive constants, not necessarily the same at each occurrence, which depend only on dimension and the
	constants appearing in the hypotheses of the theorems (which we refer to as the ``allowable parameters'').  We shall also sometimes write $a\lesssim b$ and $a \approx b$ to mean, respectively, that $a \leq C b$ and $0< c \leq a/b\leq C$, where the constants $c$ and $C$ are as above, unless
	explicitly noted to the contrary.   Unless otherwise specified upper case constants are greater than $1$  and lower case constants are smaller than $1$. In some occasions it is important to keep track of the dependence on a given parameter $\gamma$, in that case we write $a\lesssim_\gamma b$ or $a\approx_\gamma b$ to emphasize  that the implicit constants in the inequalities depend on $\gamma$.
	
	\item  Our ambient space is $\ree$, $n\ge 2$. 
	
	\item Given $E\subset\ree$ we write $\diam(E)=\sup_{x,y\in E}|x-y|$ to denote its diameter.
	
	\item Given a domain $\Omega \subset \ree$, we shall
	use lower case letters $x,y,z$, etc., to denote points on $\partial \Omega$, and capital letters
	$X,Y,Z$, etc., to denote generic points in $\ree$ (especially those in $\ree\setminus \partial\Omega$).
	
	\item The open $(n+1)$-dimensional Euclidean ball of radius $r$ will be denoted
	$B(x,r)$ when the center $x$ lies on $\partial \Omega$, or $B(X,r)$ when the center
	$X \in \ree\setminus \partial\Omega$.  A {\it surface ball} is denoted
	$\Delta(x,r):= B(x,r) \cap\partial\Omega$, and unless otherwise specified it is implicitly assumed that $x\in\pom$.
	
	\item If $\pom$ is bounded, it is always understood (unless otherwise specified) that all surface balls have radii controlled by the diameter of $\pom$, that is, if $\Delta=\Delta(x,r)$ then $r\lesssim \diam(\pom)$. Note that in this way $\Delta=\pom$ if $\diam(\pom)<r\lesssim \diam(\pom)$.
	
	

	\item For $X \in \ree$, we set $\delta(X):= \dist(X,\partial\Omega)$.
	
	\item We let $\mathcal{H}^n$ denote the $n$-dimensional Hausdorff measure. 
	
	\item For a Borel set $A\subset \ree$, we let $\mathbf{1}_A$ denote the usual
	indicator function of $A$, i.e. $\mathbf{1}_A(X) = 1$ if $X\in A$, and $\mathbf{1}_A(X)= 0$ if $X\notin A$.

	
	
%
	
	\item We shall use the letter $I$ (and sometimes $J$)
	to denote a closed $(n+1)$-dimensional Euclidean cube with sides
	parallel to the coordinate axes, and we let $\ell(I)$ denote the side length of $I$.
	We use $Q$ to denote  dyadic ``cubes''
	on $E$ or $\partial \Omega$.  The
	latter exist as a consequence of Lemma \ref{lemma:dyadiccubes} below.
	
\end{list}

\subsection{Some definitions}\label{ssdefs} 

\begin{definition}[\bf Corkscrew condition]\label{def1.cork}
	Following \cite{JK}, we say that an open set $\Omega\subset \ree$
	satisfies the {\it Corkscrew condition} if for some uniform constant $0<c_0<1$ and
	for every $x\in \partial\Omega$ and $0<r<\diam(\partial\Omega)$, if we write $\Delta:=\Delta(x,r)$, there is a ball
	$B(X_\Delta,c_0r)\subset B(x,r)\cap\Omega$.  The point $X_\Delta\subset \Omega$ is called
	a {\it Corkscrew point relative to} $\Delta$ (or, relative to $B$). We note that  we may allow
	$r<C\diam(\pom)$ for any fixed $C$, simply by adjusting the constant $c_0$.
	We say that $\Omega$ satisfies the {\em exterior Corkscrew condition} if
 $\Omega_{ext}:= \ree\setminus \overline{\Omega}$ 
	satisfies the Corkscrew condition.
\end{definition}


\begin{definition}[\bf Harnack Chain condition]\label{def1.hc}
Again following \cite{JK}, we say
	that $\Omega$ satisfies the {\it Harnack Chain condition} if there are uniform constants $C_1,C_2>1$ such that for every pair of points $X, X'\in \Omega$
	there is a chain of balls $B_1, B_2, \dots, B_N\subset \Omega$ with $N \leq  C_1(2+\log_2^+ \Pi)$ 
	where
	\begin{equation}\label{cond:Lambda}
	\Pi:=\frac{|X-X'|}{\min\{\delta(X), \delta(X')\}}.
	\end{equation} 
such that $X\in B_1$, $X'\in B_N$, $B_k\cap B_{k+1}\neq\emptyset$ and for every $1\le k\le N$
	\begin{equation}\label{preHarnackball}
	C_2^{-1} \diam(B_k) \leq \dist(B_k,\partial\Omega) \leq C_2 \diam(B_k).
	\end{equation}
	The chain of balls is called a {\it Harnack Chain}.
\end{definition}

We note that in the context of the previous definition if $\Pi\le 1$ we can trivially form the Harnack chain $B_1=B(X,3\delta(X)/5)$ and $B_2=B(X', 3\delta(X')/5)$ where \eqref{preHarnackball} holds with $C_2=3$. Hence the Harnack chain condition is non-trivial only when $\Pi> 1$.

\begin{definition}[\bf 1-sided NTA and NTA]\label{def1.1nta}
	We say that a domain $\Omega$ is a {\it 1-sided non-tangentially accessible domain} (1-sided NTA)  if it satisfies both the Corkscrew and Harnack Chain conditions.
	Furthermore, we say that $\Omega$ is a {\it non-tangentially accessible domain}	(NTA  domain)	if it is a 1-sided NTA domain and if, in addition, $\Omega_{\rm ext}:= \ree\setminus \overline{\Omega}$ also satisfies the Corkscrew condition.
\end{definition}
\begin{remark} 
	In the literature, 1-sided NTA domains are also called \textit{uniform domains}. We remark that the 1-sided NTA condition is a quantitative form of path connectedness.
\end{remark}

\begin{definition}[\bf Ahlfors  regular]\label{def1.ADR}
	We say that a closed set $E \subset \ree$ is {\it $n$-dimensional Ahlfors regular} (AR for short) if
	there is some uniform constant $C_1>1$ such that
	\begin{equation} \label{eq1.ADR}
	C_1^{-1}\, r^n \leq \mathcal{H}^n(E\cap B(x,r)) \leq C_1\, r^n,\qquad x\in E, \quad 0<r<\diam(E).
	\end{equation}
\end{definition}

\begin{definition}[\bf 1-sided CAD and CAD]\label{defi:CAD}
	A \emph{1-sided chord-arc domain} (1-sided CAD) is a 1-sided NTA domain with AR boundary.
	A \emph{chord-arc domain} (CAD) is an NTA domain with AR boundary.
\end{definition}

We next recall the definition of the capacity of a set.  Given an open set $D\subset \ree$ (where we recall that we always assume that $n\ge 2$) and a compact set $K\subset D$  we define the capacity of $K$ relative to $D$ as 
\[
\Cap(K, D)=\inf\left\{\iint_{D} |\nabla v(X)|^2 dX:\, \, v\in C^{\infty}_{0}(D),\, v(x)\geq  1 \mbox{ in }K\right\}.
\]

\begin{definition}[\textbf{Capacity density condition}]\label{def-CDC}
	An open set $\Omega$ is said to satisfy the \textit{capacity density condition} (CDC for short) if there exists a uniform constant $c_1>0$ such that
\begin{equation}\label{eqn:CDC}
	\frac{\Cap(\overline{B(x,r)}\setminus \Omega, B(x,2r))}{\Cap(\overline{B(x,r)}, B(x,2r))} \geq c_1
\end{equation}
	for all $x\in \partial\Omega$ and $0<r<\diam(\pom)$.
\end{definition}

The CDC is also known as the uniform 2-fatness as studied by Lewis in \cite{L88}. Using \cite[Example 2.12]{HKM} one has that 
\begin{equation}\label{cap-Ball}
\Cap(\overline{B(x,r)}, B(x,2r))\approx r^{n-1}, \qquad \mbox{for all $x\in\ree$ and $r>0$},
\end{equation}
and hence the CDC is a quantitative version of the Wiener regularity, in particular every $x\in\pom$ is Wiener regular. It is easy to see that the exterior Corkscrew condition implies CDC. Also, it was proved in \cite[Section 3]{Zhao} and \cite[Lemma 3.27]{HLMN} that a set with Ahlfors regular boundary satisfies the capacity density condition with constant $c_1$ depending only on $n$ and the Ahlfors regular constant.

\subsection{Existence of a dyadic grid}\label{ss-dyadic}

In this section we introduce  a dyadic grid along the lines of that obtained in \cite{C}. More precisely,  we will use the dyadic structure from \cite{HK1, HK2}, with a modification from \cite[Proof of Proposition 2.12]{HMMM}:

\begin{lemma}[\textbf{Existence and properties of the ``dyadic grid''}]\label{lemma:dyadiccubes}
Let $E\subset\re^{n+1}$ be a closed set. Then there exists a constant $C\ge 1$ depending just on $n$ such that for each $k\in\mathbb{Z}$ there is a collection of Borel sets  (called ``cubes'')
	$$
	\mathbb{D}_k:=\big\{Q_j^k\subset E:\ j\in\mathfrak{J}_k\big\},
	$$
	where $\mathfrak{J}_k$ denotes some (possibly finite) index set depending on $k$ satisfying:
	\begin{list}{$(\theenumi)$}{\usecounter{enumi}\leftmargin=1cm \labelwidth=1cm \itemsep=0.2cm \topsep=.2cm \renewcommand{\theenumi}{\alph{enumi}}}
		\item $E=\bigcup_{j\in \mathfrak{J}_k}Q_j^k$ for each $k\in\mathbb{Z}$.
		\item If $m\le k$ then either $Q_j^k \subset Q_i^m$ or $Q_i^m\cap Q_j^k=\emptyset$.
		\item For each $k\in\mathbb{Z}$, $j\in\mathfrak{J}_k$, and $m<k$, there is a unique $i\in\mathfrak{J}_m $ such that $Q_j^k\subset Q_i^m$.
		\item For each  $k\in\mathbb{Z}$, $j\in\mathfrak{J}_k$ there is $x_j^k\in E$ such that
		\[B(x_j^k, C^{-1}2^{-k})\cap E\subset Q_j^k \subset B(x_j^k, C 2^{-k})\cap E.\]

	\end{list}
\end{lemma}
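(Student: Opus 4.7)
The plan is to adapt the classical construction of M.\ Christ, in the refined form of Hytönen-Kairema used in \cite{HK1,HK2} with the modification of \cite[Proof of Proposition 2.12]{HMMM}. The backbone is a nested system of maximal $2^{-k}$-separated nets in $E$; once these are in place, one builds a tree of parent-child relations and defines the cubes as descendant sets.

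For each $k\in\mathbb{Z}$, use Zorn's lemma to produce a maximal $2^{-k}$-separated subset $\mathcal{X}_k=\{x_j^k\}_{j\in\mathfrak{J}_k}$ of $E$, carrying this out inductively so that $\mathcal{X}_k\subset\mathcal{X}_{k+1}$ for every $k$. Maximality yields the covering $E=\bigcup_j(B(x_j^k,2^{-k})\cap E)$, and separation yields $|x_i^k-x_j^k|\ge 2^{-k}$ for $i\ne j$. For every $z\in\mathcal{X}_{k+1}$ define its parent $\pi_k(z)$ to be the closest point of $\mathcal{X}_k$ (breaking ties by a fixed well-ordering of $\bigcup_k\mathcal{X}_k$); density forces $|z-\pi_k(z)|<2^{-k}$, and $\pi_k(z)=z$ whenever $z\in\mathcal{X}_k$. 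Iterating, each $y\in\mathcal{X}_m$ with $m\ge k$ has a unique ancestor $\pi_k^m(y)\in\mathcal{X}_k$, and we say $y$ is a \emph{descendant} of $x_j^k$ when $\pi_k^m(y)=x_j^k$.

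To define the cubes, assign each $x\in E$ a scale-$m$ center $\phi_m(x)\in\mathcal{X}_m$ by taking the closest element of $\mathcal{X}_m$ (same tie-breaking rule), chosen so that the ancestor identity $\phi_k(x)=\pi_k^m(\phi_m(x))$ holds for all $m\ge k$. Then set $Q_j^k:=\{x\in E:\phi_k(x)=x_j^k\}$. Properties (a)-(c) follow at once: (a) because $\phi_k$ is defined on all of $E$; (b) and (c) because the tree $\pi_k^m$ partitions each deeper net into descendant families of a coarser one, so $Q_j^k$ is the disjoint union of those $Q_i^{k+1}$ for which $\pi_k(x_i^{k+1})=x_j^k$.

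The substance of the proof is property (d), and this I expect to be the main obstacle. The upper containment $Q_j^k\subset B(x_j^k,C\,2^{-k})\cap E$ is a telescoping triangle inequality: if $\phi_k(x)=x_j^k$ and we approximate $x$ by $\phi_m(x)\to x$ as $m\to\infty$, the chain of ancestors gives $|x-x_j^k|\le\sum_{\ell\ge k}2^{-\ell}\lesssim 2^{-k}$. The delicate lower containment $B(x_j^k,C^{-1}2^{-k})\cap E\subset Q_j^k$ requires showing that any $x$ close to $x_j^k$ has, at every scale $m\ge k$, its nearest scale-$m$ net point lying in the descendant family of $x_j^k$ rather than of a sibling. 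One chooses $C^{-1}$ small enough that $|x-\phi_m(x)|<2^{-m}$ while every center descending from a sibling of $x_j^k$ lies at distance at least $2^{-k}-\sum_{\ell>k}2^{-\ell}$ from $x_j^k$, and hence strictly more than $2^{-m}$ from $x$; this separation pins $\phi_m(x)$ inside the descendants of $x_j^k$ and, on passing to $m=k$, gives $\phi_k(x)=x_j^k$. Calibrating $C$ so that this geometric separation is uniform in $n$ and $k$ is precisely the quantitative step provided by \cite{HK1,HK2}.
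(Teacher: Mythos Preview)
Your outline follows the right skeleton (nested nets, tree, cubes), but there is a genuine gap in the inner-ball step of~(d), and it is exactly the quantitative point you flag in your last paragraph. With scale ratio $1/2$ the separation-versus-descent bookkeeping collapses: a sibling $x_i^k$ satisfies $|x_i^k-x_j^k|\ge 2^{-k}$, while the parent--child edge from level $\ell$ to level $\ell+1$ has length $<2^{-\ell}$, so a descendant of $x_i^k$ at level $m$ can drift toward $x_j^k$ by as much as $\sum_{\ell=k}^{m-1}2^{-\ell}\to 2^{1-k}$. Your own lower bound $2^{-k}-\sum_{\ell>k}2^{-\ell}$ already equals $0$ (and with the correct sum, starting at $\ell=k$, it is negative), so nothing prevents a sibling's descendant from being the nearest scale-$m$ net point to $x$. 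No choice of $C^{-1}$ rescues this when the ratio is $1/2$; this is why the Hyt\"onen--Kairema theorem you cite requires a ratio $\delta$ bounded away from $1$ (roughly $\delta\le 1/12$), so deferring the ``calibration'' to \cite{HK1,HK2} does not close the gap either.

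The paper's proof handles this in two steps. First it runs the Hyt\"onen--Kairema construction with $\eta=\tfrac{1}{16}$, for which the descent sum $\sum_{\ell>k}\eta^\ell=\tfrac{1}{15}\eta^k$ is safely smaller than the separation $\eta^k$ and the inner ball in~(d) holds with $\eta^k$ in place of $2^{-k}$; call this grid $\{\mathfrak{D}_k\}_k$. Second, it applies the re-indexing trick from \cite[Proof of Proposition~2.12]{HMMM}: since $\eta=2^{-4}$, one declares $\mathbb{D}_j:=\mathfrak{D}_k$ for every $4k\le j<4(k+1)$, so four consecutive generations of $\mathbb{D}$ repeat the same collection of sets. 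Properties~(a)--(c) transfer immediately, and~(d) at scale $2^{-j}$ follows from~(d) at scale $\eta^k$ because $\eta^{k}=2^{-4k}$ and $2^{-j}$ agree up to a factor of $16$. This re-indexing is precisely the ``modification from \cite{HMMM}'' referenced before the lemma, and it is the ingredient your argument is missing.

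A secondary issue: your definition of $\phi_m$ imposes simultaneously ``closest point of $\mathcal{X}_m$'' and the ancestor compatibility $\phi_k(x)=\pi_k^m(\phi_m(x))$. These two requirements are in general incompatible, and the standard constructions build the cubes through the tree rather than through independent closest-point assignments at each scale.
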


\begin{proof}
We first note that $E$ is geometric doubling. That is, there exists $N$ depending just on $n$ such that for every $x\in E$ and $r>0$ one can cover the surface ball $B(x,r)\cap E$ with at most $N$ surface balls of the form $B(x_i,r/2)\cap E$ with $x_i\in E$ ---observe that geometric doubling for $E$ is inherited from the corresponding property on $\ree$ and that is why $N$ depends only on $n$ and it is independent of $E$. Besides, letting $\eta=\frac1{16}$, for every $k\in \ZZ$ it is easy to find a countable collection $\{x_j^k\}_{j\in \mathfrak{J}_k}\subset E$ such that
\[
|x_j^k-x_{j'}^k|\ge \eta^k, \qquad j,j'\in\mathfrak{J}_k,\ j\neq j';
\qquad
\min_{j\in\mathfrak{J}_k} |x-x_{j}|<\eta^k, \qquad\forall\,x\in E.
\]
Invoking then \cite{HK1, HK2} on $E$ with the Euclidean distance and $c_0=C_0=1$ one can construct a family of dyadic cubes associated with these families of points, say $\mathfrak{D}_k$ for $k\in\ZZ$. These satisfy $(a)$--$(d)$ in the statement with the only difference that we have to replace $2^{-k}$ by $\eta^{k}$ in $(d)$. 

At this point we follow the argument in  \cite[Proof of Proposition 2.12]{HMMM} with $\eta=\frac1{16}$. For any $k\in\ZZ$ we set $\dd_j=\mathfrak{D}_{k}$ for every $4k\le j<4(k+1)$. It is straightforward to show that properties $(a)$, $(b)$ and $(c)$ for the families $\dd_k$ follow at once from those for the families $\mathfrak{D}_k$. Regarding  $(d)$, let $Q^i\in \dd_j$ and let $k\in\ZZ$ such that $4k\le j<4(k+1)$ so that $Q^i\in\dd_j=\mathfrak{D}_k$. Writing $x^i\in E$ for the corresponding point associated with $Q^i\in\mathfrak{D}_k$ and invoking $(d)$ for $\mathfrak{D}_k$ we conclude
\[
B(x^i, {C}^{-1} 2^{-j})\cap E
\subset
B(x^i, C^{-1} \eta^{k})\cap E
\subset
Q^i
\subset 
B(x^i, C\eta ^{k})\cap E
\subset 
B(x^i, 16C 2^{-j})\cap E,
\]
hence $(d)$ holds.  
\end{proof}

A few remarks are in order concerning this lemma.  Note that by construction,  within the same generation (that is, within each $\dd_k$) the cubes are pairwise disjoint (hence, there are no repetitions). On the other hand, repetitions are allowed in the different generations, that is, one could have that $Q\in\dd_k$ and $Q'\in\dd_{k-1}$ agree. Then, although $Q$ and $Q'$ are the same set,  as cubes we understand that they are different. In short, it is then understood that $\dd$ is an indexed collection of sets where repetitions of sets are allowed in the different generations but not within the same generation. With this in mind, we can give a proper definition of the ``length'' of a cube (this concept has no geometric meaning for the moment). For every $Q\in\mathbb{D}_k$, we set $\ell(Q)=2^{-k}$, which is called the ``length'' of $Q$. Note that the ``length'' is well defined when considered on $\dd$, but it is not well-defined on the family of sets induced by $\dd$. It is important to observe that the ``length'' refers to the way the cubes are organized in the dyadic grid and in general may not have a geometrical meaning.  It is clear from $(d)$ that $\diam(Q)\lesssim \ell(Q)$ (we will see below that in our setting the converse hold, see Remark  \ref{remark:diam-radius}).

Let us observe that the generations run for all $k\in\ZZ$. However, as we are about to see, sometimes it is natural to truncate the generations. If $E$ is bounded and $k\in\ZZ$ is such that $\diam(E)<C^{-1}2^{-k}$, then there cannot be two distinct cubes in $\dd_k$. Thus, $\dd_k=\{Q^k\}$ with $Q^k=E$. 
Therefore, we are going to ignore those $k\in\mathbb{Z}$ such that $2^{-k}\gtrsim\diam(E)$. Hence, we shall denote by $\mathbb{D}(E)$ the collection of all relevant $Q_j^k$, i.e., $\mathbb{D}(E):=\bigcup_k\mathbb{D}_k$, where, if $\diam(E)$ is finite, the union runs over those $k\in\mathbb{Z}$ such that $2^{-k}\lesssim\diam(E)$. 

In what follows given $B=B(x,r)$ with $x\in E$ we will denote $\Delta=\Delta(x,r)=B\cap E$. We write $\Xi=2C^2$, with $C$ being the constant in Lemma \ref{lemma:dyadiccubes}, which is a purely dimensional. For $Q\in\mathbb{D}(E)$ we will set $k(Q)=k$ if $Q\in\mathbb{D}_k$. Property $(d)$ implies that for each cube $Q\in\mathbb{D}$, there exist $x_Q\in E$ and $r_Q$, with $\Xi^{-1}\ell(Q)\leq r_Q\leq\ell(Q)$ (indeed $r_Q= (2C)^{-1}\ell(Q)$), such that
    \begin{equation}\label{deltaQ}
    \Delta(x_Q,2r_Q)\subset Q\subset\Delta(x_Q,\Xi r_Q).
    \end{equation} 
    We shall denote these balls and surface balls by
    \begin{equation}\label{deltaQ2}
    B_Q:=B(x_Q,r_Q),\qquad\Delta_Q:=\Delta(x_Q,r_Q),
    \end{equation}
    \begin{equation}\label{deltaQ3}
    \widetilde{B}_Q:=B(x_Q,\Xi r_Q),\qquad\widetilde{\Delta}_Q:=\Delta(x_Q,\Xi r_Q),
    \end{equation}
    and we shall refer to the point $x_Q$ as the ``center'' of $Q$.
    
Let $Q\in\dd_k$ and consider  the family of its dyadic children $\{Q'\in \dd_{k+1}: Q'\subset Q\}$. Note that for any two distinct children $Q', Q''$, one has $|x_{Q'}-x_{Q''}|\ge r_{Q'}=r_{Q''}=r_Q/2$, otherwise $x_{Q''}\in Q''\cap \Delta_{Q'}\subset Q''\cap Q'$, contradicting the fact that $Q'$ and $Q''$ are disjoint. Also $x_{Q'}, x_{Q''}\in Q\subset \Delta(x_Q,r_Q)$, hence by the geometric doubling property we have a purely dimensional bound for the number of such $x_{Q'}$ and hence the number of dyadic children of a given dyadic cube is uniformly bounded.

\begin{lemma}\label{lemma:thin-boundaries}
Let $E\subset\ree$ be a closed set and let $\dd(E)$ be the dyadic grid as in Lemma \ref{lemma:dyadiccubes}. Assume that there is a Borel measure $\mu$ which is doubling, that is, there exists $C_\mu\ge 1$ such that $\mu(\Delta(x,2r))\le C_\mu  \mu(\Delta(x,r))$ for every $x\in E$ and $r>0$. Then $\mu(\partial Q)=0$ for every $Q\in\dd(E)$. Moreover, there exist $0<\tau_0<1$, $C_1$, and $\eta>0$ depending only on dimension and $C_\mu$ such that for every $\tau\in(0,\tau_0)$ and $Q\in\dd(E)$
\begin{equation}\label{eqn:thin-boundary}
\mu \big(\big\{x\in Q:\,\dist(x,E\setminus Q)\leq\tau \ell(Q)\big\}\big)
\leq C_1
\tau^\eta \mu(Q).
\end{equation}
\end{lemma}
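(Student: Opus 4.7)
Plan: prove the quantitative thin-boundary estimate \eqref{eqn:thin-boundary} first, by a self-improving dyadic iteration based on the doubling of $\mu$, and then deduce $\mu(\partial Q)=0$ as a quick topological consequence. Throughout, write $A^Q_\tau:=\{x\in Q:\dist(x,E\setminus Q)\le \tau\ell(Q)\}$ and note that if $Q=E$ then $A^Q_\tau=\emptyset$, so we may assume $E\setminus Q\ne\emptyset$.

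\textbf{Base estimate.} I would first show that there are $\lambda\in(0,1)$ and $N_0\in\mathbb{N}$, depending only on $n$ and $C_\mu$, with $\mu(A^Q_{2^{-N_0}})\le \lambda\,\mu(Q)$ for every $Q\in\dd(E)$. By \eqref{deltaQ} (property~(d) of Lemma~\ref{lemma:dyadiccubes}), the surface ball $\Delta_Q=\Delta(x_Q,r_Q)$ satisfies $\Delta_Q\subset Q\subset\Delta(x_Q,\Xi r_Q)$ with $r_Q\ge \Xi^{-1}\ell(Q)$. Choosing $N_0\ge\lceil\log_2\Xi\rceil+1$ forces $2^{-N_0}\ell(Q)<r_Q$, so $\dist(\Delta_Q,E\setminus Q)\ge r_Q>2^{-N_0}\ell(Q)$ and $\Delta_Q\cap A^Q_{2^{-N_0}}=\emptyset$. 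Iterating doubling $O(\log\Xi)$ times on the nested chain $\Delta(x_Q,r_Q)\subset\Delta(x_Q,\Xi r_Q)\supset Q$ gives $\mu(\Delta_Q)\ge c_0\mu(Q)$ with $c_0\in(0,1)$ depending only on $C_\mu$ and $n$, and we may take $\lambda:=1-c_0$.

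\textbf{Iteration.} Enlarging $N_0$ if necessary so that $2^{N_0}\ge C_*+1$ (where $C_*$ is the dimensional constant from (d) satisfying $\diam(\tilde Q)\le C_*\ell(\tilde Q)$), I claim
\[
\mu\bigl(A^Q_{2^{-jN_0}}\bigr)\le \lambda\,\mu\bigl(A^Q_{2^{-(j-2)N_0}}\bigr),\qquad j\ge 2.
\]
By parts (b)–(c) of Lemma~\ref{lemma:dyadiccubes} the family $\mathcal{F}_j:=\{\tilde Q\in\dd_{k(Q)+(j-1)N_0}:\tilde Q\subset Q,\;\tilde Q\cap A^Q_{2^{-jN_0}}\ne\emptyset\}$ is pairwise disjoint and covers $A^Q_{2^{-jN_0}}$. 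Two observations close the loop: (i) for any $\tilde Q\in\mathcal{F}_j$ and $y\in\tilde Q$, picking $x\in\tilde Q\cap A^Q_{2^{-jN_0}}$ the triangle inequality gives
\[
\dist(y,E\setminus Q)\le |y-x|+\dist(x,E\setminus Q)\le C_*\ell(\tilde Q)+2^{-jN_0}\ell(Q)\le (C_*+1)\,2^{-(j-1)N_0}\ell(Q)\le 2^{-(j-2)N_0}\ell(Q),
\]
so $\bigsqcup_{\tilde Q\in\mathcal{F}_j}\tilde Q\subset A^Q_{2^{-(j-2)N_0}}$; (ii) since $E\setminus Q\subset E\setminus\tilde Q$ and $\ell(\tilde Q)=2^{-(j-1)N_0}\ell(Q)$, one has $\tilde Q\cap A^Q_{2^{-jN_0}}\subset A^{\tilde Q}_{2^{-N_0}}$. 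Applying the base estimate to each $\tilde Q$ yields $\mu(\tilde Q\cap A^Q_{2^{-jN_0}})\le\lambda\mu(\tilde Q)$; summing over $\mathcal{F}_j$ and using (i) proves the claim. Iterating produces $\mu(A^Q_{2^{-jN_0}})\le\lambda^{\lfloor j/2\rfloor}\mu(Q)$, and for $\tau\in(0,2^{-N_0})$ one picks $j$ with $2^{-(j+1)N_0}<\tau\le 2^{-jN_0}$ to obtain \eqref{eqn:thin-boundary} with $\tau_0=2^{-N_0}$, $\eta=-\log_2\lambda/(2N_0)>0$, and $C_1$ absolute.

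\textbf{Measure of $\partial Q$.} For $x\in\partial Q\cap Q$, every neighborhood of $x$ meets $E\setminus Q$, so $\dist(x,E\setminus Q)=0$ and hence $x\in A^Q_\tau$ for all $\tau>0$; letting $\tau\downarrow 0$ in \eqref{eqn:thin-boundary} gives $\mu(\partial Q\cap Q)=0$. For $x\in\partial Q\setminus Q$, by (a) $x$ lies in a (unique) $Q'\in\dd_{k(Q)}$ distinct from $Q$, and $x$ is a limit point of $Q\subset E\setminus Q'$, so $x\in\bigcap_{\tau>0}A^{Q'}_\tau$; since $\dd_{k(Q)}$ is countable and each such set has $\mu$-measure zero by the previous case, $\mu(\partial Q\setminus Q)=0$ as well. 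The single technical point requiring care is the choice of $N_0$ in Step~2 so that the two-step recursion $\mu(T_j)\le\lambda\mu(T_{j-2})$ closes with both factors usable; once $N_0$ is fixed, the rest is bookkeeping.
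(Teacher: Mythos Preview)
Your proof is correct and takes essentially the same approach as the paper: show via doubling that $\Delta_Q$ carries a fixed fraction of $\mu(Q)$ and lies outside the thin boundary layer, then iterate over descendant subcubes to propagate the gain. The paper organizes the iteration as a one-step recursion on $\Sigma_\tau\subset\overline{Q}$ (using slightly modified cubes $\widehat{Q}_i^1$ that absorb pieces of $\partial Q$) and then reads off $\mu(\partial Q)=0$ directly from $\partial Q\subset\bigcap_j\Sigma_{2^{-j}}$, whereas your two-step recursion on $A^Q_\tau\subset Q$ and the case split $\partial Q\cap Q$ versus $\partial Q\setminus Q$ are equivalent bookkeeping variants of the same argument.
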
    

\begin{proof}
The argument is a refinement of that in \cite[Proposition 6.3]{HM1} (see also \cite[p. 403]{GR} where the Euclidean case was
treated).
Fix an integer $k$, a cube $Q\in \dd_k$,
and a positive integer $m$ to be chosen.   Fix $\tau>0$ small enough to be chosen and write
\[
\Sigma_\tau
=
\big\{x\in \overline{Q}: \dist(x,E\setminus Q)<\tau \ell(Q)
\big\}.
\]
We set $$\{Q^1_i\}:=\dd^1:=\dd_Q\cap \dd_{k+m}\,,$$
and make the
disjoint decomposition
$Q=\bigcup Q^1_i.$  We then split $\dd^1=\dd^{1,1} \cup \dd^{1,2}$, where
$Q^1_i\in \dd^{1,1}$ if $\overline{Q_i^1}$ meets $\Sigma_\tau$, and
$Q^1_i\in \dd^{1,2}$ otherwise.  We then write $\overline{Q}=R^{1,1}\cup R^{1,2}$, where
\[
R^{1,1}:= \bigcup_{\dd^{1,1}} \widehat{Q}^1_i\,,
\qquad R^{1,2}:=
\bigcup_{\dd^{1,2}} Q^1_i,
\]
and for each cube $Q^1_i\in\dd^{1,1}$, we construct $\widehat{Q}^1_i$ as follows.
We enumerate the elements in $\dd^{1,1}$ as $Q^1_{i_1},Q^1_{i_2},\dots,Q^1_{i_N}$, and then set $(Q^1_i)^*=Q^1_i\cup(\partial Q^1_i\cap \partial Q)$ and
\[
\widehat{Q}^1_{i_1}:=(Q^1_{i_1})^*,
\quad
\widehat{Q}^1_{i_2}:=(Q^1_{i_2})^*\setminus(Q^1_{i_1})^*,
\quad
\widehat{Q}^1_{i_3}:=(Q^1_{i_3})^*\setminus((Q^1_{i_1})^*\cup (Q^1_{i_2})^*),\ \dots
\]
so that $R^{1,1}$ covers $\Sigma_\tau$ and the modified cubes $\widehat{Q}^1_i$
are pairwise disjoint. 

We also note from \eqref{deltaQ} that if $2^{-m}<\Xi^{-2}/4$ then
\[
\dist\big(\Delta_Q,E\setminus Q\Big) \geq r_Q \ge \Xi^{-1}\ell(Q),
\qquad
\diam(Q^1_i)\le 2\Xi r_{Q^1_i}\le  2\Xi\ell(Q^1_i)<\frac{\Xi^{-1}}{2}\ell(Q).
\]
Then $R^{1,1}$ misses $\Delta_Q$ provided $\tau<\Xi^{-1}/2$. Otherwise, we can find $x\in \overline{Q^1_i}\cap\Delta_Q$ with $Q^1_i\in \dd^{1,1}$. 
The latter implies that there is $y\in \overline{Q^1_i}\cap \Sigma_\tau$. All these yield a contradiction:
\begin{align*}
\Xi^{-1}\ell(Q)
\le
\dist\big(\Delta_Q,E\setminus Q\Big)
\le
|x-y|+
\dist\big(y,E\setminus Q\big)
\le
\diam (\overline{Q^1_i})+\tau\ell(Q)
<
\Xi^{-1}\ell(Q).
\end{align*}
Consequently, by the doubling property,
\[
\mu(\overline{Q})
\le
\mu(2\widetilde{\Delta}_Q)
\leq 
C_{\mu}'\,\mu(\Delta_Q)
\leq 
C_{\mu}'\,\mu(R^{1,2}).
\]
Since $R^{1,1}$ and $R^{1,2}$ are disjoint, the latter estimate yields
$$\mu(R^{1,1})\leq \Big(1-\frac1{C_{\mu}'}\Big)\,\mu(\overline{Q})=:\theta\,\mu(\overline{Q}),$$
where we note that $0<\theta<1$.

Let us now repeat this procedure, decomposing $\widehat{Q}^1_i$ for each $Q_i^1\in\dd^{1,1}$. We set $\dd^2(Q^1_i)=\dd_{Q^1_i}\cap \dd_{k+2m}$ and split it into $\dd^{2,1}(Q^1_i)$ and $\dd^{2,2}(Q^1_i)$ where $Q'\in \dd^{2,1}(Q^1_i)$ if
$\overline{Q'}$ meets $\Sigma_\tau$. Associated to any $Q'\in \dd^{2,1}(Q^1_i)$ we set $(Q')^*=(Q'\cap \widehat{Q}^1_i)\cup(\partial Q'\cap (\partial Q\cap \widehat{Q}^1_i))$. Then we make these sets disjoint as before and we have that $R^{2,1}(Q^1_i)$ is defined as the disjoint union of the corresponding $\widehat{Q'}$. Note that $\widehat{Q}^1_i= R^{2,1}(Q^1_i)\cup R^{2,2}(Q^1_i)$ and this is a disjoint union. As before, $R^{2,1}(Q^1_i)$ misses
$\Delta_{Q_i^1}$ provided $\tau< 2^{-m} \Xi^{-1}/2$
so that  by the doubling property
\[
\mu(\widehat{Q}^1_i)
\le
\mu(2\widetilde{\Delta}_{Q^1_i})
\leq 
C_{\mu}'\,\mu(\Delta_{Q_i^1})
\leq 
C_{\mu}'\,\mu(R^{2,2}(Q^1_i))
\]
and then $\mu(R^{2,1}(Q^1_i))\leq \theta\,\mu(\widehat{Q}^1_i).$ Next we set $R^{2,1}$ and $R^{2,2}$ as the union of the corresponding $R^{2,1}(Q^1_i)$ and $R^{2,2}(Q^1_i)$ with $Q_i^1\in\dd^{1,1}$. Then,
\begin{multline*}
\mu(R^{2,1})
:=
\mu \Big(\bigcup_{Q_i^1\in\dd^{1,1}} R^{2,1}(Q^1_i)\Big)
=
\sum_{Q_i^1\in\dd^{1,1}} \mu\big(R^{2,1}(Q^1_i)\big)
\\
\leq
\theta
\sum_{Q_i^1\in\dd^{1,1}} \mu(\widehat{Q}^1_i)
=
\theta\,\mu(R^{1,1})
\le
\theta^2\,\mu(\overline{Q}).
\end{multline*}

Iterating this procedure we obtain that for every $k=0,1,\dots$, if $\tau< 2^{-km} \Xi^{-1}/2$ then $\mu(R^{k+1,1})\le \theta^{k+1}\mu(\overline{Q})$. Let us see that this leads to the desired estimates. Fix $\tau<\Xi^{-1}/2$ and  find $k\ge 0$ such that $2^{-(k+1)m} \Xi^{-1}/2\le  \tau< 2^{-km} \Xi^{-1}/2 $. By construction $\Sigma_\tau\subset R^{k+1,1}$ and then
\[
\mu(\Sigma_\tau)
\le
\mu (R^{k+1,1})
\le
\theta^{k+1}\mu(\overline{Q})
\le
(2\Xi)^{\frac{\log_2 \theta^{-1}}{m}}\,
\tau^{\frac{\log_2 \theta^{-1}}{m}}\mu(\overline{Q}),
\]
which easily gives \eqref{eqn:thin-boundary} with $C_1=(2\Xi)^{\frac{\log_2 \theta^{-1}}{m}}$ and $\eta={\frac{\log_2 \theta^{-1}}{m}}$. On the other hand, note that
\[
\partial Q\subset \bigcap_{j: 2^{-j}<\Xi^{-1}/2} \Sigma_{2^{-j}},
\]
also $\Sigma_{2^{-(j+1)}}\subset \Sigma_{2^{-j}}$. Thus clearly,
\[
0\le \mu(\partial Q)
\le
\lim_{j\to\infty}  \mu(\Sigma_{2^{-j}})
\le
\lim_{j\to\infty}  C_1 2^{-j\eta}\mu(Q)=0,
\]
yielding that $\mu(\partial Q) = 0$.   
\end{proof}
    
\begin{remark}\label{remark:thin-boundaries} 
Note that the previous argument is local in the sense that if we just want to obtain the desired estimates for a fixed $Q_0$ we would only need to assume that $\mu$ is doubling in $2\widetilde{\Delta}_{Q_0}$. Indeed we would just need to know that $\mu(\Delta(x,2r))\le C\, \mu(\Delta(x,r))$ for every $x\in Q_0$ and $0<r<\Xi \ell(Q_0)$, and the involved constants in the resulting estimates will depend only on dimension and $C_\mu$. Further details are left to the interested reader.
\end{remark}

We next introduce the ``discretized Carleson region'' relative to $Q$, $\mathbb{D}_{Q}=\{Q'\in\dd:Q'\subset Q\}$. Let $\mathcal{F}=\{Q_i\}\subset\mathbb{D}$ be a family of pairwise disjoint cubes. The ``global discretized sawtooth'' relative to $\mathcal{F}$ is the collection of cubes $Q\in\mathbb{D}$ that are not contained in any $Q_i\in\mathcal{F}$, that is,
$$
\mathbb{D}_\mathcal{F}:=\mathbb{D}\setminus\bigcup_{Q_i\in\mathcal{F}}\mathbb{D}_{Q_i}.
$$
For a given $Q\in\mathbb{D}$, the ``local discretized sawtooth'' relative to $\mathcal{F}$ is the collection of cubes in $\mathbb{D}_Q$ that are not contained in any $Q_i\in\mathcal{F}$ or, equivalently,
$$
\mathbb{D}_{\mathcal{F},Q}:=\mathbb{D}_{Q}\setminus\bigcup_{Q_i\in\mathcal{F}}\mathbb{D}_{Q_i}=\mathbb{D}_\mathcal{F}\cap\mathbb{D}_Q.
$$
We also allow $\F$ to be the null set in which case $\mathbb{D}_{\tinyemptyset}=\dd$ and $\mathbb{D}_{\tinyemptyset,Q}=\dd_Q$.

With a slight abuse of notation,  let $Q^0$ be either $E$, and in that case $\dd_{Q^0}:=\dd$, or a fixed cube in $\dd$, hence $\dd_{Q^0}$ is the family of dyadic subcubes of $Q^0$. Let $\mu$ be a non-negative Borel measure on $Q^0$ so that $0<\mu(Q)<\infty$ for every $Q\in\mathbb{D}_{Q^0}$.  For the rest of the section we will be working with $\mu$ which is dyadically doubling in $Q^0$. This means that there exists $C_\mu$ such that
$\mu(Q)\le C_\mu \mu(Q')$ for every $Q, Q'\in\dd_{Q^0}$ with $\ell(Q)=2\ell(Q')$.

\begin{definition}[$A_{\infty}^{\rm dyadic}$]\label{def:Ainfty-dyadic}
	
	Given $Q^0$ and $\mu$, a non-negative dyadically doubling measure in $Q^0$, a non-negative Borel measure $\nu$ defined on $Q^0$ is said to belong to $A_\infty^{\rm dyadic}(Q^0,\mu)$ if there exist constants $0<\alpha,\beta<1$ such that for every $Q\in\mathbb{D}_{Q^0}$ and for every Borel set $F\subset Q$, we have that
	\begin{equation}\label{cond-Ainfty-dyadic}
		\frac{\mu(F)}{\mu(Q)}>\alpha
		\qquad\implies\qquad
		\frac{\nu(F)}{\nu(Q)}>\beta.
	\end{equation}
\end{definition}

It is well known (see \cite{CF-1974,GR}) that since $\mu$ is a dyadically doubling measure in $Q^0$, $\nu\in A_\infty^{\rm dyadic}(Q^0,\mu)$ if and only if $\nu\ll\mu$ in $Q^0$  and there exists $1<p<\infty$ such that $\nu\in RH_p^{\rm dyadic}(Q^0,\mu)$, that is, there is a constant $C\ge 1$ such that
$$
\bigg(\aver{Q} k(x)^{p}\,d\mu (x)\bigg)^{\frac{1}{p}}
\leq
C\aver{Q} k(x)\,d\mu(x)
= 
C\,
\frac{\nu(Q)}{\mu(Q)},
$$
for every $Q\in\dd_{Q^0}$, and where $k=d\nu/d\mu$ is the Radon-Nikodym derivative.

For each $\mathcal{F}=\{Q_i\}\subset\mathbb{D}_{Q^0}$, a family of pairwise disjoint dyadic cubes, and each $f\in L^1_{\rm loc}(\mu)$, we define the projection operator
$$
\mathcal{P}_{\mathcal{F}}^\mu f(x)
=
f(x)\mathbf{1}_{E\setminus(\bigcup_{Q_i\in\mathcal{F}} Q_i)}(x)+\sum_{Q_i\in\mathcal{F}}\Big(\aver{Q_i}f(y)\,d\mu(y)\Big)\mathbf{1}_{Q_i}(x).
$$
If $\nu$ is a non-negative Borel measure on $Q^0$, we may naturally then define the measure $\mathcal{P}_\mathcal{F}^\mu\nu$ as $\mathcal{P}_{\mathcal{F}}^\mu\nu(F)=\int_{E}\mathcal{P}_{\mathcal{F}}^\mu\mathbf{1}_F\,d\nu$, that is,
\begin{equation}\label{defprojection}
	\mathcal{P}_{\mathcal{F}}^\mu\nu(F)=\nu\Big(F\setminus\bigcup_{Q_i\in\mathcal{F}}Q_i\Big)+\sum_{Q_i\in\mathcal{F}}\frac{\mu(F\cap Q_i)}{\mu(Q_i)}\nu(Q_i),
\end{equation}
for each Borel set $F\subset Q^0$.

\subsection{Sawtooth domains}\label{subsection:sawtooth}

In the sequel, $\Omega\subset\re^{n+1}$, $n\geq 2$, will be a 1-sided NTA domain satisfying the CDC. Write $\dd=\dd(\pom)$ for the dyadic grid obtained from Lemma \ref{lemma:dyadiccubes} with $E=\pom$. In Remark \ref{remark:diam-radius} below we shall show that under the present assumptions one has that $\diam(\Delta)\approx r_{\Delta}$ for every surface ball $\Delta$. In particular $\diam(Q)\approx\ell(Q)$ for every $Q\in\dd$ in view of \eqref{deltaQ}. Given $Q\in\mathbb{D}$ we define the ``Corkscrew point relative to $Q$'' as $X_Q:=X_{\Delta_Q}$. We note that
    $$
    \delta(X_Q)\approx\dist(X_Q,Q)\approx\diam(Q).
    $$

As done above, given $Q\in\mathbb{D}$ and $\F$ a possibly empty family of pairwise disjoint dyadic cubes, we can define $\mathbb{D}_Q$, the ``discretized Carleson region''; $\mathbb{D}_\mathcal{F}$, the  ``global discretized sawtooth'' relative to $\mathcal{F}$; and $\mathbb{D}_{\mathcal{F},Q}$, the ``local discretized sawtooth'' relative to $\mathcal{F}$. Note that if $\F$ to be the null set in which case $\mathbb{D}_{\tinyemptyset}=\dd$ and $\mathbb{D}_{\tinyemptyset,Q}=\dd_Q$. We would like to introduce the ``geometric'' Carleson regions and sawtooths.  

Let $\mathcal{W}=\mathcal{W}(\Omega)$ denote a collection of (closed) dyadic Whitney cubes of $\Omega\subset\re^{n+1}$, so that the cubes in $\mathcal{W}$  
form a covering of $\Omega$ with non-overlapping interiors, and satisfy
\begin{equation}\label{constwhitney}
4\diam(I)\leq\dist(4I,\partial\Omega)\leq\dist(I,\partial\Omega)\leq 40\diam(I),\qquad\forall I\in\mathcal{W},
\end{equation}
and
$$
\diam(I_1)\approx\diam(I_2),\,\text{ whenever }I_1\text{ and }I_2\text{ touch}.
$$
Let $X(I)$ denote the center of $I$, let $\ell(I)$ denote the side length of $I$, and write $k=k_I$ if $\ell(I)=2^{-k}$.

Given $0<\lambda<1$ and $I\in\mathcal{W}$ we write $I^*=(1+\lambda)I$ for the ``fattening'' of $I$. By taking $\lambda$ small enough, we can arrange matters, so that, first, $\dist(I^*,J^*)\approx\dist(I,J)$ for every $I,J\in\mathcal{W}$. Secondly, $I^*$ meets $J^*$ if and only if $\partial I$ meets $\partial J$ (the fattening thus ensures overlap of $I^*$ and $J^*$ for any pair $I,J\in\mathcal{W}$ whose boundaries touch, so that the Harnack Chain property then holds locally in $I^*\cup J^*$, with constants depending upon $\lambda$). By picking $\lambda$ sufficiently small, say $0<\lambda<\lambda_0$, we may also suppose that there is $\tau\in(\frac12,1)$ such that for distinct $I,J\in\mathcal{W}$, we have that $\tau J\cap I^*=\emptyset$. In what follows we will need to work with dilations $I^{**}=(1+2\lambda)I$ or $I^{***}=(1+4\lambda)I$, and in order to ensure that the same properties hold we further assume that $0<\lambda<\lambda_0/4$.

For every $Q\in\mathbb{D}$ we can construct a family $\mathcal{W}_Q^*\subset\mathcal{W}(\Omega)$, and define
$$
U_Q:=\bigcup_{I\in\mathcal{W}_Q^*}I^*,
$$
satisfying the following properties: $X_Q\in U_Q$ and there are uniform constants $k^*$ and $K_0$ such that
\begin{align}
\label{kstar_K0}
\begin{split}
k(Q)-k^*\leq k_I\leq k(Q)+k^*,\quad\forall I\in\mathcal{W}_Q^*,
\\[4pt]
X(I)\rightarrow_{U_Q} X_Q,\quad\forall I\in\mathcal{W}_Q^*,
\\[4pt]
\dist(I,Q)\leq K_0 2^{-k(Q)},\quad\forall I\in\mathcal{W}_Q^*.
\end{split}
\end{align}
Here, $X(I)\rightarrow_{U_Q} X_Q$ means that the interior of $U_Q$ contains all balls in a Harnack Chain (in $\Omega$) connecting $X(I)$ to $X_Q$, and moreover, for any point $Z$ contained in any ball in the Harnack Chain, we have $\dist(Z,\partial\Omega)\approx\dist(Z,\Omega\setminus U_Q)$ with uniform control of the implicit constants. The constants $k^*, K_0$ and the implicit constants in the condition $X(I)\rightarrow_{U_Q} X_Q$, depend on the allowable parameters and on $\lambda$. Moreover, given $I\in\mathcal{W}(\Omega)$ we have that $I\in\mathcal{W}_{Q_I}^*$, where $Q_I\in\dd$ satisfies $\ell(Q_I)=\ell(I)$, and contains any fixed $\widehat{y}\in\partial\Omega$ such that $\dist(I,\partial\Omega)=\dist(I,\widehat{y})$. The reader is referred to \cite{HM1, HMT1} for full details.

For a given $Q\in\mathbb{D}$, the ``Carleson box'' relative to $Q$ is defined by
$$
T_Q:=\interior\bigg(\bigcup_{Q'\in\mathbb{D}_Q}U_{Q'}\bigg).
$$
For a given family $\mathcal{F}=\{Q_i\}\subset\dd$ of pairwise disjoint cubes and a given $Q\in\mathbb{D}$, we define the ``local sawtooth region'' relative to $\mathcal{F}$ by
\begin{equation}
\label{defomegafq}
\Omega_{\mathcal{F},Q}=\interior\bigg(\bigcup_{Q'\in\mathbb{D}_{\mathcal{F},Q}}U_{Q'}\bigg)=\interior\bigg(\bigcup_{I\in\mathcal{W}_{\mathcal{F},Q}}I^*\bigg),
\end{equation}
where $\mathcal{W}_{\mathcal{F},Q}:=\bigcup_{Q'\in\mathbb{D}_{\mathcal{F},Q}}\mathcal{W}_Q^*$. Note that in the previous definition we may allow $\F$ to be empty in which case clearly $\Omega_{\tinyemptyset ,Q}=T_Q$. Similarly, the ``global sawtooth region'' relative to $\mathcal{F}$ is defined as
\begin{equation}
\label{defomegafq-global}
\Omega_{\mathcal{F}}=\interior\bigg(\bigcup_{Q'\in\mathbb{D}_{\mathcal{F}}}U_{Q'}\bigg)=\interior\bigg(\bigcup_{I\in\mathcal{W}_{\mathcal{F}}}I^*\bigg),
\end{equation}
where $\mathcal{W}_{\mathcal{F}}:=\bigcup_{Q'\in\mathbb{D}_{\mathcal{F}}}\mathcal{W}_Q^*$. If $\F$ is the empty set clearly $\Omega_{\tinyemptyset}=\Omega$.
For a given $Q\in\dd$  and $x\in \pom$ let us introduce the ``truncated dyadic cone'' 
\[
\Gamma_{Q}(x) := \bigcup_{x\in Q'\in\mathbb{D}_{Q}}  U_{Q'},
\]
where it is understood that $\Gamma_{Q}(x)=\emptyset$ if $x\notin Q$. 
Analogously, we can slightly fatten the Whitney boxes and use $I^{**}$ to define new fattened Whitney regions and sawtooth domains. More precisely, for every $Q\in\dd$,
\[
T_Q^*:=\interior\bigg(\bigcup_{Q'\in\mathbb{D}_Q}U_{Q'}^*\bigg),\quad\Omega^*_{\mathcal{F},Q}:=\interior\bigg(\bigcup_{Q'\in\mathbb{D}_{\F,Q}}U_{Q'}^*\bigg), \quad
\Gamma^*_{Q}(x) := \bigcup_{x\in Q'\in\mathbb{D}_{Q_0}}  U_{Q'}^*
\]
where $U_{Q}^*:=\bigcup_{I\in\mathcal{W}_Q^*}I^{**}$.
Similarly, we can define $T_Q^{**}$, $\Omega^{**}_{\mathcal{F},Q}$, $\Gamma_Q^{**}(x)$, and $U^{**}_{Q}$ by using $I^{***}$ in place of $I^{**}$.

Given $Q$ we next define the ``localized dyadic non-tangential maximal function''
\begin{equation}\label{def:NT}
\mathcal{N}_{Q}u(x) 
: = 
\sup_{Y\in \Gamma^*_{Q}(x)} |u(Y)|,
\qquad x\in \pom,
\end{equation}
for every $u\in C(T_{Q}^*)$, where it is understood that $\mathcal{N}_{Q}u(x)= 0$ for every $x\in\pom\setminus Q$ (since $\Gamma_Q^*(x)=\emptyset$ in such a case). 
Finally, let us introduce the ``localized  dyadic conical square function''
\begin{equation}\label{def:SF}
\mathcal{S}_{Q}u(x):=\bigg(\iint_{\Gamma_{Q}(x)}|\nabla u(Y)|^2\delta(Y)^{1-n}\,dY\bigg)^{\frac12}, \qquad x\in \pom,
\end{equation}
for every $u\in W^{1,2}_{\rm loc}  (T_{Q_0})$. Note that again $\mathcal{S}_{Q}u(x)=0$ for every $x\in\pom\setminus Q$.

To define  the ``Carleson box'' $T_\Delta$ associated with a surface ball $\Delta=\Delta(x,r)$, let $k(\Delta)$ denote the unique $k\in\mathbb{Z}$ such that $2^{-k-1}<200r\leq 2^{-k}$, and set
\begin{equation}\label{D-delta}
\mathbb{D}^{\Delta}:=\big\{Q\in\mathbb{D}_{k(\Delta)}:\:Q\cap 2\Delta\neq\emptyset\big\}.
\end{equation}
We then define
\begin{equation}
\label{def:T-Delta}
T_{\Delta}:=\interior\bigg(\bigcup_{Q\in\mathbb{D}^\Delta}\overline{T_Q}\bigg).
\end{equation}
We can also consider fattened versions of $T_\Delta$ given by
$$
T_{\Delta}^*:=\interior\bigg(\bigcup_{Q\in\mathbb{D}^\Delta}\overline{T_Q^*}\bigg),\qquad T_{\Delta}^{**}:=\interior\bigg(\bigcup_{Q\in\mathbb{D}^\Delta}\overline{T_Q^{**}}\bigg).
$$

Following \cite{HM1, HMT1}, one can easily see that there exist constants $0<\kappa_1<1$ and $\kappa_0\geq 16\Xi$ (with $\Xi$ the constant in \eqref{deltaQ}), depending only on the allowable parameters, so that
\begin{gather}\label{definicionkappa12}
\kappa_1B_Q\cap\Omega\subset T_Q\subset T_Q^*\subset T_Q^{**}\subset \overline{T_Q^{**}}\subset\kappa_0B_Q\cap\overline{\Omega}=:\tfrac{1}{2}B_Q^*\cap\overline{\Omega},
\\[6pt]
\label{definicionkappa0}
\tfrac{5}{4}B_\Delta\cap\Omega\subset T_\Delta\subset T_\Delta^*\subset T_\Delta^{**}\subset\overline{T_\Delta^{**}}\subset\kappa_0B_\Delta\cap\overline{\Omega}=:\tfrac{1}{2}B_\Delta^*\cap\overline{\Omega},
\end{gather}
and also
\begin{equation}\label{propQ0}
Q\subset\kappa_0B_\Delta\cap\partial\Omega=\tfrac{1}{2}B_\Delta^*\cap\partial\Omega=:\tfrac{1}{2}\Delta^*,\qquad\forall\,Q\in\mathbb{D}^{\Delta},
\end{equation}
where $B_Q$ is defined as in \eqref{deltaQ2}, $\Delta=\Delta(x,r)$ with $x\in\partial\Omega$, $0<r<\diam(\partial \Omega)$, and $B_{\Delta}=B(x,r)$ is so that $\Delta=B_\Delta\cap\partial\Omega$. From our choice of the parameters one also has that $B_Q^*\subset B_{Q'}^*$ whenever $Q\subset Q'$.

In the remainder of this section we show that if $\Omega$ is a 1-sided NTA domain satisfying the CDC then Carleson boxes and local and global sawtooth domains are also 1-sided NTA domains satisfying the CDC. We next present some of the properties of the capacity which will be used in our proofs. From the definition of capacity one can easily see that given a ball $B$ and compact sets $F_1\subset F_2\subset \overline{B}$ then
\begin{equation}\label{cap:prop1}
\Cap(F_1, 2B)\le \Cap (F_2, 2B).
\end{equation}
Also, given two balls $B_1\subset B_2$ and a compact set $F\subset \overline{B_1}$ then 
\begin{equation}\label{cap:prop2}
\Cap(F, 2B_2)\le \Cap (F, 2B_1).
\end{equation}
On the other hand, \cite[Lemma 2.16]{HKM} gives that if $F$ is a compact with $F\subset \overline{B}$ then there is a dimensional constant $C_n$ such that
\begin{equation}\label{cap:prop3}
C_n^{-1}\Cap(F, 2B)\le \Cap (F, 4B)\le \Cap (F, 2B).
\end{equation}

\begin{proposition}\label{prop:CDC-inherit}
	Let $\Omega\subset\mathbb{R}^{n+1}$, $n\ge 2$, be a 1-sided NTA domain satisfying the CDC. Then all of its Carleson boxes $T_Q$ and $T_\Delta$, and sawtooth regions $\Omega_\F$, and $\Omega_{\F,Q}$ are 1-sided NTA domains and satisfy the CDC with uniform implicit constants depending only on dimension and on the corresponding
	constants for $\Omega$.
\end{proposition}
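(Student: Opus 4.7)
The plan is to verify separately the three conditions — interior Corkscrew, Harnack chain, and CDC — for each of the regions $T_Q$, $T_\Delta$, $\Omega_{\mathcal{F}}$, $\Omega_{\mathcal{F},Q}$. Since all four have the form $\interior\bigl(\bigcup_{Q'\in\mathcal{A}}U_{Q'}\bigr)$ for an appropriate family $\mathcal{A}\subset\dd$ (possibly modified near the top for $T_\Delta$), I would treat them in a unified fashion, isolating the unified ``sawtooth'' $\Omega_\star$ and letting the specific choice of $\mathcal{A}$ enter only in the verification of the Corkscrew/Harnack parameters.

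For the 1-sided NTA property, the arguments are essentially geometric and already present, for the Ahlfors regular setting, in \cite{HM1,HMT1}. Given $x\in\partial\Omega_\star$ and $0<r<\diam(\partial\Omega_\star)$, I would locate a Whitney cube $I\in\mathcal{W}_\star:=\bigcup_{Q'\in\mathcal{A}}\mathcal{W}_{Q'}^*$ of side length comparable to $r$ and at distance $\lesssim r$ from $x$, using the structure \eqref{kstar_K0} and the fact that any point of $\partial\Omega_\star$ is either on $\partial\Omega$ (where the Whitney cubes ``stack down'' to the boundary) or on the boundary of some $I^*$ with $I\in\mathcal{W}_\star$. The center $X(I)$, slightly contracted if needed to sit inside $I^*\cap\Omega_\star$, then serves as a Corkscrew point for $\Omega_\star$ at scale $r$. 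For Harnack chains, given $X,X'\in\Omega_\star$ I would first use the local Harnack chain inside each fattened cube $I^*\in\mathcal{W}_\star$, then walk from one Whitney cube to an overlapping one through $I^*\cap J^*\neq\emptyset$, and control the number of hops by $\log_2\Pi$ with $\Pi$ as in \eqref{cond:Lambda}. The Ahlfors regularity of $\partial\Omega$ plays no role here, so the CDC hypothesis on $\Omega$ enters only through the Whitney construction, which has already been verified in \cite{HMT1} to go through under CDC alone.

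The substantive new content is the CDC for $\Omega_\star$. Fix $x\in\partial\Omega_\star$ and $0<r<\diam(\partial\Omega_\star)$; by \eqref{cap-Ball} it suffices to produce $\Cap(\overline{B(x,r)}\setminus\Omega_\star,B(x,2r))\gtrsim r^{n-1}$. I would split into two cases. Case 1: $x\in\partial\Omega$. Then $\overline{B(x,r)}\setminus\Omega\subset\overline{B(x,r)}\setminus\Omega_\star$, so \eqref{cap:prop1} and the CDC of $\Omega$ give the lower bound directly. Case 2: $x\in\partial\Omega_\star\setminus\partial\Omega$. Then $x\in\partial I^*$ for some $I\notin\mathcal{W}_\star$, and I would show that near $x$ the complement of $\Omega_\star$ contains a ``solid chunk'' of diameter $\gtrsim\min(r,\ell(I))$: when $r\lesssim\ell(I)$ this chunk is a ball contained in $\ree\setminus\Omega_\star$ inside a neighboring Whitney cube or a half of $I$ itself, and the capacity of a ball of radius $\rho$ inside $B(x,2\rho)$ is $\approx\rho^{n-1}$ by \eqref{cap-Ball} and \eqref{cap:prop1}--\eqref{cap:prop3}; when $r\gtrsim\ell(I)$ one reaches boundary points of $\partial\Omega$ at scale $\lesssim r$ and reduces to Case 1 via monotonicity.

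The hard part will be the interior-boundary case of the CDC. The geometric claim that, uniformly in the position of $x$ on a face of some $I^*$, the complement $\ree\setminus\Omega_\star$ contains a ball of radius $\gtrsim\min(r,\ell(I))$ requires careful bookkeeping with the fattening parameter $\lambda$ (so that adjacent $I^*, J^*$ with $I\in\mathcal{W}_\star$ and $J\notin\mathcal{W}_\star$ do not overlap into $\Omega_\star$), and with the possibility that at scales $r$ moderately larger than $\ell(I)$ one is straddling a transition region between Whitney generations. Once the solid chunk is exhibited, the capacity bound is routine from \eqref{cap:prop1}--\eqref{cap:prop3} and \eqref{cap-Ball}, and one concludes \eqref{eqn:CDC} for $\Omega_\star$ with a constant depending only on $n$, the 1-sided NTA constants of $\Omega$, and $c_1$.
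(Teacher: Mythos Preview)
Your approach is essentially the same as the paper's. The 1-sided NTA part is handled identically by deferring to \cite{HM1,HMT1}, and your CDC argument --- splitting according to whether $x\in\partial\Omega$ or $x\in\Omega\cap\partial\Omega_\star$, and in the latter case comparing $r$ to $\ell(I)\approx\delta(x)$ --- is exactly the paper's three-case split (the paper phrases it via a threshold $\delta(x)\lessgtr r/M$, which amounts to your $r\gtrless\ell(I)$ dichotomy). One small slip: when $x\in\partial\Omega_\star\setminus\partial\Omega$ you write ``$x\in\partial I^*$ for some $I\notin\mathcal{W}_\star$,'' but in fact $x\in\partial I^*$ with $I\in\mathcal{W}_\star$; the relevant exterior object is a Whitney cube $J\in\mathcal{W}\setminus\mathcal{W}_\star$ with $x\in J$ (not $\partial J^*$), and it is $\tau J\subset\Omega\setminus\Omega_\star$ that furnishes the solid chunk in the complement when $r\lesssim\ell(I)$. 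With that correction your sketch matches the paper's proof.
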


\begin{proof}
A careful examination of the proofs in \cite[Appendices A.1-A.2]{HM1} reveals that if $\Omega$ is a 1-sided NTA domain then all Carleson boxes $T_Q$ and $T_\Delta$, and local and global sawtooth domains $\Omega_{\F,Q}$ and $\Omega_\F$ inherit the interior Corkscrew and Harnack chain conditions, hence they are also 1-sided NTA domains. Therefore, we only need to prove the CDC. We are going to consider only the case $\Omega_{\F,Q}$ (which in particular gives the desired property for  $T_{Q}$ by allowing $\F$ to be the null set). The other proofs require minimal changes which are left to the interested reader. To this end, fix $Q\in\dd$ and $\F\subset\dd_{Q}$ a (possibly empty) family of pairwise disjoint dyadic cubes. Let $x\in\partial \Omega_{\F,Q}$ and $0<r<\diam(\Omega_{\F,Q})\approx \ell(Q)$. 
	
	\noindent {\bf Case 1:} $\delta(x)=0$. In that case we have that $x\in\pom$ and we can use that  $\Omega$ satisfies the CDC with constant $c_1$, \eqref{cap:prop1} and the fact that  $\Omega_{\F,Q}\subset\Omega$ to obtain the desired estimate
	\[
	c_1 r^{n-1}
	\lesssim 
	\Cap(\overline{B(x,r)}\setminus \Omega, B(x,2r))
	\le
	\Cap(\overline{B(x,r)}\setminus \Omega_{\F,Q}, B(x,2r)).
	\]

	\medskip
	
	\noindent {\bf Case 2:} $0<\delta(x)<r/M$ with $M$ large enough to be chosen. In this case $x\in\Omega\cap\partial\Omega_{\F,Q}$ and  hence there exist $Q'\in \dd_{\F,Q}$ and $I\in\W^*_{Q'}$ such that $x\in\partial I^*$. Note that by \eqref{kstar_K0}
	\[
	|x-x_{Q'}|
	\le
	\diam(I^*)+\dist(I,Q')+\diam(Q')
	\lesssim
	\ell(Q')
	\approx
	\ell(I)
	\approx
	\delta(x)
	\lesssim\frac{r}{M}.
	\]
	Let $Q''\in\dd_{Q}$ be such that $x_{Q'}\in Q''$ and  $\frac{r}{2M}\le \ell(Q'')<\frac{r}{M}<\ell(Q)$ provided that $M$ is taken large enough. If $Z\in B_{Q''}$ then taking $M$ large enough 
	\[
	|Z-x|
	\le
	|Z-x_{Q''}|+ |x_{Q''}-x_{Q'}|+|x_{Q'}-x|
	\lesssim
	\ell(Q'')+\frac{r}{M}
	\lesssim
	\frac{r}{M}
	<r
	\]
	and $B_{Q''}\subset B(x,r)$. On the other hand, if $Z\in B(x,2r)$, we analogously have provided $M$ is large enough
	\[
	|Z-x_{Q''}|
	\le 
	|Z-x|+ |x-x_{Q'}|+ |x_{Q'}-x_{Q''}|
	<
	2r+C\frac{r}{M}+ \Xi r_{Q''}
	<
	6M\Xi r_{Q''}
	\]
	and thus $B(x,2r)\subset 6M\Xi B_{Q''}$. Once $M$ has been fixed so that the previous estimates hold, we use them in conjunction  with the fact that $\Omega$ satisfies the CDC with constant $c_1$, \eqref{cap:prop1}--\eqref{cap:prop3}, and that $\Omega_{\F,Q}\subset\Omega$ to obtain
	\begin{multline*}
	\frac{c_1}{(2M\Xi)^{n-1}} r^{n-1}
	\le
	c_1 r_{Q''}^{n-1}
	\lesssim 
	\Cap(\overline{B_{Q''}}\setminus \Omega, 2 B_{Q''})
	\lesssim
	\Cap(\overline{B_{Q''}}\setminus \Omega, 6M\Xi B_{Q''})
	\\
	\le 
	\Cap(\overline{B_{Q''}}\setminus \Omega, B(x,2r))
	\le
	\Cap(\overline{B(x,r)}\setminus \Omega_{\F,Q}, B(x,2r)),
	\end{multline*}
	which gives us the desired lower bound in the present case. 
	
	\medskip

	\begin{figure}[!ht]
		\centering
		\begin{tikzpicture}[scale=.6]

		\begin{scope}[shift={(-5.25,0)}]
		\draw plot [smooth] coordinates {(-5,2)(-3,1.6) (-2,1.3)(-1,1) (0,1)(1,1)(2,1.4)(3,1.7)(5,2)};
		
		\draw (-3.8, 4)--(-3.8,2.6)--(-3.4,2.6)--(-3.4, 2.2)--(-3.2, 2.2)--(-3.2,2)--(-3,2)--(-1,1)--(1,1)--(2.6,2.2)--(2.8,2.2)--(2.8, 2.4)--(3,2.4)--(3,2.8)--(3.4,2.8)--(3.4,4)--cycle;

		\node[below] at (-5,2) {$\partial\Omega$};
		\draw[red,line width=0.5mm] (-1,1)--(1,1);
		\node[below, red] at (-1,1) {$Q$};

		\begin{scope}
		\node at (0,3) {$T_Q$};
		
		\draw (.5,1) circle (1cm);
		\node[circle, fill=black, inner sep=.1pt,minimum size=3pt, label=above:{$x$}] (a) at (.5,1) {};
		
		
		
		\begin{scope}
		
		\path[clip] (-3.8, 0)--(-3.8,2.6)--(-3.4,2.6)--(-3.4, 2.2)--(-3.2, 2.2)--(-3.2,2)--(-3,2)--(-1,1)--(1,1)--(2.6,2.2)--(2.8,2.2)--(2.8, 2.4)--(3,2.4)--(3,2.8)--(3.4,2.8)--(3.4,0)--cycle;

		\draw[fill=gray, opacity=.5] (.5,1) circle (1cm);
		
		
		\clip (.5,1) circle (1cm);
		
		\clip (-1,-1) rectangle (3,3) plot [smooth] coordinates {(-5,2)(-3,1.6) (-2,1.3)(-1,1) (0,1)(1,1)(2,1.4)(3,1.7)(5,2)};
		
		\draw[blue,line width=0.5mm] (-1,1)--(1,1);

		\draw[fill=purple,opacity=.5] (.5,1) circle (1cm);
		
		\end{scope}
		
		\end{scope}
		
		\end{scope}

		
		\begin{scope}[shift={(5.25,0)}]
		\draw plot [smooth] coordinates {(-5,2)(-3,1.6) (-2,1.3)(-1,1) (0,1)(1,1)(2,1.4)(3,1.7)(5,2)};
		\node[below] at (-5,2) {$\partial\Omega$};
		\draw[red,line width=0.5mm] (-1,1)--(1,1);
		\node[below, red] at (-1,1) {$Q$};

		\begin{scope}
		\draw (-3.8, 4)--(-3.8,2.6)--(-3.4,2.6)--(-3.4, 2.2)--(-3.2, 2.2)--(-3.2,2)--(-3,2)--(-1,1)--(1,1)--(2.6,2.2)--(2.8,2.2)--(2.8, 2.4)--(3,2.4)--(3,2.8)--(3.4,2.8)--(3.4,4)--cycle;
		
		\node at (0,3) {$T_Q$};
		\draw (1.3,1.25) circle (1.25cm);
		\node[circle, fill=black, inner sep=.1pt,minimum size=3pt, label=above:{$x$}] (a) at (1.3,1.25) {};

		\node[circle, fill=black, inner sep=.1pt,minimum size=3pt] (a) at (.55,1) {};
		
		\node at (1.3,.6) {\tiny $B_{Q''}$};
		
		\draw (.55,1) circle (.35cm);
		
		\begin{scope}
		
		\path[clip] (-3.8, 0)--(-3.8,2.6)--(-3.4,2.6)--(-3.4, 2.2)--(-3.2, 2.2)--(-3.2,2)--(-3,2)--(-1,1)--(1,1)--(2.6,2.2)--(2.8,2.2)--(2.8, 2.4)--(3,2.4)--(3,2.8)--(3.4,2.8)--(3.4,0)--cycle;
		\draw[fill=gray, opacity=.5] (1.3,1.25) circle (1.25cm);
		
		
		\clip (1.3,1.25) circle (1.25cm);
		\draw[blue,line width=0.5mm] (-1.1,1)--(1,1); 
		\draw[fill=purple, opacity=.5] (.55,1) circle (.35cm);

		\end{scope}
		
		\end{scope}
		
		\end{scope}

		\end{tikzpicture}
		\caption{\textbf{Case 1} and \textbf{Case 2} for $T_Q$.}
	\end{figure}
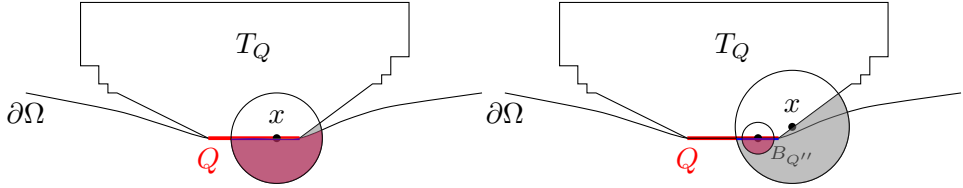

	\noindent {\bf Case 3:} $\delta(x)>r/M$. In this case $x\in\Omega\cap\partial \Omega_{\F,Q}$ and  hence there exists $Q'\in \dd_{\F,Q}$ and $I\in\W^*_{Q'}$ such that $x\in\partial I^*$ and $\interior(I^*)\subset \Omega_{\F,Q}$. Also there exists $J\in\W$, with $J\ni x$ such that $J\notin\W^*_{Q''}$ for any $Q''\in \dd_{\F,Q}$ which implies that $\tau J\subset \Omega\setminus \Omega_{\F,Q}$ for some $\tau\in(\frac12,1)$ (see Section \ref{subsection:sawtooth}). Note that $\ell(I)\approx\ell(J)\approx\delta(x)\gtrsim r$, and more precisely $r/M<\delta(x)<41\diam(J)$ by \eqref{constwhitney}.

	\begin{figure}[!ht]
		\centering
		\begin{tikzpicture}[scale=.7]

		\draw plot [smooth] coordinates {(-5,2)(-3,1.6) (-2,1.3)(-1.5,1) (0,1)(1.5,1)(2,1.4)(3,1.7)(5,2)};
		\node[below] at (-5,2) {$\partial\Omega$};
		\draw[red,line width=0.5mm] (-1.5,1)--(1.5,1);
		\node[below, red] at (-1,1) {$Q$};
		
		\node at (0,3) {$T_Q$};
		
		\node at (-3.7,2.4) {\tiny $B'$};

		\draw (-1.5,1)--(-2,1.5)--(-2,2)--(-2.5,2)--(-2.5,3)--(-3.5,3)--(-3.5,4)--(-4.5,4);
		\draw (1.5,1)--(2,2)--(2.5,2)--(2.5,2.5)--(3,2.5)--(3,3)--(4,3)--(4, 4);

		\node[circle, fill=black, inner sep=.1pt,minimum size=3pt, label=above:{\ $x$}] (a) at (-3,3) {};
		
		\draw (-3,3) circle (1.5cm);
		
		\draw[fill=purple, opacity=.8] (-3,2.5) circle (.33cm);

		
		\draw[red, line width=0.5mm] (-3.5, 3)--(-2.5,3);
		
		
		\begin{scope}
		
		\path[clip] (-1,1)--(-1.5,1)--(-2,2)--(-2.5,2)--(-2.5,3)--(-3.5,3)--(-3.5,4)--(-5,4)--(-5,0)--(-1,0);
		
		\draw[fill=gray, opacity=.5] (-3,3) circle (1.5cm);

		\end{scope}

		\end{tikzpicture}
		\caption{\textbf{Case 3} for $T_Q$.}
	\end{figure}
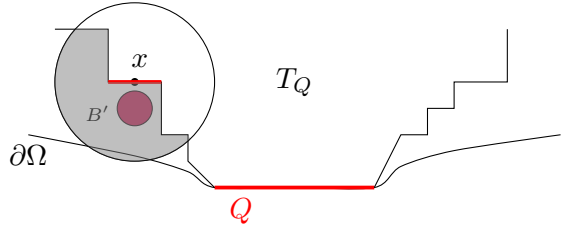

	Let $B'=B(x',s)$ with $s=r/(300M)$ and $x'$ being the point in the segment joining $x$ and the center of $J$ at distance $2s$ from $x$. It is easy to see that $B'\subset B(x,r)\subset B(x,2r)\subset 1000M B'$ and also $\overline{B'}\subset \interior(J)\setminus \Omega_{\F,Q}$. We can then use
	\eqref{cap-Ball} and \eqref{cap:prop1}--\eqref{cap:prop3} to obtain the desired estimate: 
	\begin{multline*}
	\frac1{(300M)^{n-1}} r^{n-1}
	=
	s^{n-1}
	\approx
	\Cap(\overline{B'}, 2 B')
	\lesssim
	\Cap(\overline{B'}, 1000M B')
	\\
	\le 
	\Cap(\overline{B'}, B(x,2r))
	\le
	\Cap(\overline{B(x,r)}\setminus \Omega_{\F,Q}, B(x,2r)).
	\end{multline*}

	Collecting the 3 cases and using \eqref{cap-Ball} we have been able to show that 
	\begin{equation}\label{eqn:CDC-TQ}
	\frac{\Cap(\overline{B(x,r)}\setminus\Omega_{\F,Q}, B(x,2r))}{\Cap(\overline{B(x,r)}, B(x,2r))} \gtrsim 1,
	\qquad \forall\, x\in\partial \Omega_{\F,Q}, \ 0<r<\diam(\Omega_{\F,Q}),
	\end{equation}
	which eventually gives that $\Omega_{\F,Q}$ satisfies the CDC. This completes the proof.
\end{proof}

Our next auxiliary result adapts \cite[Lemma 4.44]{HMT-NTA} to our current setting: 

\begin{lemma}\label{lemma:approx-saw}
	Let $\Omega\subset\mathbb{R}^{n+1}$ be a 1-sided NTA domain satisfying the CDC.	Given $Q_0\in\dd$ and $N\ge 4$
	consider the family of pairwise disjoint cubes $\mathcal{F}_N=\{Q\in\dd_{Q_0}: \ell(Q)=2^{-N}\,\ell(Q_0)\}$ and let $\Omega_N:=\Omega_{\F_N,Q_0}$ and $\Omega_N^*:=\Omega_{\F_N,Q_0}^*$. There exists $\Psi_N\in C_c^\infty(\ree)$ and a constant $C\ge 1$ depending only on dimension $n$, the 1-sided NTA constants, the CDC constant, and independent of $N$ and $Q_0$ such that the following hold:
	
	\begin{list}{$(\theenumi)$}{\usecounter{enumi}\leftmargin=.8cm
			\labelwidth=.8cm\itemsep=0.2cm\topsep=.1cm
			\renewcommand{\theenumi}{\roman{enumi}}}
		
		\item $C^{-1}\,\mathbf{1}_{\Omega_{N}}\le \Psi_N\le \mathbf{1}_{\Omega_{N}^{*}}$.
		
		\item $\sup_{X\in \Omega} |\nabla \Psi_N(X)|\,\delta(X)\le C$.
		
		\item Setting 
		\begin{equation}
			\label{eq:defi-WN}
			\W_N:=\bigcup_{Q\in\dd_{\F_{N},Q_0}} \W_Q^*,
			\quad
			\W_N^\Sigma:=
			\big\{I\in \W_{N}:\, \exists\,J\in \W\setminus \W_N\ \mbox{with}\ \partial I\cap\partial J\neq\emptyset
			\big\}.
		\end{equation}
	\end{list}
	one has 
	\begin{equation}
		\nabla \Psi_N\equiv 0
		\quad
		\mbox{in}
		\quad
		\bigcup_{I\in \W_N \setminus \W_N^\Sigma }I^{**}
		\label{eq:fregtgtr}
	\end{equation}
	and there exists a family $\{\widehat{Q}_I\}_{I\in 	\W_N^\Sigma}$ so that 
	\begin{equation}\label{new-QI}
		C^{-1}\,\ell(I)\le \ell(\widehat{Q}_I)\le C\,\ell(I),
		\qquad
		\dist(I, \widehat{Q}_I)\le C\,\ell(I),
		\qquad
		\sum_{I\in 	\W_N^\Sigma} \mathbf{1}_{\widehat{Q}_I} \le C.
	\end{equation}
\end{lemma}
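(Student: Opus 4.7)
The plan is to construct $\Psi_N$ as a truncated smooth partition of unity indexed by $\W_N$. For each $I\in\W$, fix $\varphi_I\in C_c^\infty(\ree)$ with $\mathbf{1}_{I^*}\le\varphi_I\le\mathbf{1}_{I^{**}}$ and $|\nabla\varphi_I|\lesssim\ell(I)^{-1}$. By the choice of $\lambda$ the sets $\{I^{**}\}_{I\in\W}$ have bounded overlap depending only on $n$, while $\{I^*\}$ covers $\Omega$, so $\Phi:=\sum_{I\in\W}\varphi_I$ satisfies $1\le\Phi\le M$ on $\Omega$ for some $M=M(n)$. Then $\tilde\varphi_I:=\varphi_I/\Phi$ is a smooth partition of unity subordinate to $\{I^{**}\}$ on $\Omega$, with $|\nabla\tilde\varphi_I|\lesssim\ell(I)^{-1}$. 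I then define
\[
\Psi_N:=\sum_{I\in\W_N}\tilde\varphi_I,
\]
extended by zero outside $\Omega$. Since $\dd_{\F_N,Q_0}$ is finite, $\W_N$ is finite and each $I^{**}$ lies compactly in $\Omega$ (by \eqref{constwhitney} and the choice of $\lambda$), so $\Psi_N\in C_c^\infty(\ree)$ with $\supp\Psi_N\subset\overline{\Omega_N^*}$ and $0\le\Psi_N\le 1$.

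Properties (i) and (ii) are immediate. If $X\in\Omega_N$ then $X\in I^*$ for some $I\in\W_N$, so $\Psi_N(X)\ge\tilde\varphi_I(X)\ge 1/M$; this yields (i). For (ii), only boundedly many $\tilde\varphi_I$ are nonzero at any given $X\in\Omega$, and each relevant $I$ obeys $\ell(I)\approx\delta(X)$, so $|\nabla\Psi_N(X)|\lesssim\delta(X)^{-1}$. For the first half of (iii), suppose $X\in I^{**}$ with $I\in\W_N\setminus\W_N^\Sigma$ and $\tilde\varphi_J(X)\neq 0$. Then $I^{**}\cap J^{**}\neq\emptyset$, which by the choice of $\lambda$ forces $I=J$ or $\partial I\cap\partial J\neq\emptyset$; since $I\notin\W_N^\Sigma$, every such $J$ lies in $\W_N$. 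Hence $\Psi_N(X)=\sum_{J\in\W}\tilde\varphi_J(X)=1$ on the whole $I^{**}$, so $\nabla\Psi_N\equiv 0$ on $\bigcup_{I\in\W_N\setminus\W_N^\Sigma}I^{**}$.

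For the family $\{\widehat{Q}_I\}_{I\in\W_N^\Sigma}$, for each $I\in\W_N^\Sigma$ I would pick some neighbor $J=J(I)\in\W\setminus\W_N$ with $\partial I\cap\partial J\neq\emptyset$. Since $J\notin\W_N$, the associated $Q_J$ lies outside $\dd_{\F_N,Q_0}$, which splits into two cases: (a)~$Q_J\not\subset Q_0$, or (b)~$Q_J\subset Q'$ for some $Q'\in\F_N$. In case (a) I choose $\widehat{Q}_I$ to be a dyadic cube of length $\approx\ell(J)$ at distance $\approx\ell(J)$ from $Q_0$, namely a canonical dyadic cousin of $Q_J$ on the far side of $\partial Q_0$ from $Q_0$; in case (b) I choose $\widehat{Q}_I$ to be a dyadic cube of length $\approx\ell(J)$ sitting inside $Q'$ at distance $\approx\ell(J)$ from $\partial Q'$. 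The Whitney properties $\ell(J)\approx\ell(I)$ and $\dist(I,J)\lesssim\ell(I)$ immediately yield the first two bullets of \eqref{new-QI}. For the overlap bound, fix $x\in\pom$; if $x\in Q_0$ only case (b) can contribute (and then $x$ lies in at most one $Q'\in\F_N$), while if $x\notin Q_0$ only case (a) can contribute. In case (a), $x\in\widehat{Q}_I$ forces $\dist(x,Q_0)\approx\ell(\widehat{Q}_I)\approx\ell(I)$, pinning $\ell(I)$ up to constants; in case (b), $\dist(x,\partial Q')\approx\ell(\widehat{Q}_I)\approx\ell(I)$ does the same. At each admissible scale the geometric doubling of $\pom$ together with the finite-to-one nature of $I\mapsto J(I)$ leaves only $O(1)$ admissible $I$, giving the desired pointwise bound $\sum_I\mathbf{1}_{\widehat{Q}_I}\le C$.

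The main obstacle is this last step. The naive choice $\widehat{Q}_I=Q_J$ would produce families nested against $\partial Q_0$ or $\bigcup_{Q'\in\F_N}\partial Q'$, accumulating there and violating the pointwise overlap bound. The shifted-cousin construction is designed precisely to prevent this by insisting on positive separation $\approx\ell(I)$ between $\widehat{Q}_I$ and the sawtooth skeleton, following the argument in \cite[Lemma 4.44]{HMT-NTA}; in the present setting the only role of the CDC is, via Proposition~\ref{prop:CDC-inherit}, to guarantee that $\Omega_N$ and $\Omega_N^*$ inherit the structural hypotheses underlying the Whitney and dyadic estimates used above.
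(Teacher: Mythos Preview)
Your construction of $\Psi_N$ and the verification of (i), (ii), and $\nabla\Psi_N\equiv 0$ on $\bigcup_{I\in\W_N\setminus\W_N^\Sigma}I^{**}$ are essentially identical to the paper's argument (the paper supports $\varphi_I$ in the slightly smaller $(1+\tfrac32\lambda)I$ so that $\Psi_N\le\mathbf 1_{\Omega_N^*}$ rather than $\mathbf 1_{\overline{\Omega_N^*}}$, but this is cosmetic).

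The genuine discrepancy is in the construction of $\{\widehat Q_I\}$ and the overlap argument. Your case~(b) is incoherent as written: if $Q_J\subset Q'\in\F_N$ then, since $I\in\W_{Q_I}^*$ with $Q_I\in\dd_{\F_N,Q_0}$, one has $\ell(J)\approx\ell(I)\approx\ell(Q_I)>2^{-N}\ell(Q_0)=\ell(Q')$, so $\ell(J)\approx\ell(Q')$ and there is no room to place a cube of side $\approx\ell(J)$ inside $Q'$ at distance $\approx\ell(J)$ from $\partial Q'$. Your scale-pinning via $\dist(x,\partial Q')$ therefore does not work; what actually happens is that the scale is \emph{already} pinned to $2^{-N}\ell(Q_0)$, and the paper simply takes $\widehat Q_I=Q_J$ here and bounds the overlap by noting $\dist(I,I')\lesssim 2^{-N}\ell(Q_0)$ whenever $Q_J\cap Q_{J'}\neq\emptyset$. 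Your claim that the naive choice $\widehat Q_I=Q_J$ \emph{always} accumulates is thus too strong. The paper in fact splits your case~(a) further into $Q_0\subset Q_J$ (where $\ell(I)\approx\ell(Q_0)$ and $\#$ is $O(1)$, so again $\widehat Q_I=Q_J$ works) and $Q_J\cap Q_0=\emptyset$. Only in this last case is a modification needed, and the paper takes $\widehat Q_I$ to be a small \emph{descendant} of $Q_J$ containing $x_{Q_J}$ (not a cousin), with the overlap controlled by the two-sided bound $\dist(x_{Q_J},Q_0)\approx\ell(Q_J)$; this is precisely the mechanism you describe heuristically, but your ``cousin on the far side'' does not implement it.
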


\begin{proof}
	We proceed as in \cite[Lemma 4.44]{HMT-NTA}. Recall that given $I$, any closed dyadic cube in $\ree$, we set $I^{*}=(1+\lambda)I$ and $I^{**}=(1+2\,\lambda)I$. Let us introduce $\widetilde{I^{*}}=(1+\frac32\,\lambda)I$ so that
	\begin{equation}
		I^{*}
		\subsetneq
		\interior(\widetilde{I^{*}})
		\subsetneq \widetilde{I^{*}}
		\subset
		\interior(I^{**}).
		\label{eq:56y6y6}
	\end{equation}
	Given $I_0:=[-\frac12,\frac12]^{n+1}\subset\ree$, fix $\phi_0\in C_c^\infty(\ree)$ such that
	$1_{I_0^{*}}\le \phi_0\le 1_{\widetilde{I_0^{*}}}$ and $|\nabla \phi_0|\lesssim 1$ (the implicit constant depends on the parameter $\lambda$). For every $I\in \W=\W(\Omega)$ we set $\phi_I(\cdot)=\phi_0\big(\frac{\,\cdot\,-X(I)}{\ell(I)}\big)$ so that $\phi_I\in C^\infty(\ree)$, $1_{I^{*}}\le \phi_I\le 1_{\widetilde{I^{*}}}$ and $
	|\nabla \phi_I|\lesssim \ell(I)^{-1}$ (with implicit constant depending only on $n$ and $\lambda$).
	
	For every $X\in\Omega$, we let $\Phi(X):=\sum_{I\in \W} \phi_I(X)$. It then follows that $\Phi\in C_{\rm loc}^\infty(\Omega)$ since for every compact subset of $\Omega$, the previous sum has finitely many non-vanishing terms. Also, $1\le \Phi(X)\le C_{\lambda}$ for every $X\in \Omega$ since the family $\{\widetilde{I^{*}}\}_{I\in \W}$ has bounded overlap by our choice of $\lambda$. Hence we
	can set $\Phi_I=\phi_I/\Phi$ and one can easily see that $\Phi_I\in C_c^\infty(\ree)$, $C_\lambda^{-1}1_{I^{*}}\le \Phi_I\le 1_{\widetilde{I^{*}}}$ and $
	|\nabla \Phi_I|\lesssim \ell(I)^{-1}$. With this in hand set 
	\[
	\Psi_N(X)
	:=
	\sum_{I\in \W_N} \Phi_I(X)
	=
	\frac{\sum\limits_{I\in \W_N} \phi_I(X)}{\sum\limits_{I\in \W} \phi_I(X)},
	\qquad
	X\in\Omega.
	\]
	We first note that the number of terms in the sum defining $\Psi_N$ is bounded depending on $N$. Indeed, if $Q\in \dd_{\F_N, Q_0}$ then $Q\in \dd_{Q_0}$ and $2^{-N}\ell(Q_0)<\ell(Q)\le \ell(Q_0)$ which implies that $\dd_{\F_N, Q_0}$ has finite cardinality with bounds depending only on dimension and $N$ (here we recall that the number of dyadic children of a given cube is uniformly controlled). Also, by construction $\W_Q^*$ has cardinality depending only on the allowable parameters. Hence, $\# \W_N\lesssim C_N<\infty$. This and the fact that each $\Phi_I\in C_c^\infty(\ree)$ yield that $\Psi_N\in C_c^\infty(\ree)$. Note also that \eqref{eq:56y6y6} and the definition of $\W_N$  give
	\begin{multline*}
		\supp \Psi_I
		\subset
		\bigcup_{I\in \W_N} \widetilde{I^{*}}
		=
		\bigcup_{Q\in\dd_{\F_{N},{Q}_0}}
		\bigcup_{I\in \W_Q^*} \widetilde{I^{*}}
		\subset
		\interior\Big(
		\bigcup_{Q\in\dd_{\F_{N},{Q}_0}}
		\bigcup_{I\in \W_Q^*} I^{**}
		\Big)
		\\
		=
		\interior\Big(
		\bigcup_{Q\in\dd_{\F_{N},{Q}_0}}
		U_Q^{*}
		\Big)
		=
		\Omega_{N}^{*}
	\end{multline*}
	This, the fact that $\W_N\subset \W$, and the definition of $\Psi_N$ immediately give that
	$\Psi_N\le \mathbf{1}_{\Omega_{N}^{*}}$. On the other hand if $X\in \Omega_N=\Omega_{\F_{N},Q_0}$ then the exists $I\in \W_N$ such that $X\in I^{*}$ in which case $\Psi_N(X)\ge \Phi_I(X)\ge C_\lambda^{-1}$. All these imply $(i)$. 
	Note that $(ii)$  follows by observing that for every $X\in \Omega$
	$$
	|\nabla \Psi_N(X)|
	\le
	\sum_{I\in \W_N} |\nabla\Phi_I(X)|
	\lesssim
	\sum_{I\in \W} \ell(I)^{-1}\,1_{\widetilde{I^{*}}}(X)
	\lesssim
	\delta(X)^{-1}
	$$
	where we have used that if $X\in \widetilde{I^{*}}$ then $\delta(X)\approx \ell(I)$ and also that the family $\{\widetilde{I^{*}}\}_{I\in \W}$ has bounded overlap.
	
	To see $(iii)$ fix $I\in\W_N\setminus \W^{\Sigma}_N$ and  $X\in I^{**}$, and set $\W_X:=\{J\in \W: \phi_J(X)\neq 0\}$ so that $I\in \W_X$. We first note that  $\W_X\subset \W_N$. Indeed, if $\phi_J(X)\neq 0$ then $X\in \widetilde{J^{*}}$.
	Hence $X\in I^{**}\cap J^{**}$ and our choice of $\lambda$ gives that $\partial I$ meets $\partial J$, this in turn implies that $J\in \W_N$ since $I\in\W_N\setminus \W^{\Sigma}_N$. All these yield
	$$
	\Psi_N(X)
	=
	\frac{\sum\limits_{J\in \W_N} \phi_J(X)}{\sum\limits_{J\in \W} \phi_J(X)}
	=
	\frac{\sum\limits_{J\in \W_N\cap \W_X} \phi_J(X)}{\sum\limits_{J\in \W_X} \phi_J(X)}
	=
	\frac{\sum\limits_{J\in \W_N\cap \W_X} \phi_J(X)}{\sum\limits_{J\in \W_N\cap \W_X} \phi_J(X)}
	=
	1.
	$$
	Hence $\Psi_N\big|_{I^{**}}\equiv 1$  for every $I\in\W_N\setminus \W^{\Sigma}_N$. This and the fact that $\Psi_N\in C_c^\infty(\ree)$ immediately give that $\nabla \Psi_N\equiv 0$ in $\bigcup_{I\in \W_N \setminus \W_N^\Sigma }I^{**}$.

	We are left with showing the last part of $(iv)$ and for that we borrow some ideas from \cite[Appendix A.2]{HMM-UR}. Fix $I\in  \W_N^\Sigma$ and let $J$ be so that $J\in \W\setminus \W_N$ with $\partial I\cap\partial J\neq\emptyset$, in particular $\ell(I)\approx\ell(J)$. Since $I\in  \W_N^\Sigma$ there exists $Q_I\in\dd_{\F_N,Q_0}$ (that is, $Q_I\subset Q_0$ with $2^{-N}\,\ell(Q_0)<\ell(Q_I)\le\ell(Q_0)$ so that $I\in \W_{Q_I}^*$). Pick $Q_J\in\dd$ so that $\ell(Q_J)=\ell(J)$ and it contains any fixed $\widehat{y}\in\partial\Omega$ such that $\dist(J,\partial\Omega)=\dist(J,\widehat{y})$. Then, as observed in Section \ref{subsection:sawtooth}, one has $J\in \W_{Q_J}^*$.
	But, since $J\in \W\setminus \W_N$, we necessarily have $Q_J\notin \dd_{\F_N,Q_0}=\dd_{\F_N}\cap \dd_{Q_0}$. Hence, $\W_N^\Sigma=\W_N^{\Sigma,1}\cup \W_N^{\Sigma,2}\cup \W_N^{\Sigma,3}$ where
	\begin{align*}
		\W_N^{\Sigma,1}:&=\{I\in \W_N^{\Sigma}: Q_0\subset Q_J\},
		\\
		\W_N^{\Sigma,2}:&=\{I\in \W_N^{\Sigma}: Q_J\subset Q_0,\ \ell(Q_J)\le 2^{-N}\,\ell(Q_0)\},
		\\
		\W_N^{\Sigma,3}:&=\{I\in \W_N^{\Sigma}: Q_J\cap Q_0=\emptyset\}.
	\end{align*}
	For later use it is convenient to observe that 
	\begin{equation}\label{dist:QJ-I}
		\dist(Q_J,I)\le \dist(Q_J,J) +\diam(J)+\diam(I)\approx\ell(J)+\ell(I)\approx\ell(I).
	\end{equation}

	Let us first consider $\W_N^{\Sigma,1}$. If $I\in \W_N^{\Sigma,1}$ we clearly have
	\[
	\ell(Q_0)\le \ell(Q_J)=\ell(J)\approx\ell(I)\approx\ell(Q_I)\le \ell(Q_0)
	\]
	and since $Q_I\in\dd_{Q_0}$
	\[
	\dist(I,x_{Q_0})
	\le
	\dist(I,Q_I)+ \diam(Q_I)
	\approx \ell(I).
	\]
	In particular, $\# \W_N^{\Sigma,1}\lesssim 1$. Thus if we set $\widehat{Q}_I:=Q_J$ it follows from \eqref{dist:QJ-I} that the two first conditions in \eqref{new-QI} hold and also $\sum_{I\in 	\W_N^{\Sigma,1}} \mathbf{1}_{\widehat{Q}_I} \le
	\# \W_N^{\Sigma,1}\lesssim 1$.

	Consider next $\W_N^{\Sigma,2}$. For any $I\in \W_N^{\Sigma,2}$ we also set $\widehat{Q}_I:=Q_J$ so that from \eqref{dist:QJ-I} we clearly see that the two first conditions in \eqref{new-QI} hold. It then remains to estimate the overlap. With this goal in mind we first note that if $I\in \W_N^{\Sigma,2}$, the fact that  $Q_I\in\dd_{\F_N,Q_0}$ yields
	\[
	2^{-N}\,\ell(Q_0)
	<
	\ell(Q_I)
	\approx
	\ell(I)
	\approx
	\ell(J)
	\approx
	\ell(Q_J)
	\le
	2^{-N}\,\ell(Q_0),
	\]
	hence $\ell(I)\approx 2^{-N}\,\ell(Q_0)$. Suppose next that $Q_J\cap Q_J'=\widehat{Q}_I\cap \widehat{Q}_I\neq\emptyset$ for $I, I'\in \W_N^{\Sigma,2}$. Then since $I$ touches $J$ and $I'$ touches $J'$
	\begin{multline*}
		\dist(I,I')
		\le
		\diam(J)+\dist(J, Q_J)+\diam(Q_J)+\diam(Q_J')+\diam(J')
		\\
		\approx
		\ell(J)+\ell(J')
		\approx
		2^{-N}\,\ell(Q_0).
	\end{multline*}
	Hence fixed  $I\in \W_N^{\Sigma,2}$ there is a uniformly bounded number of $I'\in \W_N^{\Sigma,2}$ with $\widehat{Q}_I\cap \widehat{Q}_{I'}\neq\emptyset$, and, in particular,
	$\sum_{I\in 	\W_N^{\Sigma,2}} \mathbf{1}_{\widehat{Q}_I} \lesssim 1$.

	We finally take into consideration the most delicate collection $\W_N^{\Sigma,3}$. In this case for every $I\in \W_N^{\Sigma,3}$ we pick $\widehat{Q}_I\in\dd$ so that $ \widehat{Q}_I\ni x_{Q_J}$ and $\ell(\widehat{Q}_I)=2^{-M'}\,\ell(Q_J)$ with $M'\ge 3$ large enough so that $2^{M'}\ge 2 \Xi^2$ (cf. \eqref{deltaQ}). Note that since $M'\ge 3$  we have that $\widehat{Q}_I\subset Q_J$ which, together with \eqref{dist:QJ-I}, implies
	\[
	\dist(I,\widehat{Q}_I)
	\le
	\dist(I,Q_J)
	+\diam(Q_J)
	\lesssim 
	\ell(I).
	\]
	Hence the first two conditions in \eqref{new-QI} hold in the current situation. 
	
	On the other hand, the choice of $M'$ and \eqref{deltaQ} guarantee that
	\begin{equation}\label{4q43tfg3f}
		\diam(\widehat{Q}_I)
		\le
		2\,\Xi\,r_{\widehat{Q}_I}
		\le
		2\,\Xi\,\ell(\widehat{Q}_I)
		=
		2^{-M'+1}\,\Xi\,\ell(Q_J)
		\le
		\Xi^{-1}\,\ell(Q_J).
	\end{equation}
	Also, since $2\Delta_{Q_J}\subset Q_J$, it follows that $Q_0\cap2\Delta_{Q_J}=\emptyset$ and therefore $2\Xi^{-1}\,\ell(Q_J)\le \dist(x_{Q_J}, Q_0)$. Besides, since $Q_I\subset Q_0$
	\begin{multline*}
		\dist(x_{Q_J}, Q_0)
		\le
		\diam(Q_J)+\dist(Q_J, J)+\diam(J)
		\\
		+\diam(I)+\dist(I,Q_I)+\diam(Q_I)
		\approx
		\ell(J)\approx\ell(I).
	\end{multline*}
	Thus, $2\,\Xi^{-1}\,\ell(Q_J)\le \dist(x_{Q_J}, Q_0) \le C\,\ell(J)$. Suppose next that $I, I'\in \W_N^{\Sigma,3}$ are so that $\widehat{Q}_I\cap\widehat{Q}_{I'}\neq\emptyset$ and assume without loss of generality that $\widehat{Q}_{I'}\subset\widehat{Q}_{I}$, hence $\ell(J')\le \ell(J)$. Then, since 
	$x_{Q_J}\in \widehat{Q}_I$ and $x_{Q_{J'}}\in \widehat{Q}_{I'} \subset \widehat{Q}_I$ we get from \eqref{4q43tfg3f}
	\begin{multline*}
		2\,\Xi^{-1}\,\ell(Q_{J})
		\le 
		\dist(x_{Q_{J}}, Q_0)
		\le
		|x_{Q_{J}}- x_{Q_{J'}}|+
		\dist(x_{Q_{J'}}, Q_0)
		\\
		\le
		\diam(\widehat{Q}_{I})+C\ell(J')
		\le
		\Xi^{-1}\,\ell(Q_J)+C\ell(J')
	\end{multline*}
	and therefore $\Xi^{-1}\,\ell(Q_{J})\le C\,\ell(J)$ which in turn gives $\ell(I)\approx \ell(J)\approx \ell(J')\approx\ell(I')$. Note also that since $I$ touches $J$, $I'$ touches $J'$, and $\widehat{Q}_I\cap\widehat{Q}_{I'}\neq\emptyset$ we obtain
	\begin{multline*}
		\dist(I,I')
		\le
		\diam(J)+\dist(J, Q_J)+\diam(Q_J)+\diam(Q_{J'})
		\\+\dist(Q_{J'}, J')+\diam(J')
		\approx
		\ell(J)+\ell(J')\approx\ell(I).
	\end{multline*}
	Consequently, fixed  $I\in \W_N^{\Sigma,3}$ there is a uniformly bounded number of $I'\in \W_N^{\Sigma,3}$ with $\widehat{Q}_I\cap \widehat{Q}_{I'}\neq\emptyset$. As a result, $\sum_{I\in 	\W_N^{\Sigma,3}} \mathbf{1}_{\widehat{Q}_I} \lesssim 1$.  This clearly completes the proof of $(iii)$ and hence that of Lemma \ref{lemma:approx-saw}.
\end{proof}

\subsection[Elliptic operators, elliptic measure and the Green function]{Uniformly elliptic operators, elliptic measure and the Green function}
Next, we recall several facts concerning elliptic measure and the Green functions. To set the stage let $\Omega\subset\re^{n+1}$ be an open set. Throughout we consider  
elliptic operators $L$ of the form $Lu=-\div(A\nabla u)$ with $A(X)=(a_{i,j}(X))_{i,j=1}^{n+1}$ being a real (non-necessarily symmetric) matrix such that $a_{i,j}\in L^{\infty}(\Omega)$ and there exists $\Lambda\geq 1$ such that the following uniform ellipticity condition holds 
\begin{align}
\label{e:elliptic}
\Lambda^{-1} |\xi|^{2} \leq A(X) \xi \cdot \xi,
\qquad\qquad
|A(X) \xi \cdot\eta|\leq \Lambda |\xi|\,|\eta| 
\end{align}
for all $\xi,\eta \in\mathbb{R}^{n+1}$ and for almost every $X\in\Omega$. We write $L^\top$ to denote the transpose of $L$, or, in other words, $L^\top u = -\div(A^\top
\nabla u)$ with $A^\top$ being the transpose matrix of $A$.

We say that $u$ is a weak solution to $Lu=0$ in $\Omega$ provided that $u\in W_{\rm loc}^{1,2}(\Omega)$ satisfies
\[
\iint A(X)\nabla u(X)\cdot \nabla\phi(X) dX=0  \quad\mbox{whenever}\,\, \phi\in C^{\infty}_{0}(\Omega).
\]
Associated with $L$ one can construct an elliptic measure $\{\omega_L^X\}_{X\in\Omega}$ and a Green function $G_L$ (see \cite{HMT1} for full details). Sometimes, in order to emphasize the dependence on $\Omega$, we will write $\omega_{L,\Omega}$ and $G_{L,\Omega}$. If $\Omega$ satisfies the CDC then it follows that all boundary points are Wiener regular and hence for a given $f\in C_c(\partial\Omega)$ we can define
\[
u(X)=\int_{\partial\Omega} f(z)d\omega^{X}_{L}(z), \quad \mbox{whenever}\, \, X\in\Omega,
\]
so that  $u\in W^{1,2}_{\rm loc}(\Omega)\cap C(\overline{\Omega})$ satisfies $u=f$ on $\partial\Omega$ and $Lu=0$ in the weak sense in $\Omega$. Moreover, if $\Omega$ is bounded and $f\in \Lip(\Omega)$ then $u\in W^{1,2}(\Omega)$. In the same context the Green function satisfies the following properties which will be used along the paper: 
\begin{gather}\label{sizestimate}
	0\le G_L(X,Y)\leq C|X-Y|^{1-n},\quad\forall X,Y\in\Omega,\quad X\neq Y;
	\\[0.15cm] 
	G_L(\cdot,Y)\in  W_{\rm loc}^{1,2}(\Omega\setminus\{Y\})\cap C\big(\overline{\Omega}\setminus\{Y\}\big)\quad\text{and}\quad G_L(\cdot,Y)|_{\partial\Omega}\equiv 0\quad\forall Y\in\Omega;
	\\[0.15cm]\label{G-G-top}
	G_L(X,Y)=G_{L^\top}(Y,X),\quad\forall X,Y\in\Omega,\quad X\neq Y;
	\\[0.15cm]
	\label{eq:G-delta}
	\iint_{\Omega}A(X)\nabla_X G_L(X,Y)\cdot\nabla\varphi(X)\,dX=\varphi(Y),\qquad\forall\, \varphi\in  C_c^{\infty}(\Omega).
\end{gather}

We first define the reverse Hölder class and the $A_\infty$ classes with respect to fixed elliptic measure in $\Omega$.  One reason we take this approach is that we do not know whether $\mathcal{H}^{n}|_{\partial\Omega}$ is well-defined since we do not assume any Ahlfors regularity. Hence we have to develop these notions in terms of elliptic measures. To this end, let $\Omega$ satisfy the CDC and let $L_0$ and $L$ be two real (non-necessarily symmetric) elliptic operators associated with $L_0u=-\div(A_0\nabla u)$ and $L u=-\div(A\nabla u)$ where $A$ and $A_0$ satisfy \eqref{e:elliptic}. Let $\omega^{X}_{L_0}$ and $\omega_{L}^{X}$ be the elliptic measures of $\Omega$ associated with the operators $L_0$ and $L$ respectively with pole at $X\in\Omega$. Note that if we further assume that $\Omega$ is connected then $\omega_{L}^{X}\ll\omega_L^{Y}$ on $\pom$ for every $X,Y\in\Omega$. Hence if $\omega_L^{X_0}\ll\omega_{L_0}^{Y_0}$ on $\pom$  for some $X_0,Y_0\in\Omega$ then $\omega_L^{X}\ll\omega_{L_0}^{Y}$ on $\pom$ for every $X,Y\in\Omega$ and thus  we can simply write $\omega_{L}\ll \omega_{L_0}$ on $\pom$. In the latter case we will use the notation
\begin{equation}\label{def-RN}
h(\cdot\,;L, L_0, X)=\frac{d\omega_L^{X}}{d\omega_{L_0}^{X}}
\end{equation}
to denote the Radon-Nikodym derivative of $\omega_{L}^{X}$ with respect to $\omega_{L_0}^{X}$,
which is a well-defined function $\omega_{L_0}^{X}$-almost everywhere on $\pom$.

\begin{definition}[Reverse Hölder and $A_\infty$ classes]\label{d:RHp}
	Fix $\Delta_0=B_0\cap \pom$ where $B_0=B(x_0,r_0)$ with $x_0\in\pom$ and $0<r_0<\diam(\pom)$. Given $p$, $1<p<\infty$, we say that $\omega_L\in RH_p(\Delta_0,\omega_{L_0})$, provided that $\omega_L\ll \omega_{L_0}$ on $\Delta_0$, and there exists $C\geq 1$ such that 
\begin{equation*}
	\left(\aver{\Delta}h(y;L,L_0,X_{\Delta_0} )^p d \omega_{L_0}^{X_{\Delta_0}}(y)\right)^{\frac1p} 
	\leq 
	C 
	\aver{\Delta} h(y;L,L_0,X_{\Delta_0} ) d \omega_{L_0}^{X_{\Delta_0}}(y)
	=
	C\frac{\omega_L^{X_{\Delta_0}}(\Delta)}{\omega_{L_0}^{X_{\Delta_0}}(\Delta)},
\end{equation*}
	for every $\Delta=B\cap \partial\Omega$ where $B\subset B(x_0,r_0)$, $B=B(x,r)$ with  $x\in \partial\Omega$, $0<r<\diam(\partial\Omega)$. The infimum of the constants $C$ as above is denoted by $[\omega_{L}]_{RH_p(\Delta_0,\omega_{L_0})}$. 
	
	Similarly, we say that $\omega_L\in RH_p(\pom,\omega_{L_0})$ provided that for every $\Delta_0=\Delta(x_0,r_0)$ with $x_0\in\pom$ and $0<r_0<\diam(\pom)$ one has $\omega_L\in RH_p(\Delta_0,\omega_{L_0})$ uniformly on $\Delta_0$, that is, 
	\[
	[\omega_{L}]_{RH_p(\pom,\omega_{L_0})}
	:=\sup_{\Delta_0} [\omega_{L}]_{RH_p(\Delta_0,\omega_{L_0})}<\infty.
	\]

	Finally,
	\[
	A_\infty(\Delta_0,\omega_{L_0})=\bigcup_{p>1} RH_p(\Delta_0,\omega_{L_0})
	\quad\mbox{and}\quad
	A_\infty(\partial\Omega,\omega_{L_0})=\bigcup_{p>1} RH_p(\partial\Omega,\omega_{L_0})
	.\]
\end{definition}

The following result lists a number of properties which will be used throughout the paper, proofs may be found in \cite{HMT1}: 

\begin{lemma}\label{lemma:proppde}
	Suppose that $\Omega\subset\re^{n+1}$, $n\ge 2$, is a 1-sided NTA domain satisfying the CDC. Let $L_0=-\div(A_0\nabla)$ and $L=-\div(A\nabla)$ be two real (non-necessarily symmetric) elliptic operators, there exist $C_1\ge 1$, $\rho\in (0,1)$ (depending only on dimension, the 1-sided NTA constants, the CDC constant, and the ellipticity of $L$) and $C_2\ge 1$ (depending on the same parameters and on the ellipticity of $L_0$), such that for every $B_0=B(x_0,r_0)$ with $x_0\in\partial\Omega$, $0<r_0<\diam(\partial\Omega)$, and $\Delta_0=B_0\cap\partial\Omega$ we have the following properties:
	\begin{list}{$(\theenumi)$}{\usecounter{enumi}\leftmargin=1cm \labelwidth=1cm \itemsep=0.1cm \topsep=.2cm \renewcommand{\theenumi}{\alph{enumi}}}
		
		\item $\omega_L^Y(\Delta_0)\geq C_1^{-1}$ for every $Y\in C_1^{-1}B_0\cap\Omega$ and $\omega_L^{X_{\Delta_0}}(\Delta_0)\ge C_1^{-1}$.

		\item If $B=B(x,r)$ with $x\in\partial\Omega$ and $\Delta=B\cap\partial\Omega$ is such that $2B\subset B_0$, then for all $X\in\Omega\setminus B_0$ we have that $
	{C_1^{-1}}\omega_L^X(\Delta)\leq r^{n-1} G_L(X,X_\Delta)\leq C_1\omega_L^X(\Delta)$.

		\item  If $X\in\Omega\setminus 4B_0$, then $ \omega_{L}^X(2\Delta_0)\leq C_1\omega_{L}^X(\Delta_0)$.

		\item  For every $X\in\Omega\setminus 2\kappa_0B_0$ with $\kappa_0$ as in \eqref{definicionkappa0}, we have that
		\[
		\frac1C_1 \frac1{\omega_L^X(\Delta_0)}\le  \frac{d\omega_L^{{X_{\Delta_0}}}}{d\omega_L^X}(y)\le C_1 \frac1{\omega_L^X(\Delta_0)},
		\qquad\mbox{for $\omega_L^X$-a.e. $y\in\Delta_0$}.
		\]
		
%
%
%
%
%
		\item For every $X\in B_0\cap\Omega$ and for any $j\ge 1$
		\[
		\frac{d\omega_L^X}{d\omega_L^{X_{2^j\Delta_0}}}(y)\le C_1\,\bigg(\frac{\delta(X)}{2^j\,r_0}\bigg)^{\rho},
		\qquad\mbox{for $\omega_L^X$-a.e. $y\in\pom\setminus 2^j\,\Delta_0$}.
		\]
	\end{list}
\end{lemma}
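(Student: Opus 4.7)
The plan is to follow the classical Caffarelli--Fabes--Mortola--Salsa blueprint, now carried out in the 1-sided NTA setting with only the CDC (no Ahlfors regularity) as in \cite{HMT1}. The central tool made available by the CDC is a quantitative boundary Hölder continuity estimate for nonnegative weak solutions vanishing on a portion of $\pom$: if $u\in W^{1,2}(B(x,2r)\cap\Omega)$ satisfies $Lu=0$, $u\ge 0$, and $u\equiv 0$ on $\Delta(x,2r)$ in the Sobolev sense, then
$$
u(Y)\le C\Big(\tfrac{|Y-x|}{r}\Big)^{\rho}\,\sup_{B(x,r)\cap\Omega}u,\qquad Y\in B(x,r/2)\cap\Omega,
$$
with $\rho\in(0,1)$ and $C$ depending only on the allowable parameters and on the ellipticity of $L$. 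This is obtained by combining the standard De Giorgi--Nash--Moser oscillation decay for $L$-solutions with the lower capacitary bound \eqref{eqn:CDC} through a Moser-type iteration (in the style of \cite{L88,HKM}); it replaces the role played by exterior corkscrews in the classical NTA case, which is precisely why $\rho$ and the constants in (b)--(e) that depend on $L_0$ ultimately come through the ellipticity of the operator whose boundary Hölder estimate is invoked.

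With this in hand I would derive the items in the stated order. For (a), apply the Hölder estimate to $v(Y):=1-\omega_L^Y(\Delta_0)$, a nonnegative $L$-solution in $\Omega$ vanishing on $\Delta_0$: evaluation at the corkscrew $X_{\Delta_0}$ yields $\omega_L^{X_{\Delta_0}}(\Delta_0)\ge C^{-1}$, and the Harnack chain propagates this bound to any $Y\in C_1^{-1}B_0\cap\Omega$. For (b), pair the boundary Hölder estimate for $G_L(\cdot,X_\Delta)$ on $\Omega\setminus B_0$ (which vanishes on $\pom\setminus B$) with the pointwise size bound \eqref{sizestimate} and the maximum principle on the region $B_0\setminus\overline{2B}$; the symmetry \eqref{G-G-top} allows the pole to be swapped when convenient. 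For (c), use (b) with poles outside $4B_0$ to write
$$
\omega_L^X(2\Delta_0)\approx r_0^{n-1}G_L(X,X_{2\Delta_0})\approx r_0^{n-1}G_L(X,X_{\Delta_0})\approx \omega_L^X(\Delta_0),
$$
where the middle $\approx$ is a Harnack chain between $X_{2\Delta_0}$ and $X_{\Delta_0}$, both interior with comparable distance to $\pom$. For (d), rewrite $\omega_L^{X_{\Delta_0}}(\Delta)/\omega_L^X(\Delta)$ for $\Delta\subset\Delta_0$ small via (b) as a quotient of Green functions $G_L(X_{\Delta_0},X_\Delta)/G_L(X,X_\Delta)$; by the boundary comparison principle (itself a consequence of (b) and the boundary Hölder estimate) this is essentially independent of $\Delta$, hence equal, up to constants, to $1/\omega_L^X(\Delta_0)$ via (a), after which (c) and Lebesgue differentiation on the space of homogeneous type $(\pom,\omega_L^X)$ yield the pointwise statement. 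For (e), apply the boundary Hölder estimate to $Y\mapsto \omega_L^Y(E)$, $E\subset\pom\setminus 2^j\Delta_0$ Borel (which vanishes on $2^{j-1}\Delta_0$), on the scale $2^jr_0$, then divide by $\omega_L^{X_{2^j\Delta_0}}(E)$ using Harnack and differentiate.

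The main obstacle is that the standard CFMS template often invokes Ahlfors regularity (or an exterior corkscrew) to turn capacity estimates into surface-measure estimates and to close the comparison arguments on annuli. Here one must work directly with \eqref{eqn:CDC} and \eqref{cap-Ball}, verify that the boundary Hölder estimate and the subsequent boundary comparison principle can be run on $B_0\setminus\overline{2B}$ without any exterior corkscrew, and track the fact that Harnack chains exist with uniform control (guaranteed by the 1-sided NTA assumption). All of these steps are carried out, with full detail, in \cite{HMT1}, whose arguments the proof transcribes essentially verbatim, keeping track only of whether the constants depend on the ellipticity of $L$ alone or also on that of $L_0$ (the latter being the case exactly in those items where the boundary Hölder estimate is applied to $L_0$-solutions via the change-of-pole interpretation).
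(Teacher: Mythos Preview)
Your sketch is correct and aligned with the paper, which does not give its own proof but simply refers to \cite{HMT1}; the CFMS-style argument you outline (boundary Hölder decay from the CDC, Bourgain's estimate, the Green-function/elliptic-measure comparison, doubling, change of pole, and decay of the kernel via Hölder plus Carleson's estimate) is exactly the content of \cite{HMT1}. One small remark: the constant $C_2$ depending on the ellipticity of $L_0$ is declared in the statement but is never used in items (a)--(e), so your effort to track where the $L_0$-dependence enters is chasing a vestigial artifact rather than something substantive in the listed properties.
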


\medskip

\begin{remark}\label{remark:chop-dyadic}
We note that from $(d)$ in the previous result, Harnack's inequality, and \eqref{deltaQ} one can easily see that 
\begin{equation}\label{chop-dyadic:densities}
\frac{d\omega_L^{X_{Q'}}}{d\omega_L^{X_{Q''}}}(y)
\approx 
\frac1{\omega_L^{X_{Q''}}(Q')},
\qquad
\mbox{ for $\omega_L^{X_{Q''}}$-a.e. }y\in Q',
\mbox{whenever }Q'\subset Q''\in\dd.
\end{equation}		
Observe that since $\omega_L^{X_{Q''}}\ll \omega_L^{X_{Q'}}$  an analogous inequality for the reciprocal of the Radon-Nikodym derivative follows immediately. 
\end{remark}

\begin{remark}\label{remark:diam-radius}
Given $\Omega$, a 1-sided NTA domain satisfying the CDC, we claim that if $\Delta=\Delta(x,r)$ with $x\in\pom$ and $0<r<\diam(\pom)$ then $\diam(\Delta)\approx r$. To see this we first observe that $\diam(\Delta)\le 2r$. If $\diam(\Delta)\ge c_0 r/4$ ---$c_0$ is the Corkscrew constant--- then clearly $\diam(\Delta)\approx r$. Hence, we may assume that $\diam(\Delta)<c_0 r/4$. Let $s=2\diam(\Delta)$ so that $\diam(\Delta)<s<r$ and note that one can easily see that $\Delta=\Delta':=\Delta(x,s)$. Associated with $\Delta$ and $\Delta'$ we can consider $X_{\Delta}$ and $X_{\Delta'}$ the corresponding Corkscrew points. These are different, despite the fact that 
$\Delta=\Delta(x,r)$. Indeed,
\[
c_0r
\le 
\delta(X_{\Delta})\le |X_\Delta-X_{\Delta'}|+|X_{\Delta'}-x|
\le
|X_\Delta-X_{\Delta'}|+s
<
|X_\Delta-X_{\Delta'}|+
\frac{c_0}2 r
\]
which yields that $ |X_\Delta-X_{\Delta'}|\ge \frac{c_0}2 r$. Note that $X_\Delta\notin 2B':=B(x,2s)$ since otherwise we would get a contradiction:
$c_0r\le \delta(X_\Delta)\le |X_\Delta-x|<2s<c_0 r$. Hence we can invoke Lemma \ref{lemma:proppde} parts $(a)$ and $(b)$ and \eqref{sizestimate} to see that
\[
1\approx \omega_L^{X_\Delta}(\Delta)
=
\omega_L^{X_\Delta}(\Delta')
\approx
s^{n-1} G_L(X_{\Delta},X_{\Delta'})
\lesssim
s^{n-1} |X_{\Delta}-X_{\Delta'}|^{1-n}
\lesssim
(s/r)^{n-1}.
\]
This and the fact that $n\ge 2$ easily yields that $r\lesssim s$ as desired. 
\end{remark}	

We close this section by establishing an estimate for the non-tangential maximal function for elliptic-measure solutions.

\begin{proposition}\label{prop:NT-S-max} 	Let $\Omega\subset\mathbb{R}^{n+1}$ be a 1-sided NTA domain satisfying the CDC.
	Given $Q_0\in\dd$ and $f\in C(\pom)$ with $\supp f\subset 2\widetilde{\Delta}_{Q_0}$
	let
	\[
	u(X)=\int_{\pom} f(y)\,d\omega_{L}^X(y),\qquad X\in\pom.
	\]
	Then for every $x\in Q_0$,	
	\begin{equation}\label{eqn:NT-M}
		\mathcal{N}_{Q_0}u(x) 
		\lesssim
		\sup_{\substack{\Delta\ni x \\ 0<r_\Delta< 4\Xi  r_{Q_0}}} \aver{\Delta} |f(y)|\,d\omega_{L}^{X_{Q_0}}(y),
	\end{equation}
	and, as a consequence, for every $1<q\le\infty$
	\begin{equation}\label{NT-data}
		\|\mathcal{N}_{Q_0}u\|_{L^q(Q_0,\omega_L^{X_{Q_0}})}
		\lesssim
		\|f\|_{L^q(2\widetilde{\Delta}_{Q_0},\omega_L^{X_{Q_0}})}.
	\end{equation}
	Moreover, the implicit constants depend just on dimension $n$, the 1-sided NTA constants, the CDC constant, and the ellipticity constant of $L$ and on $q$ in  \eqref{NT-data}.
\end{proposition}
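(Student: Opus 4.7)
My plan is, for fixed $x\in Q_0$ and an arbitrary $Y\in \Gamma^*_{Q_0}(x)$, to establish \eqref{eqn:NT-M} as a pointwise upper bound on $|u(Y)|$. First I would pick $Q'\in \dd_{Q_0}$ with $x\in Q'$ and $I\in \W_{Q'}^*$ so that $Y\in I^{**}$; the properties recalled in Section \ref{subsection:sawtooth} give $|Y-x|\approx \delta(Y)\approx \ell(I)\approx \ell(Q')$. Writing $f=f^+-f^-$, the function $v(Z):=u^+(Z)+u^-(Z)=\int_{\pom}|f|\,d\omega_L^Z$ is a nonnegative weak solution of $Lv=0$ in $\Omega$ with boundary datum $|f|$. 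The relation $X(I)\to_{U_{Q'}} X_{Q'}$ in \eqref{kstar_K0} yields a Harnack chain in $\Omega$ of bounded length connecting $Y$ to $X_{Q'}$, so that Harnack's inequality gives
\[
|u(Y)|\le v(Y)\lesssim v(X_{Q'})=\int_{\pom}|f(y)|\,d\omega_L^{X_{Q'}}(y),
\]
reducing matters to estimating the right-hand side.

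Next I would carry out a dyadic shell decomposition centered at $x$. Set $r_0:=2\Xi r_{Q'}$, so that $X_{Q'}\in B_0:=B(x,r_0)\cap \Omega$ with $\delta(X_{Q'})\approx r_0$, and define $\Delta^k:=\Delta(x,2^k r_0)$. Let $k_0\ge 1$ be minimal with $\Delta^{k_0}\supset 2\widetilde{\Delta}_{Q_0}\supset \supp f$; an elementary computation shows $r_{\Delta^{k_0}}\lesssim r_{Q_0}$, and with a harmless adjustment of constants one may arrange that $r_{\Delta^k}<4\Xi r_{Q_0}$ for all $1\le k\le k_0$ (using $2\widetilde{\Delta}_{Q_0}$ in place of $\Delta^{k_0}$ at the top scale if need be). Split
\[
\int_{\pom}|f|\,d\omega_L^{X_{Q'}}=\int_{\Delta^1}|f|\,d\omega_L^{X_{Q'}}+\sum_{k=2}^{k_0}\int_{\Delta^k\setminus\Delta^{k-1}}|f|\,d\omega_L^{X_{Q'}}.
\]
For the first piece a short Harnack chain connects $X_{Q'}$ and $X_{\Delta^1}$, both at distance $\approx r_0$ from $\pom$, and together with Lemma \ref{lemma:proppde}(a) this yields $\int_{\Delta^1}|f|\,d\omega_L^{X_{Q'}}\lesssim \aver{\Delta^1}|f|\,d\omega_L^{X_{\Delta^1}}$. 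For each $k\ge 2$, Lemma \ref{lemma:proppde}(e) applied with $\Delta_0=\Delta^0$, $X=X_{Q'}$, and $j=k-1$ gives
\[
\frac{d\omega_L^{X_{Q'}}}{d\omega_L^{X_{\Delta^{k-1}}}}(y)\lesssim \Big(\frac{\delta(X_{Q'})}{2^{k-1}r_0}\Big)^{\!\rho}\lesssim 2^{-k\rho},\qquad y\in \pom\setminus \Delta^{k-1};
\]
swapping $X_{\Delta^{k-1}}$ for $X_{\Delta^k}$ by another bounded-length Harnack chain and using $\omega_L^{X_{\Delta^k}}(\Delta^k)\approx 1$ then produces
\[
\int_{\Delta^k\setminus\Delta^{k-1}}|f|\,d\omega_L^{X_{Q'}}\lesssim 2^{-k\rho}\aver{\Delta^k}|f|\,d\omega_L^{X_{\Delta^k}}.
\]

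The main obstacle will be converting the Corkscrew pole $X_{\Delta^k}$ in each surface-ball average to the pole $X_{Q_0}$, since Lemma \ref{lemma:proppde}(d) is only applicable when $X_{Q_0}\in \Omega\setminus 2\kappa_0 B_{\Delta^k}$. In that regime (d) directly gives $\aver{\Delta^k}|f|\,d\omega_L^{X_{\Delta^k}}\approx \aver{\Delta^k}|f|\,d\omega_L^{X_{Q_0}}$. In the complementary regime $2^k r_0\approx r_{Q_0}$, whence $X_{\Delta^k}$ and $X_{Q_0}$ are both interior points with $\delta\approx r_{Q_0}$ and $|X_{\Delta^k}-X_{Q_0}|\lesssim r_{Q_0}$; a Harnack chain of bounded length then yields the same comparison. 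Summing the geometric series $\sum_{k\ge 1}2^{-k\rho}<\infty$ and bounding each resulting average by the supremum on the right-hand side of \eqref{eqn:NT-M} proves that estimate. Finally, \eqref{NT-data} follows from \eqref{eqn:NT-M} by the $L^q$-boundedness, $1<q\le\infty$, of the surface-ball Hardy--Littlewood maximal operator with respect to $\omega_L^{X_{Q_0}}$; the doubling of $\omega_L^{X_{Q_0}}$ at all scales inside $2\widetilde{\Delta}_{Q_0}$ that is needed for this follows from Lemma \ref{lemma:proppde}(c) at small scales combined with the very same change-of-pole/Harnack argument at large scales.
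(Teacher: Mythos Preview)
Your proposal is correct and follows essentially the same route as the paper's proof: Harnack to move the pole to $X_{Q'}$, a dyadic annular decomposition of $\supp f$, the decay from Lemma~\ref{lemma:proppde}(e) on each annulus, change of pole to $X_{Q_0}$ via Lemma~\ref{lemma:proppde}(d) (or Harnack at the top scales), summation of the geometric series, and the maximal function bound for \eqref{NT-data}. The only cosmetic difference is that you center the annuli at $x$ while the paper centers them at $x_Q$ (using the balls $2^j\widetilde{\Delta}_Q$), which is immaterial since $x\in Q\subset\widetilde{\Delta}_Q$.
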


\begin{proof}
By decomposing $f$ into its positive and negative parts we may assume that $f$ is non-negative with $\supp f\subset 2\widetilde{\Delta}_{Q_0}$ and construct the associated $u$ as in the statement which is non-negative. Fix $x\in Q_0$ and let $X\in\Gamma_{Q_0}^*(x)$. Then, by definition there are $Q\in\dd_{Q_0}$ and $I\in \W_Q^*$ such that $x\in Q$  and $X\in I^{**}$. Hence using Harnack's inequality and the notation introduced in \eqref{deltaQ}--\eqref{deltaQ3}
\begin{multline*}
	u(X)
	=
	\int_{\pom} f(y)\,d\omega_L^X(y)
	\approx
	\int_{\pom} f(y)\,d\omega_L^{X_Q}(y)
	\\
	\le
	\int_{4\,\widetilde{\Delta}_Q} f(y)\,d\omega_L^{X_Q}(y)
	+
	\sum_{j=3}^\infty \int_{2^j\,\widetilde{\Delta}_Q\setminus 2^{j-1}\,\widetilde{\Delta}_Q} f(y)\,d\omega_L^{X_Q}(y)
	=:\sum_{j=2}^\infty \mathcal{I}_j.
\end{multline*}
Let $k_0\ge 0$ be such that $\ell(Q)=2^{-k_0}\ell (Q_0)$. Observe that for every $j\ge k_0+3$ one has that $2\widetilde{\Delta}_{Q_0}\setminus 2^{j-1}\widetilde{\Delta}_{Q}=\emptyset$. Otherwise there is $z\in 2\widetilde{\Delta}_{Q_0}\setminus 2^{j-1}\widetilde{\Delta}_{Q}$ and hence we get a contradiction:
\[
4\,\Xi\,r_{Q_0} 
\le 
2^{j-1-k_0} \,\Xi\,r_{Q_0} 
=
2^{j-1} \,\Xi\,r_{Q} 
\le
|z-x_Q|
\le
|z-x_{Q_0}|+ |x_Q-x_{Q_0}|
\le
3\,\Xi\,r_{Q_0}. 
\]
With this in hand, and since $\supp f\subset 2\widetilde{\Delta}_{Q_0}$, we clearly see that $\mathcal{I}_j=0$ for $j\ge k_0+3$. 

In order to estimate the $\mathcal{I}_j$'s we need some preparatives. 
Note that for every $2\le j\le k_0+2$ one has $2^{j}\widetilde{B}_Q\subset 5\widetilde{B}_{Q_0}$.  We claim that  
\begin{equation}\label{aux-chop}
	\frac{d\omega_L^{X_{2^{j}\widetilde{\Delta}_Q}}}{d\omega_L^{X_{Q_0}}}(y)\lesssim  \frac1{\omega_{L}^{X_{Q_0}}(2^{j}\widetilde{\Delta}_Q)},
	\qquad\mbox{for $\omega_L^{X_{Q_0}}$-a.e. $y\in2^j\,\widetilde{\Delta}_Q$, $2\le j\le k_0+2$.}
\end{equation}
Indeed, this estimate follows from Harnack's inequality and Lemma \ref{lemma:proppde} part $(a)$ when $j\approx k_0$ since $2^{j}\,\ell(Q)\approx \ell(Q_0)$,  and from Lemma \ref{lemma:proppde} part $(d)$ whenever $j\ll k_0$. We also observe that Lemma \ref{lemma:proppde} part $(a)$ and Harnack's inequality readily give that 
\begin{equation}\label{aux-Bourgain}
	\omega_{L}^{X_{2^{j}\widetilde{\Delta}_Q}}(2^{j}\widetilde{\Delta}_Q)\approx 1, \qquad \text{for every $2\le j\le k_0+2$.}
\end{equation}
Finally, by Lemma \ref{lemma:proppde} part $(e)$ and Harnack's inequality it follows that 
\begin{equation}\label{aux-decay-kernelfn}
	\frac{d\omega_L^{X_Q}}{d\omega_L^{X_{2^{j-1}\widetilde{\Delta}_Q}}}(y)\lesssim  2^{-j\,\rho},
	\qquad\mbox{for $\omega_L^{X_Q}$-a.e. $y\in\pom\setminus 2^{j-1}\,\widetilde{\Delta}_Q$},\ j\ge 3.
\end{equation}

Let us start estimating $\mathcal{I}_2$. Use Harnack's inequality and \eqref{aux-Bourgain}, \eqref{aux-chop} with $j=2$, to conclude that 
\[
\mathcal{I}_2
\approx
\aver{4\,\widetilde{\Delta}_Q} f(y)\,d\omega_L^{X_{\widetilde{\Delta}_Q}}(y)
\approx
\aver{4\,\widetilde{\Delta}_Q} f(y)\,d\omega_L^{X_{Q_0}}(y).
\]
On the other hand, for $3\le j\le k_0+2$, we employ \eqref{aux-decay-kernelfn}, Harnack's inequality, \eqref{aux-Bourgain}, and \eqref{aux-chop}
\begin{multline*}
	\mathcal{I}_j
	\lesssim
	2^{-j\,\rho} \int_{2^j\,\widetilde{\Delta}_Q\setminus 2^{j-1}\,\widetilde{\Delta}_Q} f(y)\,d\omega_L^{X_{2^{j-1}\,\widetilde{\Delta}_Q}}(y)
	\lesssim
	2^{-j\,\rho} \aver{2^j\,\widetilde{\Delta}_Q} f(y)\,d\omega_L^{X_{2^{j}\,\widetilde{\Delta}_Q}}(y)
	\\
	\approx
	2^{-j\,\rho} \aver{2^j\,\widetilde{\Delta}_Q} f(y)\,d\omega_L^{X_{Q_0}}(y).
\end{multline*}
If we now collect all the obtained estimates we conclude as desired \eqref{eqn:NT-M}:
\begin{multline*}
	u(X)
	\lesssim
	\sum_{j=2}^{k_0+2} \mathcal{I}_j
	\lesssim
	\sum_{j=2}^{k_0+2} 2^{-j\,\rho} \aver{2^j\,\widetilde{\Delta}_Q} f(y)\,d\omega_L^{X_{Q_0}}(y)
	\\
	\le
	\sup_{\substack{\Delta\ni x \\ 0<r_\Delta< 8\Xi  r_{Q_0}}} \aver{\Delta} |f(y)|\,d\omega_{L}^{X_{Q_0}}(y)
	\,
	\sum_{j=2}^{\infty} 2^{-j\,\rho}
	\lesssim
	\sup_{\substack{\Delta\ni x \\ 0<r_\Delta< 4\Xi  r_{Q_0}}} \aver{\Delta} |f(y)|\,d\omega_{L}^{X_{Q_0}}(y).
\end{multline*}

To complete the proof we just need to obtain \eqref{NT-data} but this follows at once upon using \eqref{eqn:NT-M} and observing that the local Hardy-Littlewood maximal function on its right hand side is bounded on ${L^q(20\,\widetilde{\Delta}_{Q_0},\omega_L^{X_{Q_0}})}$ since $\omega_L^{X_{Q_0}}$ is a doubling measure in $20\,\widetilde{\Delta}_{Q_0}$ by Lemma \ref{lemma:proppde} parts $(a)$ and $(c)$.
\end{proof}

\section{Dyadic sawtooth lemma for projections}\label{section:DJK-proj}

In this section, we shall prove two dyadic versions of the main lemma in \cite{DJK}. To set the stage we sate a result which is partially proved in \cite[Proposition 6.7]{HM1} under the further assumption that $\partial\Omega$ is Ahlfors regular

\begin{proposition}\label{prop:Pi-proj}
	Let $\Omega\subset\ree$, $n\ge 2$, be a 1-sided NTA domain satisfying the CDC. Fix $Q_0\in \dd $ and let $\mathcal{F}=\{Q_k\}_k \subset \mathbb{D}_{Q_0}$ be a family of pairwise disjoint dyadic cubes. There exists $Y_{Q_0}\in \Omega\cap \Omega_{\F,Q_0}\cap \Omega_{\F,Q_0}^*$ so that
	\begin{equation}\label{eq:common-cks}
		\dist(Y_{Q_0},\pom)\approx \dist(Y_{Q_0},\pom_{\F,Q_0})\approx \dist(Y_{Q_0},\pom_{\F,Q_0}^*)\approx \ell(Q_0),
	\end{equation}
	where the implicit constants depend only on dimension, the 1-sided NTA constants, the CDC constant, and is  independent of $Q_0$ and $\F$. 
	Additionally, for each $Q_j\in\mathcal{F}$, there is an $n$-dimensional cube $P_j\subset\partial\Omega_{\mathcal{F},Q_0}$, which is contained in a face of $I^*$ for some $I\in\mathcal{W}$, and  which satisfies
	\begin{equation}\label{props:Pj}
		\ell(P_j)\approx \dist(P_j,Q_j)\approx \dist(P_j,\partial\Omega)\approx \ell(I)\approx \ell(Q_j),
	\end{equation}
	and
	\begin{equation}\label{eqn:overlap-Pj}
		\sum_{j} 1_{P_j} \lesssim 1,
	\end{equation}
	where the implicit constants depend on allowable parameters.  
\end{proposition}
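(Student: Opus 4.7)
The plan is to argue essentially as in \cite[Proposition 6.7]{HM1}, noting that the Ahlfors regularity assumption used there is not actually necessary for the statement at hand: only the geometric doubling of $\pom$ (automatic as $\pom\subset\re^{n+1}$) and the sawtooth/Whitney infrastructure recalled in Section~\ref{subsection:sawtooth} are needed. We may assume $Q_0\notin \F$, since otherwise $\dd_{\F,Q_0}=\emptyset$ and $\Omega_{\F,Q_0}=\emptyset$, making the statement vacuous.

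\textbf{Common corkscrew point.} I will simply set $Y_{Q_0}:=X_{Q_0}$. Since $Q_0\in\dd_{\F,Q_0}$, the entire Whitney region $U_{Q_0}$ lies in $\Omega_{\F,Q_0}\cap\Omega_{\F,Q_0}^{*}$, and $X_{Q_0}\in U_{Q_0}$ by construction. Because $X_{Q_0}$ is the center of (or close to the center of) some $I\in\W_{Q_0}^{*}$ with $\ell(I)\approx \ell(Q_0)$ and $\dist(I,\pom)\approx \ell(I)$, one immediately has $\dist(Y_{Q_0},\pom)\approx \ell(Q_0)$ and, since $I^{**}\subset \Omega_{\F,Q_0}\cap \Omega_{\F,Q_0}^{*}$, also the lower bounds $\dist(Y_{Q_0},\pom_{\F,Q_0}),\,\dist(Y_{Q_0},\pom_{\F,Q_0}^{*})\gtrsim \ell(Q_0)$. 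The matching upper bound follows from $Q_0\subset \pom\cap\overline{\Omega_{\F,Q_0}}\cap \overline{\Omega_{\F,Q_0}^{*}}$ together with $\dist(X_{Q_0},Q_0)\approx \ell(Q_0)$.

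\textbf{Construction of the $P_j$.} Fix $Q_j\in\F$ (necessarily $Q_j\subsetneq Q_0$) and let $\widetilde{Q}_j$ be its dyadic parent; since $\F$ consists of pairwise disjoint cubes contained in $Q_0$, we have $\widetilde{Q}_j\in\dd_{\F,Q_0}$, hence $\W_{\widetilde{Q}_j}^{*}\subset \W_{\F,Q_0}$, while $\W_{Q_j}^{*}\cap \W_{\F,Q_0}=\emptyset$ since $Q_j\in\F$. Using the properties in \eqref{kstar_K0}, both $\W_{\widetilde{Q}_j}^{*}$ and $\W_{Q_j}^{*}$ consist of Whitney cubes of sidelength comparable to $\ell(Q_j)$ and within distance $\lesssim \ell(Q_j)$ of $Q_j$. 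My plan is to select a pair $I\in\W_{\widetilde{Q}_j}^{*}$ and $J\in\W_{Q_j}^{*}$ whose boundaries meet; such a pair exists because a Harnack chain from $X_{Q_j}$ (lying in some $J_0\in\W_{Q_j}^{*}$) to $X_{\widetilde{Q}_j}$ (lying in some $I_0\in\W_{\widetilde{Q}_j}^{*}$) must cross from $\W\setminus\W_{\F,Q_0}$ into $\W_{\F,Q_0}$ via a chain of adjacent Whitney cubes. Fix such $I,J$ and consider the common face $F=\partial I\cap \partial J$, a closed $n$-cube with $\ell(F)=\min\{\ell(I),\ell(J)\}\approx \ell(Q_j)$. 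The choice of the fattening parameter $\lambda<\lambda_0$ (so that $\tau J\cap I^{*}=\emptyset$ for some $\tau\in(1/2,1)$ and so that $I^{*}$ meets $K^{*}$ only when $\partial I$ meets $\partial K$) ensures that a central concentric sub-face $P_j\subset F$ of sidelength $\approx \ell(F)$ satisfies $P_j\subset \partial I^{*}\cap \partial \Omega_{\F,Q_0}$: indeed $I^{*}\subset \overline{\Omega_{\F,Q_0}}$, and $P_j$ lies at positive distance from every other Whitney cube $K$ (other than $J$) in $\W\setminus \W_{\F,Q_0}$, so a neighborhood of $P_j$ on the $J$-side is disjoint from every fattened cube in $\W_{\F,Q_0}$. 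The comparabilities in \eqref{props:Pj} are then immediate from $\ell(I)\approx \ell(Q_j)$, $\dist(I,\widetilde{Q}_j)\lesssim \ell(Q_j)$, and \eqref{constwhitney}.

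\textbf{Bounded overlap.} For this I will argue that whenever $P_j\cap P_k\neq\emptyset$ one has $\ell(Q_j)\approx \ell(Q_k)$ and $\dist(Q_j,Q_k)\lesssim \ell(Q_j)$. The first follows from $\ell(P_i)\approx \ell(Q_i)$ together with the fact that two $n$-cubes of very different sidelengths cannot intersect without the smaller being essentially buried in the larger, which in our setting is incompatible with $P_i$ being a full face of a Whitney cube of size $\approx \ell(Q_i)$. The second follows since any point common to $P_j$ and $P_k$ is at distance $\approx\ell(Q_j)\approx \ell(Q_k)$ from both $Q_j$ and $Q_k$. Combined with the pairwise disjointness of the $Q_i$'s and the geometric doubling of $\pom$ (which bounds the number of cubes of scale $\approx \ell(Q_j)$ that can cluster within distance $\lesssim \ell(Q_j)$ of $Q_j$), this yields \eqref{eqn:overlap-Pj}.

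\textbf{Main obstacle.} The only delicate point is the justification that the face $P_j$ genuinely lies in $\partial \Omega_{\F,Q_0}$ rather than being hidden inside the sawtooth by the overlap of fattened Whitney cubes. This is purely a matter of choosing the fattening parameter $\lambda$ small, exactly as was done in Section~\ref{subsection:sawtooth} to guarantee the separation property $\tau J\cap I^{*}=\emptyset$ for distinct $I,J\in\W$; once this is in place, the remaining verifications are straightforward.
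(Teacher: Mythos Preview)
Your approach is essentially the same as the paper's: both defer to \cite[Proposition~6.7]{HM1} for the construction of the $P_j$ and then argue bounded overlap via geometric doubling. Two points deserve comment.

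\textbf{On $Y_{Q_0}$.} Your choice $Y_{Q_0}=X_{Q_0}$ is morally right but the justification is imprecise: $X_{Q_0}$ is not defined as the center of any Whitney cube, and the fact that $X_{Q_0}\in U_{Q_0}$ only places it in some $I^{*}$ with $I\in\W_{Q_0}^{*}$, possibly near $\partial I^{*}$. The paper sidesteps this by first invoking the Corkscrew condition of $\Omega_{\F,Q_0}$ (available by Proposition~\ref{prop:CDC-inherit}) to find a point $Y_0$ deep in the sawtooth, then passing to the center $X(I)$ of the Whitney cube $I$ containing $Y_0$; this guarantees $\dist(Y_{Q_0},\partial I^{*})\gtrsim\lambda\ell(I)\approx\ell(Q_0)$ directly. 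Your argument can be repaired the same way: take the Whitney cube $I_0\ni X_{Q_0}$ (which has $\ell(I_0)\approx\delta(X_{Q_0})\approx\ell(Q_0)$), verify $I_0\in\W_{Q_0}^{*}$, and set $Y_{Q_0}=X(I_0)$.

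\textbf{On bounded overlap.} Your argument that $P_j\cap P_k\neq\emptyset$ forces $\ell(Q_j)\approx\ell(Q_k)$ is not valid as written. Two $n$-cubes in $\ree$ of very different scales can certainly intersect without either being ``buried'' in the other (they need only share a lower-dimensional edge), and in your own construction $P_j$ is a concentric \emph{sub}-face, not a full face, so the heuristic has no force. The paper's argument is the correct one: since $P_j\subset I_j^{*}$ and $P_k\subset I_k^{*}$ for some $I_j,I_k\in\W$, the intersection $P_j\cap P_k\neq\emptyset$ gives $I_j^{*}\cap I_k^{*}\neq\emptyset$, which by the choice of $\lambda$ forces $\partial I_j\cap\partial I_k\neq\emptyset$, and then the Whitney property yields $\ell(I_j)\approx\ell(I_k)$, hence $\ell(Q_j)\approx\ell(Q_k)$ by \eqref{props:Pj}. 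From there your distance estimate and the geometric doubling conclusion go through exactly as in the paper.
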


\begin{proof}
Note first that $\Omega_{\F, Q_0}$ is a 1-sided NTA domain satisfying the CDC (see Proposition \ref{prop:CDC-inherit}). Pick an arbitrary $x_0\in \pom_{\F, Q_0}$ and let $Y_0$ be a Corkscrew point relative to the surface ball  $B(x_0,\diam(\pom_{\F, Q_0})/2)\cap\pom_{\F, Q_0}$ for the bounded domain $\Omega_{\F, Q_0}$ (recall that one has $\diam(\pom_{\F, Q_0})\approx \ell(Q_0)<\infty$ by \eqref{definicionkappa12}). Note that $Y_0\in\Omega_{\F, Q_0}\subset\Omega$, which is comprised of fattened Whitney boxes,  then $Y_0\in I^{**}$ for some $I\in\W$, with $\interior(I^{**})\subset \Omega_{\F, Q_0}$. Let $Y_{Q_0}=X(I)$ be the center of $I$ so that $\delta(Y_0)\approx \ell(I)\approx \delta(Y_{Q_0})$. Then,  
\begin{multline*}
	\ell(Q_0)\approx \diam(\pom_{\F, Q_0}) \approx \dist(Y_0,\pom_{\F, Q_0})\le \dist(Y_0,\pom_{\F, Q_0^*})\le\delta(Y_0)
	\\
	\approx \delta(Y_{Q_0})\approx \ell(I)\le \diam(\Omega_{\F, Q_0})=\diam(\pom_{\F, Q_0})\approx \ell(Q_0).
\end{multline*}

To continue we note that the existence of the family $\{P_j\}_j$ so that \eqref{props:Pj} holds has been proved in \cite[Proposition 6.7]{HM1} under the further assumption that $\partial\Omega$ is Ahlfors regular. However, a careful examination of the proof shows that the same argument applies in our scenario. We are left with showing \eqref{eqn:overlap-Pj}. To see this, observe that as in \cite[Remark 6.9]{HM1} if $P_j\cap P_k\neq\emptyset$ then $\ell(Q_j)\approx \ell(Q_k)$. Indeed from the previous result $P_j\subset I_j^*$ and $P_k\subset I_k^*$ for some $I_j,I_k\in\W$. Thus $I_j^*$ meets $I_k^*$ and by construction $I_j$ and $I_k$ meet.
Using \eqref{props:Pj} and the nature of the Whitney cubes we see that $\ell(Q_j)\approx\ell(I_j)\approx \ell(I_k)\approx \ell(Q_k)$. Using this and \eqref{props:Pj} one can also see that $\dist(Q_j,Q_k)\lesssim \ell(Q_j)\approx \ell(Q_k)$. Hence, fixing $P_{j_0}$ and $x\in P_{j_0}$ we have some constant $k_0\ge 1$ (depending on the allowable parameters) such that 
\begin{multline*}
\sum_{j} 1_{P_j}(x)
\le
\#\{P_k:\ P_k\cap P_{j_0}\neq \emptyset\}
\\
\leq 
\#\big\{Q_k:\ 2^{-k_0}\le \tfrac{\ell(Q_k)}{\ell(Q_{j_0})}\le 2^{k_0},\   \dist(Q_k,Q_{j_0})\le 2^{k_0}\ell(Q_{j_0})\big\}
\\
=
\sum_{k=-k_0}^{k_0} \#\big\{Q_k:\ \ell(Q_k)=2^{k}\ell(Q_{j_0}),\   \dist(Q_k,Q_{j_0})\le 2^{k_0}\ell(Q_{j_0})\big\}
=:\sum_{k=-k_0}^{k_0} N_k.
\end{multline*}
To estimate each of the terms in the last sum fix $k$ and note that since the cubes belong to the same generation then $Q_k$'s involved are disjoint and hence so they are the corresponding $\Delta_{Q_k}$'s which all have radius $(2C)^{-1}2^{k}\ell(Q_{j_0})$. In particular, $|x_{Q_k}-x_{Q_k'}|\gtrsim 2^{k}\ell(Q_{j_0})\ge 2^{-k_0}\ell(Q_{j_0})$ for any such cubes $Q_k$ and $Q_{k'}$. Moreover, 
\[
|x_{Q_k}-x_{Q_{j_0}}|\le
\diam(Q_k)+\dist(Q_k,Q_{j_0})+\diam(Q_{j_0})
\lesssim
2^{k_0}\ell(Q_{j_0}).
\]
Thus it is easy to see (since $\ree$ is geometric doubling) that $N_k\lesssim 2^{2k_0(n+1)}$. All these together gives us desired \eqref{eqn:overlap-Pj} ---we note in passing that the argument in \cite[Remark 6.9]{HM1} used the fact there $\pom$ is AR to estimate each $N_k$, while here we are invoking the geometric doubling property of the ambient space $\ree$. 
\end{proof}

We are now ready to state the first main result of this chapter which is a version of \cite[Lemma 6.15]{HM1} (see also \cite{DJK}) valid in our setting:


\begin{lemma}[Discrete sawtooth lemma for projections]\label{lemma:DJK-sawtooth}
	Suppose that $\Omega\subset\re^{n+1}$, $n\ge 2$, is a \textbf{bounded} 1-sided NTA domain satisfying the CDC. Let $Q_0\in\mathbb{D}$, let $\mathcal{F}=\{Q_i\}\subset\mathbb{D}_{Q_0}$ be a family of pairwise disjoint dyadic cubes, and let $\mu$ be a dyadically doubling measure in $Q_0$. Given two real (non-necessarily symmetric) elliptic $L_0$, $L$, we write  $\omega_0^{Y_{Q_0}}=\omega_{L_0,\Omega}^{Y_{Q_0}}$,  $\omega_L^{Y_{Q_0}}=\omega_{L,\Omega}^{Y_{Q_0}}$ for the elliptic measures associated with $L_0$ and $L$ for the domain $\Omega$  with fixed pole at $Y_{Q_0}\in\Omega_{\mathcal{F},Q_0}\cap\Omega$ (cf.~Lemma \ref{prop:Pi-proj}). Let $\omega_{L,*}^{Y_{Q_0}}=\omega_{L,\Omega_{\mathcal{F},Q_0}}^{Y_{Q_0}}$ be the elliptic measure associated with $L$ for the domain $\Omega_{\mathcal{F},Q_0}$ with fixed pole at  $Y_{Q_0}\in\Omega_{\mathcal{F},Q_0}\cap\Omega$. Consider $\nu_L^{Y_{Q_0}}$ the measure defined by
	\begin{equation}\label{eq:def-nu}
	\nu_L^{Y_{Q_0}}(F)=\omega_{L,*}^{Y_{Q_0}}\Big(F\setminus\bigcup_{Q_i\in\mathcal{F}}Q_i\Big)+\sum_{Q_i\in\mathcal{F}}\frac{\omega_L^{Y_{Q_0}}(F\cap Q_i)}{\omega_L^{Y_{Q_0}}(Q_i)}\omega_{L,*}^{Y_{Q_0}}(P_i),\qquad F\subset Q_0,
	\end{equation}
	where $P_i$ is the cube produced in Proposition \ref{prop:Pi-proj}. Then $\mathcal{P}_{\mathcal{F}}^{\mu}\nu_L^{Y_{Q_0}}$ (see \eqref{defprojection}) depends only on $\omega_0^{Y_{Q_0}}$ and $\omega_{L,*}^{Y_{Q_0}}$, but not on $\omega_L^{Y_{Q_0}}$. More precisely,
	\begin{equation}\label{eq:def-nu:P}
	\mathcal{P}_{\mathcal{F}}^{\mu}\nu_L^{Y_{Q_0}}(F)
	=
	\omega_{L,*}^{Y_{Q_0}}\Big(F\setminus\bigcup_{Q_i\in\mathcal{F}}Q_i\Big)+\sum_{Q_i\in\mathcal{F}}\frac{\mu(F\cap Q_i)}{\mu(Q_i)}\omega_{L,*}^{Y_{Q_0}}(P_i),\qquad F\subset Q_0.
	\end{equation}
	Moreover, there exists $\theta>0$ such that for all $Q\in\mathbb{D}_{Q_0}$ and all $F\subset Q$, we have
	\begin{equation}\label{ainfsawtooth}
	\bigg(\frac{\mathcal{P}_{\mathcal{F}}^{\mu}\omega_L^{Y_{Q_0}}(F)}{\mathcal{P}_{\mathcal{F}}^{\mu}\omega_L^{Y_{Q_0}}(Q)}\bigg)^\theta
	\lesssim
	\frac{\mathcal{P}_{\mathcal{F}}^{\mu}\nu_L^{Y_{Q_0}}(F)}{\mathcal{P}_{\mathcal{F}}^{\mu}\nu_L^{Y_{Q_0}}(Q)}
	\lesssim
	\frac{\mathcal{P}_{\mathcal{F}}^{\mu}\omega_L^{Y_{Q_0}}(F)}{\mathcal{P}_{\mathcal{F}}^{\mu}\omega_L^{Y_{Q_0}}(Q)}.
	\end{equation}
\end{lemma}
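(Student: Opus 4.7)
The plan is to follow the blueprint of the main lemma of \cite{DJK}, as reformulated dyadically in \cite[Lemma 6.15]{HM1}, carefully replacing any appeal to surface measure or Ahlfors regularity by the Green-function and elliptic-measure comparisons available in the CDC setting via Lemma \ref{lemma:proppde} and Proposition \ref{prop:CDC-inherit}. I would proceed in three steps. The first step is to verify \eqref{eq:def-nu:P}, which is a direct algebraic computation. Since the cubes in $\F$ are pairwise disjoint, for each $Q_j\in\F$ the first term in \eqref{eq:def-nu} with $F=Q_j$ vanishes and only the $i=j$ summand survives, so $\nu_L^{Y_{Q_0}}(Q_j)=\omega_{L,*}^{Y_{Q_0}}(P_j)$; likewise $\nu_L^{Y_{Q_0}}(F\setminus\bigcup_i Q_i)=\omega_{L,*}^{Y_{Q_0}}(F\setminus\bigcup_i Q_i)$. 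Plugging these into the definition of $\mathcal{P}_\F^\mu$ from \eqref{defprojection} yields \eqref{eq:def-nu:P}; in particular the right-hand side no longer involves $\omega_L$.

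The second and more substantial step, which I expect to be the main obstacle, is the DJK-type atom comparability
\[
\omega_L^{Y_{Q_0}}(Q_i)\;\approx\;\omega_{L,*}^{Y_{Q_0}}(P_i),\qquad Q_i\in\F,
\]
with constants independent of $i$. The strategy is to apply Lemma \ref{lemma:proppde} part $(b)$ in both $\Omega$ and in the sawtooth $\Omega_{\F,Q_0}$, which is again a 1-sided NTA domain satisfying the CDC by Proposition \ref{prop:CDC-inherit}. By \eqref{eq:common-cks} the pole $Y_{Q_0}$ sits at distance $\approx\ell(Q_0)$ from both $\pom$ and $\pom_{\F,Q_0}$, and by \eqref{props:Pj} one has $\ell(P_i)\approx\ell(Q_i)\lesssim\ell(Q_0)$, so each of $\omega_L^{Y_{Q_0}}(Q_i)$ and $\omega_{L,*}^{Y_{Q_0}}(P_i)$ is comparable to $\ell(Q_i)^{n-1}$ times the corresponding Green function evaluated at $Y_{Q_0}$ and at a Corkscrew point attached to $Q_i$ (resp.\ $P_i$). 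Proposition \ref{prop:Pi-proj} guarantees that those two Corkscrew points can be joined by a Harnack chain inside $\Omega_{\F,Q_0}^*\subset\Omega$; combined with the inequality $G_{L,\Omega_{\F,Q_0}}\le G_{L,\Omega}$ (from the maximum principle, since $\Omega_{\F,Q_0}\subset\Omega$) and the matching lower bound from Lemma \ref{lemma:proppde} part $(b)$ in both domains, one obtains a two-sided comparison of the two Green functions, and hence the claim. Executing this step cleanly without invoking Ahlfors regularity is the heart of the matter.

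With the atom-comparability in hand, both inequalities in \eqref{ainfsawtooth} follow. A computation parallel to the first step yields
\[
\mathcal{P}_\F^\mu\omega_L^{Y_{Q_0}}(F)=\omega_L^{Y_{Q_0}}\Bigl(F\setminus\bigcup_{Q_i\in\F} Q_i\Bigr)+\sum_{Q_i\in\F}\frac{\mu(F\cap Q_i)}{\mu(Q_i)}\,\omega_L^{Y_{Q_0}}(Q_i).
\]
On each atom $Q_i$, both projected measures are absolutely continuous with respect to $\mu$ with densities $\omega_L^{Y_{Q_0}}(Q_i)/\mu(Q_i)$ and $\omega_{L,*}^{Y_{Q_0}}(P_i)/\mu(Q_i)$ respectively, which are comparable by the second step. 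On the remaining piece $Q_0\setminus\bigcup_i Q_i$, a common boundary portion of the two 1-sided NTA CDC domains $\Omega$ and $\Omega_{\F,Q_0}$, the projections reduce to the restrictions of $\omega_L^{Y_{Q_0}}$ and $\omega_{L,*}^{Y_{Q_0}}$; a local CFMS-type comparison (using parts $(a)$, $(d)$, $(e)$ of Lemma \ref{lemma:proppde} applied in each domain, with poles joined by Harnack in $\Omega_{\F,Q_0}^*$) places these two measures into a dyadic $A_\infty$ relationship on the cubes $Q\in\dd_{\F,Q_0}$. Assembling the two regimes through the formulas for $\mathcal{P}_\F^\mu$ and using the dyadic doubling of $\mu$, one then extracts a dyadic reverse Hölder inequality of some exponent $\theta\in(0,1)$ between $\mathcal{P}_\F^\mu\omega_L^{Y_{Q_0}}$ and $\mathcal{P}_\F^\mu\nu_L^{Y_{Q_0}}$ on $Q_0$, which is equivalent to \eqref{ainfsawtooth}.
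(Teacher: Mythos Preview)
Your Step 2 contains a genuine gap. The two-sided atom comparability $\omega_L^{Y_{Q_0}}(Q_i)\approx\omega_{L,*}^{Y_{Q_0}}(P_i)$ cannot be obtained from the argument you sketch: Lemma \ref{lemma:proppde}$(b)$ only relates the Green function and the elliptic measure \emph{within a single domain}, while the maximum principle gives $G_{L,\Omega_{\F,Q_0}}\le G_{L,\Omega}$ and nothing in the other direction. There is no ``matching lower bound'' that lets you pass from the Green function of $\Omega$ to that of the sawtooth, and in fact such a uniform two-sided comparison is not true in general---if it were, \eqref{ainfsawtooth} would hold with $\theta=1$, which is precisely what the DJK mechanism is designed to circumvent.

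The paper proceeds differently. It proves only the \emph{one-sided} estimates $\omega_{L,*}^{Y_Q}(F\cap E_0)\lesssim\omega_L^{Y_Q}(F\cap E_0)$ and $\omega_{L,*}^{Y_Q}(P_i)\lesssim\omega_L^{Y_Q}(Q_i)$ (the first by the maximum principle since $\Omega_{\F,Q_0}\subset\Omega$ and $E_0\subset\partial\Omega\cap\partial\Omega_{\F,Q_0}$, the second by Bourgain plus the maximum principle). After a careful change of poles using Lemma \ref{lemma:proppde}$(d)$ in both domains and the geometric fact that $\mathcal{P}_\F^\mu\nu_L^{Y_{Q_0}}(Q)\approx\omega_{L,*}^{Y_{Q_0}}(\Delta_\star^Q)$ for $Q\in\dd_{\F,Q_0}$, these yield the \emph{second} inequality in \eqref{ainfsawtooth} directly. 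The \emph{first} inequality, with the exponent $\theta$, then follows from an abstract dyadic result (\cite[Lemma B.7]{HM1}), whose hypothesis is that $\mathcal{P}_\F^\mu\nu_L^{Y_{Q_0}}$ be dyadically doubling; this is established separately in Lemma \ref{lemmma:P-nu-doubling}. You should replace your Step 2 by the one-sided argument plus the doubling lemma, and drop the attempt at a direct two-sided Green function comparison.
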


\begin{proof}
	
Our argument follows the ideas from \cite[Lemma 6.15]{HM1} and we use several auxiliary technical results from \cite[Section 6]{HM1} which were proved under the additional assumption that $\pom$ is AR. However, as we will indicate along the proof, most of them can be adapted to our setting. Those arguments that require new ideas will be explained in detail.

We first observe that \eqref{eq:def-nu:P} readily follows from the definitions of $\mathcal{P}_{\mathcal{F}}^{\mu}$ and $\nu_L^{Y_{Q_0}}$. We first establish the second estimate in \eqref{ainfsawtooth}. With this goal in mind let us fix $Q\in\dd_{Q_0}$ and $F\subset Q_0$.

\noindent\textbf{Case 1:} There exists $Q_i\in\F$ such that $Q\subset Q_i$. By \eqref{eq:def-nu:P} we have
\[
\frac{\mathcal{P}_{\mathcal{F}}^{\mu}\nu_L^{Y_{Q_0}}(F)}{\mathcal{P}_{\mathcal{F}}^{\mu}\nu_L^{Y_{Q_0}}(Q)}
=
\frac{\frac{\mu(F\cap Q_i)}{\mu(Q_i)}\omega_{L,*}^{Y_{Q_0}}(P_i)}{\frac{\mu(Q\cap Q_i)}{\mu(Q_i)}\omega_{L,*}^{Y_{Q_0}}(P_i)}
=
\frac{\mu(F)}{\mu(Q)}
=
\frac{\frac{\mu(F\cap Q_i)}{\mu(Q_i)}\omega_L^{Y_{Q_0}}(Q_i)}{\frac{\mu(Q\cap Q_i)}{\mu(Q_i)}\omega_L^{Y_{Q_0}}(Q_i)}
=
\frac{\mathcal{P}_{\mathcal{F}}^{\mu}\omega_L^{Y_{Q_0}}(F)}{\mathcal{P}_{\mathcal{F}}^{\mu}\omega_L^{Y_{Q_0}}(Q)}.
\]

\medskip

\noindent\textbf{Case 2:} $Q\not\subset Q_i$ for any $Q_i\in\F$, that is, $Q\in\dd_{\F,Q_0}$. In particular if $Q\cap Q_i\neq\emptyset$ with $Q_i\in\F$ then necessarily $Q_i\subsetneq Q$. Let $x_i^\star$ denote the center of $P_i$ and pick $r_i\approx \ell(Q_i)\approx \ell(P_i)$ so that $P_i\subset \Delta_\star(x_i^\star,r_i):=B(x_i^\star,r_i)\cap\partial\Omega_{\F,Q_0}$. Note that by Proposition \ref{prop:CDC-inherit}, Harnack's inequality and Lemma \ref{lemma:proppde} parts $(a)$ and $(c)$ we have that  $\omega_{L,*}^{Y_{Q_0}}(P_i)\approx \omega_{L,*}^{Y_{Q_0}}(\Delta_\star(x_i^\star,r_i))$. On the other hand as in \cite[Proposition 6.12]{HM1} one can see that 
\begin{equation}\label{eqn:L6-12-HM1}
\Delta_\star^Q:=B(x_Q^\star,t_Q)\cap\partial\Omega_{\F,Q_0}
\subset 
\Big(Q\setminus\bigcup_{Q_i\in\F}Q_i\Big)\bigcup
\Big(\bigcup_{Q_i\in\F: Q_i\subsetneq Q} \Delta_\star(x_i^\star,r_i) \Big)
\end{equation}
with $t_Q\approx\ell(Q)$, $x_Q^\star\in \partial\Omega_{\F,Q_0}$ and $\dist(Q, \Delta_\star^Q)\lesssim \ell(Q)$ with implicit constants depending on the allowable parameters. We note that the last expression  is slightly different to that in \cite[Proposition 6.2]{HM1}, nonetheless the one stated here follows from the proof in account of \cite[(6.14) and Proposition 6.1]{HM1} as $\partial Q_i$ is contained in $\overline{T_{Q_i}}$. Besides, Proposition \ref{prop:Pi-proj} easily yields
\begin{equation}\label{eqn:L6-12-HM1:new}
\Big(Q\setminus\bigcup_{Q_i\in\F}Q_i\Big)\bigcup
\Big(\bigcup_{Q_i\in\F: Q_i\subsetneq Q} P_i \Big)
\subset 
\Big(Q\setminus\bigcup_{Q_i\in\F}Q_i\Big)\bigcup
\Big(\bigcup_{Q_i\in\F: Q_i\subsetneq Q} \Delta_\star(x_i^\star,r_i) \Big)
\subset 
C\Delta_\star^Q,
\end{equation}
hence 
\begin{equation}
\omega_{L,*}^{Y_{Q_0}}\Big(\Big(Q\setminus\bigcup_{Q_i\in\F}Q_i\Big)\bigcup
\Big(\bigcup_{Q_i\in\F: Q_i\subsetneq Q} \Delta_\star(x_i^\star,r_i) \Big)\Big)
\lesssim
\omega_{L,*}^{Y_{Q_0}}(\Delta_\star^Q).
\end{equation}

Writing $E_0=Q_0\setminus \bigcup_{Q_i\in\F} Q_i\subset \pom\cap\partial\Omega_{\F,Q}$ (see \cite[Proposition 6.1]{HM1}) we have 
\begin{multline}\label{proj-aux-top}
\omega_{L,*}^{Y_{Q_0}}(\Delta_\star^Q)
\le
\omega_{L,*}^{Y_{Q_0}}(Q\cap E_0)
+
\sum_{Q_i\in\F: Q_i\subsetneq Q} \omega_{L,*}^{Y_{Q_0}}(\Delta_\star(x_i^\star,r_i))
\\ 
\lesssim
\omega_{L,*}^{Y_{Q_0}}(Q\cap E_0)
+
\sum_{Q_i\in\F: Q_i\subsetneq Q} \omega_{L,*}^{Y_{Q_0}}(P_i)
=
\mathcal{P}_{\mathcal{F}}^{\mu}\nu_L^{Y_{Q_0}}(Q)
\end{multline}
and, by \eqref{eqn:overlap-Pj},
\begin{multline}\label{proj-aux-top:1}
\mathcal{P}_{\mathcal{F}}^{\mu}\nu_L^{Y_{Q_0}}(Q)
=
\omega_{L,*}^{Y_{Q_0}}(Q\cap E_0)
+
\sum_{Q_i\in\F: Q_i\subsetneq Q}  \frac{\mu(Q\cap Q_i)}{\mu(Q_i)}\omega_{L,*}^{Y_{Q_0}}(P_i)
\\=
\omega_{L,*}^{Y_{Q_0}}(Q\cap E_0)
+
\sum_{Q_i\in\F: Q_i\subsetneq Q}  \omega_{L,*}^{Y_{Q_0}}(P_i)
\\
\lesssim
\omega_{L,*}^{Y_{Q_0}}\Big((Q\cap E_0)\bigcup\Big(\bigcup_{Q_i\in\F: Q_i\subsetneq Q} P_i\Big)\Big)
\lesssim
\omega_{L,*}^{Y_{Q_0}}(\Delta_\star^Q).
\end{multline}

Since $Q\in\dd_{\F,Q_0}$ we can invoke \cite[Proposition 6.4]{HM1} (which also holds in the current setting) to find $Y_Q\in\Omega_{\F,Q_0}$ which serves as a Corkscrew point simultaneously for $\Omega_{\F,Q_0}$ with respect to the surface ball $\Delta_\star(y_Q,s_Q)$ for some $y_Q\in\Omega_{\F,Q}$ and some $s_Q\approx\ell(Q)$, and for $\Omega$ with respect to each surface ball $\Delta(x,s_Q)$, for every $x\in Q$. Applying \eqref{chop-dyadic:densities} and Harnack's inequality to join  $Y_Q$ with $X_Q$ and $Y_{Q_0}$ with $Y_Q$ we have
\begin{equation}\label{chop-Proj-Omega}
\frac{d\omega_L^{Y_{Q}}}{d\omega_L^{Y_{Q_0}}}
\approx
\frac1{\omega_L^{Y_{Q_0}}(Q)},
\qquad
\mbox{$\omega_L^{Y_{Q_0}}$-a.e. in } Q.
\end{equation}
On the other hand one can see that 
\begin{equation}\label{proj-aux-cont}
\widetilde{B}_Q\bigcup \Big(\bigcup_{Q_i\in\F: Q_i\subsetneq Q} B(x_i^\star,r_i) \Big)
\subset 
B(y_Q,\widehat{s}_Q),
\end{equation}
for some $\widehat{s}_Q\approx s_Q$. Invoking then Proposition \ref{prop:CDC-inherit}, and Lemma \ref{lemma:proppde} parts $(c)$ and $(d)$ in the domain $\Omega_{\F,Q_0}$ we can analogously see 
\begin{equation}\label{chop-Proj-Omega-star}
\frac{d\omega_{L,*}^{Y_{Q}}}{d\omega_{L,*}^{Y_{Q_0}}}
\approx
\frac1{\omega_{L,*}^{Y_{Q_0}}(\Delta(y_Q,\widehat{s}_Q))}
\approx
\frac1{\omega_{L,*}^{Y_{Q_0}}(\Delta^Q_\star)},
\qquad
\mbox{$\omega_{L,*}^{Y_{Q_0}}$-a.e. in }\Delta(y_Q,\widehat{s}_Q).
\end{equation}

Next we invoke \eqref{proj-aux-top}, \eqref{proj-aux-cont}, and \eqref{chop-Proj-Omega} to obtain
\begin{multline}\label{proj-aux-111}
\frac{\mathcal{P}_{\mathcal{F}}^{\mu}\nu_L^{Y_{Q_0}}(F)}{\mathcal{P}_{\mathcal{F}}^{\mu}\nu_L^{Y_{Q_0}}(Q)}
\approx
\frac{\omega_{L,*}^{Y_{Q_0}}(F\cap E_0)}{\omega_{L,*}^{Y_{Q_0}}(\Delta_\star^Q)}
+
\sum_{Q_i\in\F: Q_i\subsetneq Q} 
\frac{\mu(F\cap Q_i)}{\mu(Q_i)} \frac{\omega_{L,*}^{Y_{Q_0}}(P_i)}{\omega_{L,*}^{Y_{Q_0}}(\Delta_\star^Q)}
\\
\approx
\omega_{L,*}^{Y_{Q}}(F\cap E_0)
+
\sum_{Q_i\in\F: Q_i\subsetneq Q} 
\frac{\mu(F\cap Q_i)}{\mu(Q_i)} \omega_{L,*}^{Y_{Q}}(P_i).
\end{multline}
We claim the following estimates hold
\begin{equation}\label{main-claim:proj}
\omega_{L,*}^{Y_{Q}}(F\cap E_0)\lesssim \omega_{L}^{Y_{Q}}(F\cap E_0), \qquad
\omega_{L,*}^{Y_{Q}}(P_i)\lesssim \omega_{L}^{Y_{Q}}(Q_i).
\end{equation}
The first estimate follows easily from the maximum principle since $\Omega_{\F,Q_0}\subset \Omega$ and $F\cap E_0\subset \pom\cap\partial \Omega_{\F,Q_0}$. For the second one, by the maximum principle we just need to see that $\omega_{L}^{X}(Q_i)\gtrsim 1$ for $X\in P_i$, but this follows from Lemma \ref{lemma:proppde} part $(a)$, \eqref{deltaQ}, Harnack's inequality, and \eqref{props:Pj}.

With the previous estimates at our disposal we can the continue with our estimate \eqref{proj-aux-111}: 
\begin{multline*}
\frac{\mathcal{P}_{\mathcal{F}}^{\mu}\nu_L^{Y_{Q_0}}(F)}{\mathcal{P}_{\mathcal{F}}^{\mu}\nu_L^{Y_{Q_0}}(Q)}
\lesssim
\omega_{L}^{Y_{Q}}(F\cap E_0)
+
\sum_{Q_i\in\F: Q_i\subsetneq Q} 
\frac{\mu(F\cap Q_i)}{\mu(Q_i)} \omega_{L}^{Y_{Q}}(Q_i)
\\
\approx
\frac{\omega_{L}^{Y_{Q_0}}(F\cap E_0)}{\omega_{L}^{Y_{Q_0}}(Q)}
+
\sum_{Q_i\in\F: Q_i\subsetneq Q} 
\frac{\mu(F\cap Q_i)}{\mu(Q_i)} \frac{\omega_{L}^{Y_{Q_0}}(Q_i)}{\omega_{L}^{Y_{Q_0}}(Q)}
\\
=
\frac{\mathcal{P}_{\mathcal{F}}^{\mu}\omega_L^{Y_{Q_0}}(F)}{\omega_{L}^{Y_{Q_0}}(Q)}
=
\frac{\mathcal{P}_{\mathcal{F}}^{\mu}\omega_L^{Y_{Q_0}}(F)}{\mathcal{P}_{\mathcal{F}}^{\mu}\omega_L^{Y_{Q_0}}(Q)},
\end{multline*}
where we have used \eqref{proj-aux-cont} and that ${\mathcal{P}_{\mathcal{F}}^{\mu}\omega_L^{Y_{Q_0}}(Q)}=\omega_L^{Y_{Q_0}}(Q)$. This proves the second estimate in \eqref{ainfsawtooth} in the current case.

Once we have shown the second estimate in \eqref{ainfsawtooth} we can invoke \cite[Lemma B.7]{HM1} (which is a purely dyadic result and hence applies in our setting) along with Lemma \ref{lemmma:P-nu-doubling} below to eventually obtain the first estimate in \eqref{ainfsawtooth}. 
\end{proof}

As a consequence of the previous result we can easily obtain a dyadic analog of the main lemma in \cite{DJK}.

\begin{lemma}[Discrete sawtooth lemma]\label{lemma:DJK-sawtooth:new}
	Suppose that $\Omega\subset\re^{n+1}$, $n\ge 2$, is a \textbf{bounded} 1-sided NTA domain satisfying the CDC. Let $Q_0\in\mathbb{D}$  and let $\mathcal{F}=\{Q_i\}\subset\mathbb{D}_{Q_0}$ be a family of pairwise disjoint dyadic cubes. Given two real (non-necessarily symmetric) elliptic $L_0$, $L$, we write  $\omega_0^{Y_{Q_0}}=\omega_{L_0,\Omega}^{Y_{Q_0}}$,  $\omega_L^{Y_{Q_0}}=\omega_{L,\Omega}^{Y_{Q_0}}$ for the elliptic measures associated with $L_0$ and $L$ for the domain $\Omega$  with fixed pole at $Y_{Q_0}\in\Omega_{\mathcal{F},Q_0}\cap\Omega$ (cf.~Lemma \ref{prop:Pi-proj}). Let $\omega_{L,*}^{Y_{Q_0}}=\omega_{L,\Omega_{\mathcal{F},Q_0}}^{Y_{Q_0}}$ be the elliptic measure associated with $L$ for the domain $\Omega_{\mathcal{F},Q_0}$ with fixed pole at  $Y_{Q_0}\in\Omega_{\mathcal{F},Q_0}\cap\Omega$. Consider $\nu_L^{Y_{Q_0}}$ the measure defined by \eqref{eq:def-nu}. Then, there exists $\theta>0$ such that for all $Q\in\mathbb{D}_{Q_0}$ and all $F\subset Q$, we have
	\begin{equation}\label{ainfsawtooth:new}
	\bigg(\frac{\omega_L^{Y_{Q_0}}(F)}{\omega_L^{Y_{Q_0}}(Q)}\bigg)^\theta
	\lesssim
	\frac{\nu_L^{Y_{Q_0}}(F)}{\nu_L^{Y_{Q_0}}(Q)}
	\lesssim
	\frac{\omega_L^{Y_{Q_0}}(F)}{\omega_L^{Y_{Q_0}}(Q)}.
	\end{equation}
	In particular, if $F\subset Q\setminus\bigcup_{Q_i\in\mathcal{F}}Q_i$, we have
	\begin{equation}\label{ainfsawtooth:new:new}
	\bigg(\frac{\omega_L^{Y_{Q_0}}(F)}{\omega_L^{Y_{Q_0}}(Q)}\bigg)^\theta
	\lesssim
	\frac{\omega_{L,*}^{Y_{Q_0}}(F)}{\omega_{L,*}^{Y_{Q_0}}(\Delta_\star^Q)}
	\lesssim
	\frac{\omega_L^{Y_{Q_0}}(F)}{\omega_L^{Y_{Q_0}}(Q)},
	\end{equation}
	where $\Delta_\star^Q:=B(x_Q^\star,t_Q)\cap\partial\Omega_{\F,Q_0}$ with $t_Q\approx\ell(Q)$, $x_Q^\star\in \partial\Omega_{\F,Q_0}$, and $\dist(Q, \Delta_\star^Q)\lesssim \ell(Q)$ with implicit constants depending on the allowable parameters (cf.~\cite[Proposition~6.12]{HM1}).
\end{lemma}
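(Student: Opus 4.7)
The plan is to deduce Lemma \ref{lemma:DJK-sawtooth:new} from the preceding Lemma \ref{lemma:DJK-sawtooth} by specializing the auxiliary measure $\mu$. Concretely, I would take $\mu = \omega_L^{Y_{Q_0}}$ restricted to $Q_0$. The first step is to check that this choice is indeed a dyadically doubling measure on $Q_0$, which is not a hypothesis but follows from Lemma \ref{lemma:proppde} part (c) combined with the standard surface-ball sandwich $\Delta_Q \subset Q \subset \widetilde{\Delta}_Q$ from \eqref{deltaQ}: if $\widehat{Q}$ denotes the dyadic parent of $Q \in \dd_{Q_0}$, then $\omega_L^{Y_{Q_0}}(\widehat{Q}) \le \omega_L^{Y_{Q_0}}(\widetilde{\Delta}_{\widehat{Q}}) \lesssim \omega_L^{Y_{Q_0}}(\Delta_Q) \le \omega_L^{Y_{Q_0}}(Q)$ after iterating doubling a bounded number of times (and applying Harnack's inequality to shift the pole if needed).

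The second step is to verify that with this choice of $\mu$, the projection operator $\mathcal{P}_\F^{\mu}$ acts as the identity on both relevant measures. A direct computation from \eqref{defprojection} gives $\mathcal{P}_\F^{\mu} \omega_L^{Y_{Q_0}} = \omega_L^{Y_{Q_0}}$, since the coefficient $\mu(F \cap Q_i)/\mu(Q_i)$ multiplies $\omega_L^{Y_{Q_0}}(Q_i)$ and cancels exactly; similarly, $\mathcal{P}_\F^{\mu} \nu_L^{Y_{Q_0}} = \nu_L^{Y_{Q_0}}$, because the weight $\mu(F \cap Q_i)/\mu(Q_i)$ is precisely the coefficient appearing in the definition \eqref{eq:def-nu} of $\nu_L^{Y_{Q_0}}$. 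With these two identifications, the estimate \eqref{ainfsawtooth} from Lemma \ref{lemma:DJK-sawtooth} translates directly into \eqref{ainfsawtooth:new}.

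For the concluding inequality \eqref{ainfsawtooth:new:new}, I would split on whether $Q$ is contained in some $Q_i \in \F$ or not. In the first case, $Q \setminus \bigcup_{Q_i \in \F} Q_i$ is empty so the assertion is vacuous. Otherwise $Q \in \dd_{\F, Q_0}$, and the assumption $F \subset Q \setminus \bigcup_{Q_i \in \F} Q_i$ combined with \eqref{eq:def-nu} immediately gives $\nu_L^{Y_{Q_0}}(F) = \omega_{L,*}^{Y_{Q_0}}(F)$. The matching denominator comparability $\nu_L^{Y_{Q_0}}(Q) \approx \omega_{L,*}^{Y_{Q_0}}(\Delta_\star^Q)$ was essentially extracted inside the proof of Lemma \ref{lemma:DJK-sawtooth} at the chain \eqref{proj-aux-top}--\eqref{proj-aux-top:1} (recall that $\mathcal{P}_\F^{\mu}\nu_L^{Y_{Q_0}} = \nu_L^{Y_{Q_0}}$ with our choice of $\mu$). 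Inserting these into \eqref{ainfsawtooth:new} yields \eqref{ainfsawtooth:new:new}.

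Since the deep comparison work is carried out in Lemma \ref{lemma:DJK-sawtooth}, I do not anticipate any substantial obstacle here; the only mild point is ensuring the dyadic doubling of $\omega_L^{Y_{Q_0}}$ on $Q_0$, which validates the use of Lemma \ref{lemma:DJK-sawtooth} with this particular $\mu$, and reading off the denominator comparability $\nu_L^{Y_{Q_0}}(Q) \approx \omega_{L,*}^{Y_{Q_0}}(\Delta_\star^Q)$ from the intermediate estimates in the proof of the previous lemma.
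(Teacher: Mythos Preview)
Your proposal is correct and follows essentially the same approach as the paper: specialize $\mu=\omega_L^{Y_{Q_0}}$ so that $\mathcal{P}_\F^\mu$ acts as the identity on both $\omega_L^{Y_{Q_0}}$ and $\nu_L^{Y_{Q_0}}$, reducing \eqref{ainfsawtooth:new} to \eqref{ainfsawtooth}, and then read off \eqref{ainfsawtooth:new:new} from $\nu_L^{Y_{Q_0}}(F)=\omega_{L,*}^{Y_{Q_0}}(F)$ together with the denominator comparability \eqref{proj-aux-top}--\eqref{proj-aux-top:1}. Your explicit case split on whether $Q\subset Q_i$ and your justification of the dyadic doubling of $\omega_L^{Y_{Q_0}}$ are minor elaborations the paper leaves implicit.
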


\begin{proof}
	Letting $\mu=\omega_L^{Y_{Q_0}}$, which is dyadically doubling in $Q_0$, one easily has $\mathcal{P}_{\mathcal{F}}^{\mu}\omega_L^{Y_{Q_0}}=\omega_L^{Y_{Q_0}}$ and $\mathcal{P}_{\mathcal{F}}^{\mu}\nu_L^{Y_{Q_0}}=\nu_L^{Y_{Q_0}}$. Thus \eqref{ainfsawtooth} in Lemma \ref{lemma:DJK-sawtooth}  readily yields \eqref{ainfsawtooth:new}. Next, to obtain \eqref{ainfsawtooth:new:new} we may assume that $F$ is non-empty. Observe that if $F\subset Q\setminus\bigcup_{Q_i\in\mathcal{F}}Q_i$, then $\nu_L^{Y_{Q_0}}(F)=\omega_{L,*}^{Y_{Q_0}}(F)$. On the other hand, if  $F\subset Q\setminus\bigcup_{Q_i\in\mathcal{F}}Q_i$ we must be in \textbf{Case 2} of the proof of Lemma \ref{lemma:DJK-sawtooth}, hence \eqref{proj-aux-top} and \eqref{proj-aux-top:1} hold. With all these we readily obtain \eqref{ainfsawtooth:new:new}.
\end{proof}

Our last result in this section establishes that both $\nu_L^{Y_{Q_0}}$ and $\mathcal{P}_{\mathcal{F}}^{\mu}\nu_L^{Y_{Q_0}}$ are dyadically doubling on $Q_0$.

\begin{lemma}\label{lemmma:P-nu-doubling}
	Under the assumptions of Lemma \ref{lemma:DJK-sawtooth}, $\nu_L^{Y_{Q_0}}$ and $\mathcal{P}_{\mathcal{F}}^{\mu}\nu_L^{Y_{Q_0}}$ are dyadically doubling on $Q_0$.
\end{lemma}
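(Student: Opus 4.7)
The plan is to handle both measures simultaneously via a reduction. Observe that if we take $\mu=\omega_L^{Y_{Q_0}}$, the weights $\omega_L^{Y_{Q_0}}(F\cap Q_i)/\omega_L^{Y_{Q_0}}(Q_i)$ appearing in \eqref{eq:def-nu} match exactly the weights $\mu(F\cap Q_i)/\mu(Q_i)$ in \eqref{eq:def-nu:P}, so $\mathcal{P}_{\mathcal{F}}^{\omega_L^{Y_{Q_0}}}\nu_L^{Y_{Q_0}}=\nu_L^{Y_{Q_0}}$. Since $\omega_L^{Y_{Q_0}}$ is itself dyadically doubling on $Q_0$ (combine Lemma~\ref{lemma:proppde}(a),(c) with Harnack's inequality and the fact that $\delta(Y_{Q_0})\approx\ell(Q_0)$ guaranteed by Proposition~\ref{prop:Pi-proj}), it suffices to prove that $\mathcal{P}_{\mathcal{F}}^{\mu}\nu_L^{Y_{Q_0}}$ is dyadically doubling on $Q_0$ for an arbitrary dyadically doubling measure $\mu$, with the doubling constant depending on $C_\mu$ and the allowable parameters. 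I would then record the two key auxiliary facts: by Proposition~\ref{prop:CDC-inherit} the set $\Omega_{\mathcal{F},Q_0}$ is a 1-sided NTA domain satisfying the CDC, so $\omega_{L,*}^{Y_{Q_0}}$ enjoys surface-ball doubling on $\partial\Omega_{\mathcal{F},Q_0}$ via Lemma~\ref{lemma:proppde}(a),(c) whenever the pole lies outside a $4$-fold dilate of the ball; and, for $Q\in\mathbb{D}_{\mathcal{F},Q_0}$, the computation in the proof of Lemma~\ref{lemma:DJK-sawtooth} (see \eqref{proj-aux-top}, \eqref{proj-aux-top:1}) yields $\mathcal{P}_{\mathcal{F}}^{\mu}\nu_L^{Y_{Q_0}}(Q)\approx\omega_{L,*}^{Y_{Q_0}}(\Delta_\star^Q)$ using \eqref{eqn:L6-12-HM1}, \eqref{eqn:L6-12-HM1:new} and \eqref{eqn:overlap-Pj}.

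Next I would fix $Q\in\mathbb{D}_{Q_0}$ and a dyadic child $Q'\subset Q$, and split into cases. If $Q\subset Q_i$ for some $Q_i\in\mathcal{F}$, then also $Q'\subset Q_i$ and \eqref{eq:def-nu:P} collapses to
\[
\mathcal{P}_{\mathcal{F}}^{\mu}\nu_L^{Y_{Q_0}}(Q)=\frac{\mu(Q)}{\mu(Q_i)}\,\omega_{L,*}^{Y_{Q_0}}(P_i),\qquad\mathcal{P}_{\mathcal{F}}^{\mu}\nu_L^{Y_{Q_0}}(Q')=\frac{\mu(Q')}{\mu(Q_i)}\,\omega_{L,*}^{Y_{Q_0}}(P_i),
\]
and the ratio equals $\mu(Q)/\mu(Q')\lesssim 1$ by the dyadic doubling of $\mu$.

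The remaining possibility is $Q\in\mathbb{D}_{\mathcal{F},Q_0}$, for which I use the equivalence $\mathcal{P}_{\mathcal{F}}^{\mu}\nu_L^{Y_{Q_0}}(Q)\approx\omega_{L,*}^{Y_{Q_0}}(\Delta_\star^Q)$ noted above. If $Q'\in\mathbb{D}_{\mathcal{F},Q_0}$ as well, the same identity yields $\mathcal{P}_{\mathcal{F}}^{\mu}\nu_L^{Y_{Q_0}}(Q')\approx\omega_{L,*}^{Y_{Q_0}}(\Delta_\star^{Q'})$. Since $t_Q\approx\ell(Q)\approx\ell(Q')\approx t_{Q'}$ and both centers $x_Q^\star,x_{Q'}^\star$ lie within distance $\lesssim\ell(Q)$ of $Q$ by the construction in \cite[Proposition~6.12]{HM1}, the two surface balls sit inside a common surface ball of radius $C t_Q$ in $\partial\Omega_{\mathcal{F},Q_0}$, and doubling of $\omega_{L,*}^{Y_{Q_0}}$ gives the desired comparability. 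If instead $Q'=Q_j\in\mathcal{F}$, then $\mathcal{P}_{\mathcal{F}}^{\mu}\nu_L^{Y_{Q_0}}(Q')=\omega_{L,*}^{Y_{Q_0}}(P_j)$; by \eqref{props:Pj} and Lemma~\ref{lemma:proppde}(a) applied in $\Omega_{\mathcal{F},Q_0}$ we have $\omega_{L,*}^{Y_{Q_0}}(P_j)\approx\omega_{L,*}^{Y_{Q_0}}(\Delta_\star(x_j^\star,r_j))$ (already used in the proof of Lemma~\ref{lemma:DJK-sawtooth}), and the latter surface ball again embeds in a bounded dilate of $\Delta_\star^Q$ since $\dist(P_j,Q)\lesssim\ell(Q)$.

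The main technical nuisance I foresee is that Lemma~\ref{lemma:proppde}(c) requires the pole to lie outside the $4$-fold dilate of the surface ball in question, which is automatic when $\ell(Q)\ll\ell(Q_0)$ (because $\delta_*(Y_{Q_0})\approx\ell(Q_0)$) but may fail for the finitely many top-scale cubes with $\ell(Q)\approx\ell(Q_0)$. For those I would bypass surface-ball doubling entirely: since $Y_{Q_0}$ itself serves as a Corkscrew point for $\Omega_{\mathcal{F},Q_0}$ at scale $\ell(Q_0)$ (Proposition~\ref{prop:Pi-proj}) and $\ell(Q_0)/\ell(Q')$ is bounded in this range, a Harnack chain in $\Omega_{\mathcal{F},Q_0}$ from $Y_{Q_0}$ to a Corkscrew for $\Delta_\star^{Q'}$ (respectively $\Delta_\star(x_j^\star,r_j)$) has length controlled by the allowable parameters, so Lemma~\ref{lemma:proppde}(a) in $\Omega_{\mathcal{F},Q_0}$ combined with Harnack gives $\omega_{L,*}^{Y_{Q_0}}(\Delta_\star^{Q'})\gtrsim 1$, and this dominates the trivial bound $\omega_{L,*}^{Y_{Q_0}}(\Delta_\star^Q)\le 1$. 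Assembling the cases concludes the doubling for $\mathcal{P}_{\mathcal{F}}^{\mu}\nu_L^{Y_{Q_0}}$, and hence for $\nu_L^{Y_{Q_0}}$ by the initial reduction.
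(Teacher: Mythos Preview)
Your proposal is correct and follows essentially the same three-case split as the paper (the paper proves $\nu_L^{Y_{Q_0}}$ first and then notes that $\mathcal{P}_{\mathcal{F}}^{\mu}\nu_L^{Y_{Q_0}}$ follows by replacing $\omega_L^{Y_{Q_0}}$ with $\mu$ in Case~1, while you run the reduction in the opposite direction via $\nu_L^{Y_{Q_0}}=\mathcal{P}_{\mathcal{F}}^{\omega_L^{Y_{Q_0}}}\nu_L^{Y_{Q_0}}$). One minor slip: in your case $Q'=Q_j\in\mathcal{F}$ you need the containment $\Delta_\star^Q\subset C\,\Delta_\star(x_j^\star,r_j)$ rather than the one you wrote, so that doubling gives $\omega_{L,*}^{Y_{Q_0}}(\Delta_\star^Q)\lesssim\omega_{L,*}^{Y_{Q_0}}(P_j)$; this holds since $r_j\approx\ell(Q_j)\approx\ell(Q)\approx t_Q$ and the two centers lie within $\lesssim\ell(Q)$ of one another, exactly as in the paper's Case~2.
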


\begin{proof}
We follow the ideas in \cite[Lemma B.2]{HM1}. We shall first see $\nu_L^{Y_{Q_0}}$ is dyadically doubling. To this end, let $Q\in\mathbb{D}_{Q_0}$ be fixed and let $Q'$ be one of its dyadic children. We consider three cases:

\noindent \textbf{Case 1:} There exists $Q_i\in\mathcal{F}$ such that $Q\subset Q_i$. In this case we have 
\begin{align*}
\nu_L^{Y_{Q_0}}(Q)
=
\frac{\omega_L^{Y_{Q_0}}(Q)}{\omega_L^{Y_{Q_0}}(Q_i)}\omega_{L,*}^{Y_{Q_0}}(P_i)
\lesssim
\frac{\omega_L^{Y_{Q_0}}(Q')}{\omega_L^{Y_{Q_0}}(Q_i)}\omega_{L,*}^{Y_{Q_0}}(P_i)
=
\nu_L^{Y_{Q_0}}(Q')
\end{align*}
where we have used  Harnack's inequality and Lemma \ref{lemma:proppde} parts and $(a)$ and  $(c)$.

\medskip

\noindent \textbf{Case 2:} $Q'\in\mathcal{F}$. For simplicity say $Q'=Q_1\in\mathcal{F}$ and in this case $\nu_L^{Y_{Q_0}}(Q')=\omega_{L,*}^{Y_{Q_0}}(P_1)$. Note that then $Q\in\dd_{\F,Q_0}$ and we let $\mathcal{F}_1$ be the family of cubes $Q_i\in \F$ with $Q_i\cap Q\neq \emptyset$ and observe that if $Q_i\in\F_1$ then  $Q_i\subsetneq Q$. Then by \eqref{eqn:overlap-Pj}
\begin{multline}
\label{e:2}
\nu_L^{Y_{Q_0}}(Q)
=
\omega_{L,*}^{Y_{Q_0}}\Big(Q\setminus\bigcup_{Q_i\in\mathcal{F}}Q_i\Big)
+
\sum_{Q_i\in\mathcal{F}_1}\frac{\omega_L^{Y_{Q_0}}(Q\cap Q_i)}{\omega_L^{Y_{Q_0}}(Q_i)}\omega_{L,*}^{Y_{Q_0}}(P_i)
\\
=
\omega_{L,*}^{Y_{Q_0}}\Big(Q\setminus\bigcup_{Q_i\in\mathcal{F}}Q_i\Big)
+
\sum_{Q_i\in\mathcal{F}_1} \omega_{L,*}^{Y_{Q_0}}(P_i)
\lesssim
\omega_{L,*}^{Y_{Q_0}}\Big(
\Big(Q\setminus\bigcup_{Q_i\in\mathcal{F}}Q_i\Big)\bigcup \Big(\bigcup_{Q_i\in\mathcal{F}_1} P_i\Big)
\Big).
\end{multline}
Recall that in \textbf{Case 2} in the proof of Lemma \ref{lemma:DJK-sawtooth} we mentioned that $P_1\subset \Delta_\star(x_1^\star,r_1)$ with $x_1^\star$ being the center of $P_1$ and $r_1\approx \ell(P_1)\approx \ell(Q_1)\approx \ell(Q)$ since $Q$ is the dyadic parent of $Q_1$. Note that since $Q_i\in\F_1$ by \eqref{props:Pj}
\[
\ell(P_i)\approx \dist(P_i, Q)\approx \ell(Q_i)\lesssim \ell(Q)=2\ell(Q_1)\approx  \ell(P_1)\approx \dist(Q_1, P_1)\approx r_1. 
\]
Thus 
\[
\Big(Q\setminus\bigcup_{Q_i\in\mathcal{F}}Q_i\Big)\bigcup \Big(\bigcup_{Q_i\in\mathcal{F}_1} P_i\Big)\subset \Delta_{\star}(x_1^\star, Cr_1),
\]
where we here and below we use the notation $\Delta_\star$ for the surface balls with respect to $\partial\Omega_{\F,Q_0}$.
Using this, \eqref{e:2}, and Lemma \ref{lemma:proppde} parts $(a)$ and $(c)$ and Harnack's inequality we derive
\[
\nu_L^{Y_{Q_0}}(Q)
\lesssim
\omega_{L,*}^{Y_{Q_0}}(\Delta_{\star}(x_1^\star, Cr_1))
\lesssim
\omega_{L,*}^{Y_{Q_0}}(\Delta_\star(x_1^\star,r_1))
\lesssim \omega_{L,*}^{Y_{Q_0}}(P_1)
=
\nu_L^{Y_{Q_0}}(Q').
\]

\medskip

\noindent \textbf{Case 3:} None of the conditions in the previous cases happen, and necessarily  $Q,Q'\in\dd_{\F,Q_0}$. We take the same set $\mathcal{F}_1$ as in the previous case and again if $Q_i\in\F_1$ then  $Q_i\subsetneq Q$ (otherwise we are driven to \textbf{Case 1}). Introduce $\mathcal{F}_2$, the family of cubes $Q_i\in \mathcal{F}$ with $Q_i\cap Q'\neq\emptyset$. Again, if $Q_i\in\mathcal{F}_2$ we have $Q_i\subsetneq Q'$; otherwise either $Q'=Q_i$ which is \textbf{Case 2}, or $Q'\subsetneq Q_i$ which implies $Q\subset Q_i$ and we are back to \textbf{Case 1}.

Note that since $Q$ is the dyadic parent of $Q'$, using the same notation as in \eqref{eqn:L6-12-HM1} applied to $Q'\in \dd_{\F,Q_0}$ we have that
\[
\dist(x^\star_{Q'},Q)
\le
\dist(x^\star_{Q'},Q')
\lesssim 
\ell(Q')
\approx
\ell(Q)
\approx t_{Q'}.
\]
Also by \eqref{props:Pj}
\[
\dist(x^\star_{Q'},P_i)
\lesssim
\dist(x^\star_{Q'},Q)
+
\ell(Q)
+\dist(Q, P_i)
\lesssim
\ell(Q)
+
\dist(Q_i, P_i)
\lesssim
\ell(Q)
\approx
t_{Q'}.
\]
These readily give
\[
\Big(Q\setminus\bigcup_{Q_i\in\mathcal{F}}Q_i\Big)\bigcup \Big(\bigcup_{Q_i\in\mathcal{F}_1} P_i\Big)\subset \Delta_{\star}(x_{Q'}^\star, Ct_{Q'}).
\]
We can then proceed as in the previous case (see \eqref{e:2}) to obtain
\begin{align*}
\nu_L^{Y_{Q_0}}(Q)
\lesssim
\omega_{L,*}^{Y_{Q_0}}\Big(
\Big(Q\setminus\bigcup_{Q_i\in\mathcal{F}}Q_i\Big)\bigcup \Big(\bigcup_{Q_i\in\mathcal{F}_1} P_i\Big)
\Big)
\lesssim
\omega_{L,*}^{Y_{Q_0}}(\Delta_{\star}(x_{Q'}^\star, Ct_{Q'}))
\lesssim
\omega_{L,*}^{Y_{Q_0}}(\Delta_{\star}^{Q'})
\end{align*}
where $\Delta_{\star}^{Q'}=B(x_{Q'}^\star,t_{Q'})\cap\partial\Omega_{\F,Q_0}$ (see \eqref{eqn:L6-12-HM1}) and we have used 
Lemma \ref{lemma:proppde} parts $(a)$ and $(c)$ and Harnack's inequality. On the other hand, proceeding as in \eqref{proj-aux-top} with $Q'$ in place of $Q$ since $Q'\in\dd_{\F,Q_0}$:
\begin{multline*}
\omega_{L,*}^{Y_{Q_0}}(\Delta_\star^{Q'})
\le
\omega_{L,*}^{Y_{Q_0}}(Q'\cap E_0)
+
\sum_{Q_i\in\F_2} \omega_{L,*}^{Y_{Q_0}}(\Delta_\star(x_i^\star,r_i))
\\\lesssim
\omega_{L,*}^{Y_{Q_0}}(Q'\cap E_0)
+
\sum_{Q_i\in\F_2} \omega_{L,*}^{Y_{Q_0}}(P_i)
\\
=
\omega_{L,*}^{Y_{Q_0}}(Q'\cap E_0)
+
\sum_{Q_i\in\F_2} \frac{\omega_L^{Y_{Q_0}}(Q'\cap Q_i)}{\omega_L^{Y_{Q_0}}(Q_i)}\omega_{L,*}^{Y_{Q_0}}(P_i)
=
\nu_L^{Y_{Q_0}}(Q').
\end{multline*}
Eventually we obtain that $\nu_L^{Y_{Q_0}}(Q)\lesssim \nu_L^{Y_{Q_0}}(Q')$, completing the proof of the dyadic doubling property of $\nu_L^{Y_{Q_0}}$.

We next deal with $\mathcal{P}_{\mathcal{F}}^{\mu}\nu_L^{Y_{Q_0}}$. We can simply follow the previous argument replacing $\omega_L^{Y_{Q_0}}$ by $\mathcal{P}_{\mathcal{F}}^{\mu}\nu_L^{Y_{Q_0}}$ to see that in \textbf{Cases 2} and \textbf{3} we have that $\mathcal{P}_{\mathcal{F}}^{\mu}\nu_L^{Y_{Q_0}}(Q)=\nu_L^{Y_{Q_0}}(Q)$ and $\mathcal{P}_{\mathcal{F}}^{\mu}\nu_L^{Y_{Q_0}}(Q')=\nu_L^{Y_{Q_0}}(Q')$, hence the doubling condition follows from the previous calculations and the constant depend on that of $\omega_{L,*}^{Y_{Q_0}}$. With regard to \textbf{Cases 1},  on which $Q\subset Q_i$ for some $Q_i\in\F$, one can easily see that
\begin{align*}
\mathcal{P}_{\mathcal{F}}^{\mu} \nu_L^{Y_{Q_0}}(Q)
=
\frac{\mu(Q)}{\mu(Q_i)}\omega_{L,*}^{Y_{Q_0}}(P_i)
\lesssim
\frac{\mu(Q')}{\mu(Q_i)}\omega_{L,*}^{Y_{Q_0}}(P_i)
=
\mathcal{P}_{\mathcal{F}}^{\mu}\nu_L^{Y_{Q_0}}(Q'),
\end{align*}
which uses that $\mu$ is dyadically doubling in $Q_0$. Eventually we have seen that doubling constant depend on that of $\omega_{L,*}^{Y_{Q_0}}$ and $\mu$ as desired. This completes the proof. 
\end{proof}

\section{Proof of the main results}\label{section:SF-NT}

\subsection{Proof of Theorem \ref{THEOR:CME}}

By renormalization we may assume without loss of generality that $\|u\|_{L^\infty(\Omega)}=1$.	
We will first prove a dyadic version of \eqref{eq:CME}. Let $\dd=\dd(\pom)$ the dyadic grid from Lemma \ref{lemma:dyadiccubes} with $E=\pom$. Our goal is to show that 
\begin{equation}\label{eq:CME:dyadic}
M_0:=\sup_{Q^0\in\dd} \sup_{\substack{Q_0\in\dd_{Q^0}\\ \ell(Q_0)\le \frac{\ell(Q^0)}{M}}} \frac1{\omega_L^{X_{Q^0}}(Q_0)}\iint_{T_{Q_0}} |\nabla u(X)|^2\, G_L(X_{Q^0},X)\,dX
\lesssim 1
\end{equation}
with $M\ge 4$ large enough. Assuming this momentarily let us see how to derive \eqref{eq:CME}. Fix $B$ and $B'$ as in the suprema in \eqref{eq:CME}. Let $k, k'\in\ZZ$ be so that $2^{k-1}<r\le 2^k$ and  $2^{k'-1}<r'\le 2^{k'}$, and define $k'':=\min\{k', k-10k_M\}$ where $k_M\ge 1$ is large enough to be chosen depending on $M$ and the allowable parameters. Set 
\begin{equation*}
\W'
:=
\{I\in\W: I\cap B'\neq\emptyset,\ell(I)<2^{k''}\}
\bigcup
\{I\in\W: I\cap B'\neq\emptyset,\ell(I)\ge 2^{k''}\}
=:
\W'_1\cup\W'_2.
\end{equation*}
Note that for every $I\in \W$ with $I\cap B'\neq\emptyset$ we have
\[
\ell(I)<\diam(I)\le \frac{\dist(I,\pom)}{4}< \frac{r'}4\le 2^{k'-2}.
\]
As a consequence, if $\W_2'\neq\emptyset$, then $k''=k-10\,k_M$, and picking $I\in \W_2'\neq\emptyset$ one has
\[
r\approx 2^k\approx_M 2^{k''}\le  \ell(I)\le   2^{k'-2}\approx r'\lesssim r.
\]
This gives $r'\approx_M r$ and $\#\W_2'\lesssim_M 1$.

To proceed, let us write
\begin{multline*}
\iint_{B'\cap\Omega} |\nabla u(X)|^2\, G_L(X_\Delta,X)\,dX
\le
\iint_{\bigcup\limits_{I\in\W'_1} I} |\nabla u(X)|^2\, G_L(X_\Delta,X)\,dX
\\
+
\sum_{I\in\W'_2} \iint_{I} |\nabla u(X)|^2\, G_L(X_\Delta,X)\,dX
=:
\mathcal{I}+\mathcal{II},
\end{multline*}
and we estimate each term in turn. 

To estimate $\mathcal{II}$ we may assume that $\W'_2\neq\emptyset$, hence $k''=k-10\,k_M$, $r'\approx r$ and $\#\W_2'\lesssim 1$. Then Lemma \ref{lemma:proppde}, the fact that  $\omega(\pom)\le 1$, Caccioppoli's inequality, the normalization $\|u\|_{L^\infty(\Omega)}=1$, and Harnack's inequality give
\begin{multline*}
\mathcal{II}
=
\sum_{I\in\W'_2} \iint_{I} |\nabla u(X)|^2\, G_L(X_\Delta,X)\,dX
\lesssim
\sum_{I\in\W'_2} \ell(I)^{1-n}\iint_{I} |\nabla u(X)|^2\,dX
\\
\lesssim\# \W'_2
\lesssim
1
\approx
\omega_L^{X_{\Delta}}(\Delta').
\end{multline*}

Next we deal with $\mathcal{I}$. Introduce the disjoint family $\F'=\{Q\in\dd: \ell(Q)=2^{k''-1}, Q\cap 3\,B'\neq\emptyset\}$. 
 Given $I\in\W'_1$, let  $X_I\in B'\cap I$, and $Q_I\in\dd$  be so that $\ell(Q_I)=\ell(I)$ and  it contains some fixed $y_I\in\partial\Omega$ such that $\dist(I,\partial\Omega)=\dist(I,y_I)$. Then, as observed in Section \ref{subsection:sawtooth}, one has $I\in \W_{Q_I}^*$. Note that
\begin{equation*}
|y_I-x'|\le\dist(y_I, I)+\diam(I)+|X_I-x'|\le \frac54 \dist(I,\pom)+|X_I-x'|
\le \frac94|X_I-x'|<3\,r',
\end{equation*}
hence $y_I\in Q_I\cap 3\,\Delta'$. This and the fact that, as observed before, $\ell(Q_I)=\ell(I)<2^{k''}$  imply that $Q_I\subset Q$ for some $Q\in\F'$. Hence,
$I\subset (1+\lambda)\,I\subset U_{Q_I}\subset \overline{T_{Q}}$ for some $Q\in\F'$. This eventually show that $\bigcup_{I\in\W'_1} I\subset\bigcup_{Q\in\F'} T_Q$ and therefore
\[
\mathcal{I}
\le
\sum_{Q\in\F'}
\iint_{T_Q} |\nabla u(X)|^2\, G_L(X_\Delta,X)\,dX.
\]
For any $Q\in \F'$ pick the unique (ancestor) $\widehat{Q}\in \dd$ with $\ell(\widehat{Q})=2^{k-1}$ and $Q\subset \widehat{Q}$. Note that $\delta(X_\Delta)\approx r$, $\delta(X_{\widehat{Q}})\approx \ell(\widehat{Q})=2^{k-1}\approx r$. Also,
\begin{multline*}
|X_\Delta-X_{\widehat{Q}}|
\le
|X_\Delta-x|
+
|x-x'|+
|x'-x_{Q}|
+
|x_Q-x_{\widehat{Q}}|
+|x_{\widehat{Q}}-X_{\widehat{Q}}|
\\
<
3\,r+ 3\,r'+\diam(Q)+\diam(\widehat{Q})+\ell(\widehat{Q})
\lesssim
r+2^{k''}+2^k
\lesssim 
r.
\end{multline*}
Hence by the Harnack chain condition one obtains that $G_L(X_\Delta,X)\approx G_L(X_{\widehat{Q}},X)$ for every $X\in T_Q$ (in doing that we need to make sure that $k_M$ is large enough so that the Harnack chain joining $X_\Delta$ and $X_{\widehat{Q}}$, which is $c\,r$-away from $\pom$, does not get near $T_Q$, which is $\kappa_0\,\ell(Q)$-close to $\pom$).  Note also that $\frac{\ell(Q)}{\ell(\widehat{Q})}=2^{k''-k}\le 2^{-k_M}< M^{-1}$, provided $k_M$ is large enough depending on $M$. All theres and \eqref{eq:CME:dyadic} yield
\begin{multline*}
\mathcal{I}
\lesssim
\sum_{Q\in\F'}
\iint_{T_Q} |\nabla u(X)|^2\, G_L(X_{\widehat{Q}},X)\,dX
\lesssim
M_0\,\sum_{Q\in\F'} \omega_L^{X_{\widehat{Q}}}(Q)
\\
\lesssim
M_0\,
\sum_{Q\in\F'} \omega_L^{X_{\Delta}}(Q)
\le
M_0\,\omega_L^{X_{\Delta}}\Big( \bigcup_{Q\in\F'} Q  \Big )
\le
M_0\,\omega_L^{X_{\Delta}}(C\Delta')
	\lesssim
M_0\,\omega_L^{X_{\Delta}}(\Delta'),
\end{multline*}
where we have used Lemma \ref{lemma:proppde}. This completes the prof of the fact that \eqref{eq:CME:dyadic} implies \eqref{eq:CME}.

We next focus on showing \eqref{eq:CME:dyadic}. With this goal in mind we fix $Q^0\in\dd=\dd(\pom)$ and let $Q_0\in\dd_{Q^0}$ with $\ell(Q_0)\le \ell(Q^0)/M$ with $M$ large enough so that $X_{Q^0}\notin 4\, B_Q^*$ (cf. \eqref{definicionkappa12}). Write $\omega_L=\omega_L^{X_{Q^0}}$ and $\mathcal{G}_L=G_L(X_{Q^0},\cdot)$ and note that our choice of $M$,  \eqref{G-G-top}, and \eqref{eq:G-delta} guarantee that 
$L^\top \mathcal{G}_L= L^\top G_{L^\top}(\cdot,X_{Q^0}) =0$ in the weak sense in $4\,B_Q^*$.

Fix $N\gg 1$  and consider the family of pairwise disjoint cubes $\mathcal{F}_N=\{Q\in\dd_{Q_0}: \ell(Q)=2^{-N}\,\ell(Q_0)\}$ and let $\Omega_N=\Omega_{\F_N,Q_0}$ (cf. \eqref{defomegafq}). Note that by construction $\Omega_{N}\subset T_{Q_{0}}$ is an increasing sequence of sets  converging to  $T_{Q_0}$. 
Our goal is to show that for every $N\gg 1$ there holds
\begin{equation}\label{eq:CME-N-dya}
\iint_{\Omega_N} |\nabla u(X)|^2\, \mathcal{G}_L(X)\,dX\le M_0\, \omega_L(Q_0),
\end{equation}
with $M_0$ independent of $Q^0$, $Q_0$, and $N$. Hence the monotone convergence theorem yields
\[
\iint_{T_{Q_0}} |\nabla u(X)|^2\, \mathcal{G}_L(X)\,dX
=
\lim_{N\to\infty} \iint_{\Omega_N} |\nabla u(X)|^2\, \mathcal{G}_L(X)\,dX\le M_0\, \omega_L(Q_0),
\]
which is \eqref{eq:CME:dyadic}.

Let us next start estimating \eqref{eq:CME-N-dya}. Using $\Psi_N$ from Lemma \ref{lemma:approx-saw} and the ellipticity of the matrix $A$ we have
\begin{align*}
&\iint_{\Omega_N} |\nabla u(X)|^2\, \mathcal{G}_L(X)\,dX
\lesssim
\iint_{\ree} |\nabla u(X) |^2\, \mathcal{G}_L(X)\,\Psi_N(X)\,dX
\\
&\qquad\lesssim
\iint_{\ree} A(X)\nabla u(X)\cdot \nabla u(X)\,\mathcal{G}_L(X)\,\Psi_N(X)\,dX
\\
&
\qquad=
\iint_{\ree} A(X)\nabla u(X)\cdot \nabla (u \,\mathcal{G}_L\,\Psi_N)(X)\,dX
\\
&
\qquad\qquad
-
\frac12\iint_{\ree} A(X)\nabla (u^2\, \Psi_N)(X)\cdot \nabla \mathcal{G}_L(X)\,dX 
\\
&
\qquad\qquad-
\frac12\iint_{\ree} A(X)\nabla (u^2)(X)\cdot \nabla\Psi_N(X)\, \mathcal{G}_L(X)\,dX 
\\
&
\qquad\qquad+
\frac12\iint_{\ree} A(X)\nabla\Psi_N(X)\cdot \nabla\mathcal{G}_L(X)\,u(X)^2\,dX 
\\
&
\qquad=:
\mathcal{I}_1+\mathcal{I}_2+\mathcal{I}_3+\mathcal{I}_4.
\end{align*}
We observe that $u \,\mathcal{G}_L\,\Psi_N$ and $u^2\, \Psi_N$ belong to $W^{1,2}(\Omega)$ since $u\in W^{1,2}_{\rm loc} (\Omega)\cap L^\infty(\Omega)$, $\supp \Psi_N\subset \Omega_N^*$, $\delta(X)\gtrsim 2^{-N}\,\ell(Q_0)$ for every $X\in \Omega_N^*$, the properties of $G_L$, and the fact that $X_{Q^0}$ is away from $\Omega_N^*$ ---$\delta(X_{Q^0})\approx \ell(Q^0)$ and by \eqref{definicionkappa12} one has $\delta(X)\lesssim \ell(Q_0)\le \ell(Q^0)/M\le \delta(X_{Q^0})/2$ for every $X\in {\Omega}_N^*$ and provided $M$ is large enough. Using all these one can easily see via a limiting argument that the fact that $Lu=0$ in the weak sense in $\Omega$ implies that $\mathcal{I}_1=0$. Likewise, one can easily show that $\mathcal{I}_2=0$  by recalling that $\supp \Psi_N\subset \Omega_N^*\subset \frac12\,{B_Q}^*\cap\Omega$
(see \eqref{definicionkappa12}) and that as mentioned above $L^\top \mathcal{G}_L=0$ in the weak sense in $4\,B_Q^*$. Thus we are left with estimating the terms $\mathcal{I}_3$ and $\mathcal{I}_4$. By $(iii)$ in Lemma \ref{lemma:approx-saw} and the fact that  $\|u\|_{L^\infty(\Omega)}=1$ we obtain
\begin{align*}
|\mathcal{I}_3|+|\mathcal{I}_4|
&\lesssim
\iint_{\bigcup_{I\in\W_N^\Sigma} I^{**}} \big(|\nabla u|\,\mathcal{G}_L+ |\nabla \mathcal{G}_L|\big)\,\delta(\cdot)^{-1}\,dX
\\
&
\lesssim
\sum_{I\in\W_N^\Sigma} \ell(I)^{\frac{n-1}{2}}\,\Big(\Big(\iint_{I^{**}} |\nabla u|^2\,dX\Big)^\frac12\, \mathcal{G}_L(X(I))+ \Big(\iint_{I^{**}} |\nabla \mathcal{G}_L|^2\,dX\Big)^\frac12\Big)
\\
&
\lesssim
\sum_{I\in\W_N^\Sigma} \ell(I)^{\frac{n-3}{2}}\,\Big(\Big(\int_{I^{***}} |u|^2\,dX\Big)^\frac12 +\ell(I)^{\frac{n+1}{2}}\,\Big) \mathcal{G}_L(X(I))
\\
&
\lesssim
\sum_{I\in\W_N^\Sigma} \ell(I)^{n-1}\,\mathcal{G}_L(X(I)),
\end{align*}
where $X(I)$ denotes the center of $I$, and we have used Harnack's and Caccioppoli's inequalities, that $L^\top\mathcal{G}_L=0$ and $Lu=0$ in the weak sense in $I^{***}\subset \frac12\,B_Q^*\cap\Omega$ (see \eqref{definicionkappa12}).  Invoking Lemmas \ref{lemma:proppde} and Lemma \ref{lemma:approx-saw} one can see that  $\ell(I)^{n-1}\,\mathcal{G}_L(X(I))\lesssim \omega_L(\widehat{Q}_I)$ for every $I\in\W_N^\Sigma$. This together with Lemma \ref{lemma:approx-saw} allows us to conclude
\[
|\mathcal{I}_3|+|\mathcal{I}_4|
\lesssim
\sum_{I\in\W_N^\Sigma} \omega_L(\widehat{Q}_I)
\lesssim
\omega_L\Big(\bigcup_{I\in\W_N^\Sigma}\widehat{Q}_I\Big).
\]
Note that if $y\in \widehat{Q}_I$ with $I\in\W_N^\Sigma$ one has
\[
|y-x_{Q_0}|
\le
\diam(\widehat{Q}_I)
+
\dist(\widehat{Q}_I,I)+\diam(I)+\dist(I,x_{Q_0})
\lesssim
\ell(I)+\ell(Q_0)
\lesssim
\ell(Q_0)
\]
where we have used \eqref{new-QI} and \eqref{definicionkappa12}. Thus, Lemma \ref{lemma:proppde} gives 
\[
|\mathcal{I}_3|+|\mathcal{I}_4|
\lesssim 
\omega_L(C\,\Delta_{Q_0})
\lesssim
\omega_L(Q_0).
\]
This allows us to complete the proof of Theorem \ref{THEOR:CME}. \qed

\subsection{Proof of Theorem \ref{THEOR:S<N}}
We borrow some ideas from \cite{HMM-trans}. Given $k\in\N$ introduce the truncated localized conical square function: for every $Q\in\dd_{Q_0}$ and $x\in Q$, let 
\[
\mathcal{S}_{Q}^ku(x):=\bigg(\iint_{\Gamma_{Q}^k(x)}|\nabla u(Y)|^2\delta(Y)^{1-n}\,dY\bigg)^{\frac12}, \quad 
\text{where}\  \Gamma_{Q}^k(x) := \bigcup_{\substack{x\in Q'\in\mathbb{D}_{Q} \\ \ell(Q')\ge 2^{-k}\,\ell(Q_0)}}  U_{Q'},
\]
where if $\ell(Q)<2^{-k}\,\ell(Q_0)$ it is understood that $\Gamma_{Q}^k(x)=\emptyset$ and $\mathcal{S}_Q^ku(x)= 0$. Note that by the monotone convergence theorem $\mathcal{S}_{Q}^k u(x)\nearrow \mathcal{S}_{Q}u(x)$ as $k\to\infty$ for every $x\in Q$.

Fixed $k_0$ large enough (eventually, $k_0\to\infty$), our goal is to show that we can find $\vartheta>0$ (independent of $k_0$)  such that for every $\beta,\gamma, \lambda >0$ we have
\begin{multline}\label{good-lambda:goal}
\omega_L^{X_{Q_0}}\big(\big\{x\in Q_0: \mathcal{S}_{Q_0}^{k_0}u(x)>(1+\beta)\,\lambda,\ \mathcal{N}_{Q_0}u(x)\le \gamma\,\lambda \big\}\big)
\\
\lesssim
\Big(\frac{\gamma}{\beta}\Big)^\vartheta\, \omega_L^{X_{Q_0}}\big(\big\{x\in Q_0: \mathcal{S}_{Q_0}^{k_0}u(x)>\beta\,\lambda\big\}\big),
\end{multline}
where the implicit constant depend on the allowable parameters and it is independent of $k_0$.
To prove this we fix $\beta,\gamma, \lambda >0$ and set
\[
E_\lambda:=\big\{x\in Q_0: \mathcal{S}_{Q_0}^{k_0} u(x)>\lambda\big\}.
\]

Consider first the case $E_\lambda\subsetneq Q_0$. Note that if $x\in E_\lambda$, by definition $\mathcal{S}_{Q_0}^{k_0}u (x)>\lambda$. Let $Q_x\in\dd_{Q_0}$ be the unique dyadic cube such that $Q_x\ni x$ and $\ell(Q_x)=2^{-k_0}\ell(Q_0) $. Then it is clear from construction that for every $y\in Q_x$ one has 
\[
\Gamma_{Q_0}^{k_0}(x)=\bigcup_{Q_x\subset Q\subset Q_0} U_Q =\Gamma_{Q_0}^{k_0}(y)
\qquad\mbox{and}\qquad
\lambda< \mathcal{S}_{Q_0}^{k_0}u (x)= \mathcal{S}_{Q_0}^{k_0}u (y).
\]
Hence,  $Q_x\subset E_\lambda$ and we have shown that for every $x\in E_\lambda$ there exists $Q_x\in\dd_{Q_0}$ such that $Q_x\ni x$ and $Q_x\subset E_{\lambda}$. 
We then take the ancestors of $Q_x$,  and look for the one with maximal side length $Q_x^{\rm max}\supset Q_x$ which is contained in $E_\lambda$. That is, $Q\subset E_\lambda$ for every $Q_x\subset Q\subset Q_x^{\rm max}$
and $\widehat{Q}_x^{\rm max}\cap Q_0\setminus E_\lambda\neq\emptyset$ where $\widehat{Q}_x^{\rm max}$ is the dyadic parent of $Q_x^{\rm max}$ (during this proof we will use $\widehat{Q}$ to denote the dyadic parent of $Q$, that is, the only dyadic cube containing it with double side length). Note that the assumption $E_\lambda\subsetneq Q_0$ guarantees that $Q_x^{\rm max}\in\dd_{Q_0}\setminus\{Q_0\}$. Let $\F_0=\{Q_j\}_j$ be the collection of such maximal cubes as $x$ runs in $E_\lambda$ and we clearly have that the family is pairwise disjoint and also $E_\lambda=\bigcup_{Q_j\in\F_0} Q_j$. Also, by construction $\ell(Q_j)\ge 2^{-k_0}\ell(Q_0)$ and by the maximality of each $Q_j$ we can select $x_j\in \widehat{Q}_j\setminus E_\lambda$.

On the other hand, for any  $x\in Q_j$ we have, using that $x_j\in \widehat{Q}_j\setminus E_\lambda$,
\[
\Gamma_{Q_0}^{k_0}(x) 
= 
\bigcup_{\substack{x\in Q\in\mathbb{D}_{Q_0} \\ \ell(Q)\ge 2^{-k_0}\,\ell(Q_0)}} U_{Q}
=
\Gamma_{Q_j}^{k_0}(x) \bigcup \Big(\bigcup_{Q_j\subsetneq Q\subset Q_0}  U_{Q}\Big)
\subset 
\Gamma_{Q_j}^{k_0}(x) \bigcup \Gamma_{Q_0}^{k_0}(x_j)
\]
and therefore
\[
\mathcal{S}_{Q_0}^{k_0}u(x) \le \mathcal{S}_{Q_j}^{k_0}u(x)+\mathcal{S}_{Q_0}^{k_0} u(x_j) \le \mathcal{S}_{Q_j}^{k_0}u(x)+\lambda
.
\]
As a consequence, 
\[
\big\{x\in Q_j: \mathcal{S}_{Q_0}^{k_0}u(x)>(1+\beta)\,\lambda\big\}
\subset  \big\{x\in Q_j: \mathcal{S}_{Q_j}^{k_0}u(x)>\beta\lambda\big\}
\]
and
\begin{multline*}
\big\{x\in Q_0: \mathcal{S}_{Q_0}^{k_0}u(x)>(1+\beta)\,\lambda\big\}
=
\big\{x\in Q_0: \mathcal{S}_{Q_0}^{k_0}u(x)>(1+\beta)\,\lambda\big\}\cap E_\lambda
\\
=
\bigcup_{Q_j\in\F_0} \big\{x\in Q_j: \mathcal{S}_{Q_0}^{k_0}u(x)>(1+\beta)\,\lambda\big\}
\subset  
\bigcup_{Q_j\in\F_0}\big\{x\in Q_j: \mathcal{S}_{Q_j}^{k_0}u(x)>\beta\lambda\big\}.
\end{multline*}
This has been done under the assumption that $E_\lambda\subsetneq Q_0$. In the case $E_\lambda=Q_0$ we set $\F_0=\{Q_0\}$. Then in both cases we obtain 
\begin{align}\label{good-lambda:1}
\big\{x\in Q_0: \mathcal{S}_{Q_0}^{k_0}u(x)>(1+\beta)\,\lambda\big\}
\subset  
\bigcup_{Q_j\in\F_0}\big\{x\in Q_j: \mathcal{S}_{Q_j}^{k_0}u(x)>\beta\lambda\big\}.
\end{align}
Thus, to obtain \eqref{good-lambda:goal} it suffices to see that for every $Q_j\in\F_0$
\begin{align}\label{good-lambda:goal:local}
\omega_L^{X_{Q_0}}\big(\big\{x\in Q_j: \mathcal{S}_{Q_j}^{k_0}u(x)>\beta\,\lambda,\ \mathcal{N}_{Q_0}u(x)\le \gamma\,\lambda \big\}\big)
\lesssim
\Big(\frac{\gamma}{\beta}\Big)^\vartheta\, \omega_L^{X_{Q_0}}(Q_j).
\end{align}
From this we just need to sum in $Q_j\in\F_0$ to see that \eqref{good-lambda:1} together with the previous facts yield the desired estimate \eqref{good-lambda:goal}:
\begin{multline*}
\omega_L^{X_{Q_0}}\big(\big\{x\in Q_0: \mathcal{S}_{Q_0}^{k_0}u(x)>(1+\beta)\,\lambda,\ \mathcal{N}_{Q_0}u(x)\le \gamma\,\lambda \big\}\big)
\\
\le
\sum_{Q_j\in\F_0}\omega_L^{X_{Q_0}}\big(\big\{x\in Q_j: \mathcal{S}_{Q_j}^{k_0}u(x)>\beta\,\lambda,\ \mathcal{N}_{Q_0}u(x)\le \gamma\,\lambda \big\}\big)
\\
\lesssim
\Big(\frac{\gamma}{\beta}\Big)^\vartheta\, \sum_{Q_j\in\F_0}\omega_L^{X_{Q_0}}(Q_j)
=
\Big(\frac{\gamma}{\beta}\Big)^\vartheta\,
\omega_L^{X_{Q_0}}\Big( \bigcup_{Q_j\in\F_0} Q_j\Big)
=
\Big(\frac{\gamma}{\beta}\Big)^\vartheta\,\omega_L^{X_{Q_0}}(E_\lambda).
\end{multline*}

Let us then obtain \eqref{good-lambda:goal:local}. Fix $Q_j\in \F_0$ and to ease the notation write $P_0=Q_j$. Set
\begin{equation}\label{def:good-lambda-sets}
\widetilde{E}_\lambda=\big\{x\in P_0: \mathcal{S}_{P_0}^{k_0}u(x)> \beta \,\lambda \big\},
\qquad
F_\lambda=\big\{x\in P_0: \mathcal{N}_{Q_0}u(x)\le \gamma\,\lambda \big\}.
\end{equation}
If $\omega_L^{X_{Q_0}}(F_\lambda)=0$ then \eqref{good-lambda:goal:local} is trivial, hence we may assume that $\omega_L^{X_{Q_0}}(F_\lambda)>0$ so that $P_0\cap F_\lambda=F_\lambda\neq\emptyset$. We subdivide $P_0$ dyadically and stop the first time that $Q\cap F_\lambda=\emptyset$. If one never stops we write $\F_{P_0}^*=\{\emptyset\}$, otherwise $\F_{P_0}^*=\{P_j\}_j\subset\dd_{P_0}\setminus\{P_0\}$ is the family of stopping cubes which is maximal (hence pairwise disjoint) with respect to the property $F_\lambda\cap Q=\emptyset$. In particular, $F_\lambda\subset P_0\setminus(\cup_{\dd_{\F_{P_0}^*, P_0}} P_j)$.

Next we claim that
\begin{equation}\label{cones-sawtooth}
\bigcup_{x\in F_\lambda} \Gamma_{P_0}^{k_0}(x)
\subset
\bigcup_{\substack{Q\in\dd_{\F_{P_0}^*, P_0} \\ \ell(Q)\ge 2^{-k_0}\,\ell(Q_0)}} U_Q
\subset
{\rm int } \bigg(\bigcup_{Q\in\dd_{\F_{P_0}^*, P_0}}
 U_Q^*\bigg)
=
\Omega_{\F_{P_0}^*, P_0}^*
=:
\Omega_*
\end{equation}
To verify the first inclusion, we fix $Y\in \Gamma^{k_0}_{P_0}(x)$ with $x\in F_\lambda$. Then, $Y\in U_Q$  where $x\in Q\in\dd_{P_0}$. Since $x\in F_\lambda$ we must have $Q\in \dd_{\F_{P_0}^*}$ (otherwise $Q\subset P_j$ for some $P_j\in\F_{P_0}^*$ and this would imply that $x\in P_j\cap F_\lambda=\emptyset$) and therefore $Q\in \dd_{\F_{P_0}^*,P_0}$ which gives the first inclusion. The second inclusion in \eqref{cones-sawtooth} is trivial (since $U_Q\subset {\rm int}(U_Q^*)$). 

To continue we see that 
\begin{equation}\label{u-small}
|u(Y)|\le \gamma\,\lambda,\qquad \mbox{for all }Y\in\Omega_{*}.
\end{equation}
Fix such a $Y$ so that $Y\in U_Q^*$ for some $Q\in\dd_{\F_{P_0}^*,P_0}$. If $Q\cap F_\lambda=\emptyset$, by maximality of the cubes in $\F_{P_0}^*$, it follows that  $Q\subset P_j$ for some $P_j\in \F_{P_0}^*$, which contradicts the fact $Q\in\dd_{\F_{P_0}^*,P_0}$. Thus, $Q\cap F_\lambda\neq\emptyset$ and we can select $x\in Q\cap F_\lambda$ so that by definition
$|u(Y)|\le \mathcal{N}_{Q_0} u(x)\le\gamma\,\lambda$ since $Y\in U_Q^*\subset\Gamma^*_{Q_0}(x)$. 

Apply Lemma \ref{prop:Pi-proj} to find $X_*:=Y_{P_0}\in \Omega_*\cap\Omega$ so that 
\begin{align}\label{qwfafrwe}
\ell(P_0)\approx \dist(X_*,\pom_*)\approx \delta(X_*).
\end{align}
Let $\omega_L^*:=\omega_{L, \Omega_*}^{X_*}$ be the elliptic measure associated with $L$ relative to $\Omega_*$ with pole at $X_*$ and 
write $\delta_*=\dist(\cdot,\pom_*)$. Given $Y\in \Omega_*$, we choose $y_Y\in
\partial\Omega_*$ such that $|Y-y_Y|= \delta_*(Y)$. By definition, for $x\in  F_\lambda$ and $Y\in\Gamma_{P_0}(x)$,
there is a $Q\in\dd_{P_0}$ such that $Y\in U_Q$ and $x\in Q$. Thus, by the triangle inequality, and the definition of $U_Q$, we have that for $Y\in\Gamma_{P_0}(x)$,
\begin{equation}\label{qtfg3gq3g}
|x-y_Y| 
\leq 
|x-Y|+  
\delta_*(Y) 
\approx 
\delta(Y) +  \delta_*(Y) 
\approx
\delta_*(Y)
\end{equation}
where in the last step we have used that
\begin{equation}\label{delta-delta*-saw}
\delta(Y) \approx  \delta_*(Y) \qquad\text{for}\quad Y\in\bigcup_{Q\in\dd_{\F_{P_0}^*,P_0}} U_Q.
\end{equation}
On the other hand, as observed above $F_\lambda \subset P_0\setminus(\cup_\F Q_j)\subset\pom\cap\pom_*$, see  \cite[Proposition 6.1]{HM1}. Using this and the fact that if $Q\cap F_\lambda\neq\emptyset$ then $Q\in \dd_{\F_{P_0}^*,P_0}$ we have
\begin{align}\label{sigma1-2}
\int_{F_\lambda} \mathcal{S}_{P_0}^{k_0} u(x)^2\,d\omega_L^*(x)
&=
\int_{F_\lambda} \iint_{\Gamma_{P_0}^{k_0}(x)} |\nabla u(Y)|^2\,\delta(Y)^{1-n}\,dY\,d\omega_L^*(x)
\\ \nonumber
&\le
\int_{F_\lambda} 
 \sum_{\substack{x\in Q\in\dd_{P_0}\\  \ell(Q)\ge 2^{-k_0}\,\ell(Q_0)}}\iint_{U_Q} |\nabla u(Y)|^2\,\delta(Y)^{1-n}\,dY\,d\omega_L^*(x)
\\ \nonumber
&
\lesssim
\sum_{Q\in\dd_{\F_{P_0}^*,P_0}}
\Big(\iint_{U_Q} |\nabla u(Y)|^2\,dY\Big)\,\ell(Q)^{1-n}\,\omega_L^*(Q\cap F_\lambda)
\\ \nonumber
&
\lesssim
\sum_{\substack{Q\in\dd_{\F_{P_0}^*,P_0}\\ \ell(Q)\ge M^{-1}\ell(P_0)}} \dots
+
\sum_{\substack{Q\in\dd_{\F_{P_0}^*,P_0}\\ \ell(Q)<M^{-1}\ell(P_0)}} \dots
\\ \nonumber
&
=:
\Sigma_1+\Sigma_2,
\end{align}
where $M$ is a large constant to be chosen.

We start estimating $\Sigma_1$. Note first that $\#\{Q:\in\dd_{P_0}: \ell(Q)\ge M^{-1}\ell(P_0)\}\le C_M$, thus
\begin{align*}
\Sigma_1
&
\lesssim
\sum_{\substack{Q\in\dd_{\F_{P_0}^*,P_0}\\ \ell(Q)\ge M^{-1}\ell(P_0)}}\ell(Q)^{1-n}\sum_{I^\in \W_Q^*} \iint_{I^*} |\nabla u(Y)|^2\,dY
\\
&\lesssim
\sum_{\substack{Q\in\dd_{\F_{P_0}^*,P_0}\\ \ell(Q)\ge M^{-1}\ell(P_0)}}\ell(Q)^{1-n}\sum_{I\in \W_Q^*} \ell(I)^{-2}\iint_{I^{**}} |u(Y)|^2\,dY
\\
&\lesssim
(\gamma\,\lambda)^2
\sum_{\substack{Q\in\dd_{P_0}\\ \ell(Q)\ge M^{-1}\ell(P_0)}}\ell(Q)^{1-n}\sum_{I\in \W_Q^*} \ell(I)^{n-1}
\\
&\lesssim_M
(\gamma\,\lambda)^2,
\end{align*}
where we have used \eqref{u-small}, along with the fact that $\interior(I^{**})\subset \interior(U_Q^*)\subset \Omega_*$ for any $I\in \W_Q^*$ with $Q\in\dd_{\F_{P_0}^*, P_0}$, and the fact that $\W_Q^*$ has uniformly bounded cardinality. To estimate $\Sigma_2$ we note that picking $y_Q\in Q\cap F_\lambda$ we have that $Q\cap F_\lambda\subset B(y_Q, 2\,\diam(Q))\cap\pom_*=:\Delta_Q^*$. Write $X_Q^*$ for Corkscrew relative to $\Delta_Q^*$ with respect to $\Omega_*$ so that $\delta_*(X_Q^*)\approx\diam(Q)\lesssim M^{-1}\ell(P_0)$. Note that by \eqref{qwfafrwe}, we clearly have $X_*\in\Omega\setminus B(y_Q, 4\,\diam(Q))$  provided $M$ is sufficiently large. Hence, 
by Lemma \ref{lemma:proppde} part $(b)$ applied in $\Omega_*$, which is a 1-sided NTA domain satisfying the CDC by Proposition \ref{prop:CDC-inherit}, we obtain for every $Y\in U_Q$
\begin{equation}\label{Qrqf3fa}
\ell(Q)^{1-n}\, \omega_L^*(Q\cap F_\lambda)
\lesssim
\diam(Q)^{1-n}\omega_L^*(\Delta_Q^*)
\lesssim
G_{L,*}(X_*, X_Q^*)
\approx
G_{L,*}(X_*, Y),
\end{equation}
where $G_{L,*}$ is the Green function for the operator $L$ relative to the domain $\Omega_*$. Above the last estimate uses Harnack's inequality (we may need to tale $M$ slightly larger) and the fact that by \eqref{delta-delta*-saw}, one has 
$\delta_*(Y)\approx \ell(Q)\approx \diam(Q)\approx \delta_*(X_Q^*)$ (see Remark \ref{remark:diam-radius}) and that if $I\ni Y$ with $I\in \W_{Q}^*$
\[
|Y-X_*|
\le
\diam(I)+\dist(I,Q)+\diam(Q)+|y_Q-X^*|
\lesssim
\diam(Q).
\]
Write $\{P_0^i\}_i\subset \dd_{P_0}$ for the collection of dyadic cubes with $M\,\ell(P_0)\le \ell(P_0^i)<2\,M\ell(P_0)$ which has uniformly bounded  cardinality depending on $M$. Note that
\[
\{
Q\in\dd_{\F_{P_0}^*,P_0}: \ell(Q)<M^{-1}\ell(P_0)\}
\subset \bigcup_i \dd_{\F_{P_0}^*,P_0^i}.
\]
For each $i$, if $\dd_{\F_{P_0}^*,P_0^i}\neq\emptyset$ then $P_0^i\in\dd_{\F_{P_0}^*,P_0}$ and hence $P_0^i\cap F_\lambda\neq \emptyset$. Pick then $y_i\in P_0^i\cap F_\lambda$ and note that for every $Q\in \dd_{\F_{P_0}^*,P_0^i}$ by \eqref{definicionkappa12} it follows that
\[
U_Q\subset T_{P_0^i}\cap\Omega_*\subset B_{P_0^i}^*\cap\Omega_*\subset B(y_i, C\,\kappa_0\,\ell(P_0^i))\cap\Omega_*=: B_i\cap\Omega_*.
\]
Using then \eqref{Qrqf3fa} we have 
\begin{align*}
\Sigma_2
&\lesssim
\sum_{\substack{Q\in\dd_{\F_{P_0}^*,P_0}\\ \ell(Q)<M^{-1}\ell(P_0)}} \iint_{U_Q} |\nabla u(Y)|^2\,G_{L,*}(X_*, Y)\,dY
\\
&\lesssim 
\sum_i 
\sum_{\substack{Q\in\dd_{\F_{P_0}^*,P_0^i}\\ \ell(Q)<M^{-1}\ell(P_0)}} \iint_{U_Q} |\nabla u(Y)|^2\,G_{L,*}(X_*, Y)\,dY
\\
&\lesssim 
\sum_i \iint_{B_i\cap\Omega_*} |\nabla u(Y)|^2\,G_{L,*}(X_*, Y)\,dY
\\
&\lesssim
\|u\|_{L^\infty(\Omega_*)}^2\sum_i \omega_L^*(B_i\cap\pom_*)
\\
&\lesssim
(\gamma\,\lambda)^2,
\end{align*}
where we have invoked Theorem \ref{THEOR:CME} applied in $\Omega_*$, which is a 1-sided NTA domain satisfying the CDC by Proposition \ref{prop:CDC-inherit}, and we may need to take $M$ slightly larger and use Harnack's inequality;  \eqref{u-small}; and the fact that $\{P_0^i\}_i\subset \dd_{P_0}$ has uniformly bounded  cardinality. 

Using Chebyshev's inequality, \eqref{sigma1-2}, and collecting the estimates for $\Sigma_1$ and $\Sigma_2$ 
we conclude that 
\begin{align*}
\omega_L^*(\widetilde{E}_\lambda\cap F_\lambda)
\le 
\frac1{(\beta\,\lambda)^2}\,\int_{\widetilde{E}_\lambda\cap F_\lambda} (\mathcal{S}_{P_0}^{k_0} u)^2\,d\omega_L^*
\le
\frac1{(\beta\,\lambda)^2}\,
\int_{F_\lambda} \mathcal{S}_{P_0}^{k_0} u(x)^2\,d\omega_L^*(x)
\lesssim
\Big(\frac{\gamma}{\beta}\Big)^2.
\end{align*}
At this point we invoke Lemma \ref{lemma:DJK-sawtooth:new} in $P_0$ with $\F_{P_0}^*$ ---we warn the reader that $P_0$ and $\F_{P_0}^*=\{P_j\}_j$ play the role of $Q_0$ and $\{Q_j\}_j$ and that associated to each $P_j$ one finds $\widetilde{P}_j$ as in Proposition \ref{prop:Pi-proj}, which now plays the role of $P_j$ in that result, and $\mu=\omega_{L}^{X_*}$ (recall that $X_*=Y_{P_0}$) and observe that 
the fact that $F_\lambda\subset P_0\setminus(\cup_{\dd_{\F_{P_0}^*, P_0}} P_j)$ implies on account of \eqref{ainfsawtooth:new:new} that for some $\vartheta>0$ we have 
\begin{align*}
\frac{\omega_L^{X_*}(\widetilde{E}_\lambda\cap F_\lambda)}{\omega_L^{X_*}(P_0)}
\lesssim
\bigg(\frac{\omega_L^*(\widetilde{E}_\lambda\cap F_\lambda)}{\omega_L^*(\Delta_\star^{P_0})}\bigg)^{\frac{\vartheta}{2}}
\lesssim
\Big(\frac{\gamma}{\beta}\Big)^\vartheta, 
\end{align*}
where we have used that $\omega_L^*(\Delta_\star^{P_0})\approx 1$ since $\Delta_\star^{P_0}:=B(x_{P_0}^\star,t_{P_0})\cap\partial\Omega_*$ with $t_{P_0}\approx\ell({P_0})\approx \diam(\pom_*)$, $x_{P_0}^\star\in \partial\Omega_*$, \eqref{qwfafrwe}, Harnack's inequality, and Lemma \ref{lemma:proppde} part $(a)$. 
We can then use Remark \ref{remark:chop-dyadic}, Harnack's inequality, and \eqref{qwfafrwe}, to conclude that
\[
\frac{\omega_L^{X_{Q_0}}(\widetilde{E}_\lambda\cap F_\lambda)}{\omega_L^{X_{Q_0}}(P_0)}
\approx
\frac{\omega_L^{X_{P_0}}(\widetilde{E}_\lambda\cap F_\lambda)}{\omega_L^{X_{P_0}}(P_0)}
\approx
\frac{\omega_L^{X_*}(\widetilde{E}_\lambda\cap F_\lambda)}{\omega_L^{X_*}(P_0)}
\lesssim
\Big(\frac{\gamma}{\beta}\Big)^\vartheta.
\]
Recalling that $P_0=Q_j\in\F_0$, and the definitions of $\widetilde{E}_\lambda$ and $F_\lambda$ in \eqref{def:good-lambda-sets} the previous estimates readily lead to \eqref{good-lambda:goal:local}.

To conclude we need to see how \eqref{good-lambda:goal} yields \eqref{S<N}. With this goal in mind we first observe that 
for every $x\in Q_0$ and $Y\in \Gamma_{Q_0}^{k_0}(x)$ one has that $Y\in \overline{B_{Q_0}^*}\cap\Omega$ (see \eqref{definicionkappa12}) and also
$\delta(Y)\gtrsim 2^{-k_0}\,\ell(Q_0)$. Hence, since $u\in W^{1,2}_{\rm loc} (\Omega)$, one has
\begin{multline}\label{finite-truncate-Su}
\sup_{x\in Q_0}\mathcal{S}_{Q_0}^{k_0}u(x)
=
\sup_{x\in Q_0} \Big(
\iint_{\Gamma_{Q_0}^{k_0}(x)}|\nabla u(Y)|^2\delta(Y)^{1-n}\,dY\Big)^{\frac12}
\\
\lesssim
(2^{-k_0}\,\ell(Q_0))^{\frac{1-n}2}
\Big(
\iint_{B_{Q_0}^*\cap\{Y\in\Omega:\delta(Y)\gtrsim 2^{-k_0}\,\ell(Q_0)\}} |\nabla u(Y)|^2\,dY\Big)^{\frac12}
<\infty.
\end{multline}
On the other hand, given $0<q<\infty$, we can use \eqref{good-lambda:goal} 
\begin{align*}
&(1+\beta)^{-q}\,\|\mathcal{S}_{Q_0}^{k_0}u\|_{L^q(Q_0,\omega_L^{X_{Q_0}})}^{q}
\\
&\qquad
=
\int_0^\infty q\, \lambda^q\, \omega_L^{X_{Q_0}}\big(\big\{x\in Q_0: \mathcal{S}_{Q_0}^{k_0}u(x)>(1+\beta)\,\lambda\big\}\big)\,\frac{d\lambda}{\lambda}
\\
&\qquad
\le 
\int_0^\infty q\, \lambda^q\, \omega_L^{X_{Q_0}}\big(\big\{x\in Q_0: \mathcal{S}_{Q_0}^{k_0}u(x)>(1+\beta)\,\lambda,\ \mathcal{N}_{Q_0}u(x)\le \gamma\,\lambda \big\}\big)\,\frac{d\lambda}{\lambda}
\\
&\qquad\qquad\qquad+
\int_0^\infty q\, \lambda^q\, \omega_L^{X_{Q_0}}\big(\big\{x\in Q_0: \mathcal{N}_{Q_0}u(x)> \gamma\,\lambda \big\}\big)\,\frac{d\lambda}{\lambda}
\\
&\qquad
\lesssim
\Big(\frac{\gamma}{\beta}\Big)^\vartheta\,
\int_0^\infty q\, \lambda^q\, \omega_L^{X_{Q_0}}\big(\big\{x\in Q_0: \mathcal{S}_{Q_0}^{k_0}u(x)>\beta\,\lambda\big\}\big)\,\frac{d\lambda}{\lambda}
\\
&\qquad\qquad\qquad+
\gamma^{-q}\,\|\mathcal{N}_{Q_0}u\|_{L^q(Q_0,\omega_L^{X_{Q_0}})}^{q}
\\
&\qquad
\lesssim
\Big(\frac{\gamma}{\beta}\Big)^\vartheta\,\beta^{-q}\,\|\mathcal{S}_{Q_0}^{k_0}u\|_{L^q(Q_0,\omega_L^{X_{Q_0}})}^{q}+
\gamma^{-q}\,\|\mathcal{N}_{Q_0}u\|_{L^q(Q_0,\omega_L^{X_{Q_0}})}^{q}.
\end{align*}
We can then choose $\gamma$ small enough so that we can hide the first term in the right hand side of the last quantity (which is finite by \eqref{finite-truncate-Su}) and eventually conclude that
\[
\|\mathcal{S}_{Q_0}^{k_0}u\|_{L^q(Q_0,\omega_L^{X_{Q_0}})}^{q}
\lesssim
\|\mathcal{N}_{Q_0}u\|_{L^q(Q_0,\omega_L^{X_{Q_0}})}^{q}.
\]
Since the implicit constant does not depend on $k_0$ and $\mathcal{S}_{Q_0}^k u(x)\nearrow \mathcal{S}_{Q_0}u(x)$ as $k\to\infty$ for every $x\in Q$, the monotone convergence theorem yields at once \eqref{S<N} and the proof Theorem \ref{THEOR:S<N} is complete.

\bibliographystyle{plain}

\bibliography{myref}

\end{document}